\newcommand{\vertiii}[1]{{\left\vert\kern-0.25ex\left\vert\kern-0.25ex\left\vert #1
		\right\vert\kern-0.25ex\right\vert\kern-0.25ex\right\vert}}
\def\XXint#1#2#3{{\setbox0=\hbox{$#1{#2#3}{\int}$ }
		\vcenter{\hbox{$#2#3$ }}\kern-.6\wd0}}
\theoremstyle{plain}
\newtheorem{Thm}{Theorem}[section]
\newtheorem{Prop}[Thm]{Proposition}
\newtheorem{Lem}[Thm]{Lemma}
\newtheorem{Cor}[Thm]{Corollary}
\newtheorem{Qn}{Question}
\newtheorem{Conj}[Thm]{Conjecture}
\newtheorem*{namedthm}{\namedthmname}
\newcounter{namedthm}
\newenvironment{subproof}[1][\proofname]{%
	\begin{proof}[#1]%
	}{%
	\end{proof}%
}
\theoremstyle{definition}
\newtheorem{Def}[Thm]{Definition}
\newtheorem{Not}[Thm]{Notation}
\newtheorem{Rmk}[Thm]{Remark}
\newtheorem{Conv}[Thm]{Convention}
\newcommand{\Z}{\mathbb{Z}}
\renewcommand{\epsilon}{\varepsilon}
\author{Aidan Young}
\address{Ben-Gurion University of the Negev, Israel}
\title{Adversarial ergodic optimization}
\email{\url{youngaid@post.bgu.ac.il}}
\keywords{Ergodic optimization, symbolic dynamics, shift of finite type, specification, error-correcting code}
\subjclass[2020]{37B02, 37B10, 37D35}
\begin{document}

\maketitle

\begin{abstract}
In this article, we introduce an ergodic optimization problem inspired by information theory, which can be presented informally as follows: given a factor map $\pi : (X, T) \to (Y, S)$ of topological dynamical systems, and a continuous function $f \in C(X)$, what can be said about the extrema
$$\sup_{y \in Y} \inf_{x \in \pi^{-1} \{y\}} \lim_{k \to \infty} \frac{1}{k} \sum_{j = 0}^{k - 1} f \left( T^j x \right) ? $$
\end{abstract}

Ergodic optimization is concerned with the following (informal) question: given a topological dynamical system $(Y, S)$, and a continuous function $f \in C(Y)$, what can be said about $\sup_{y \in Y} \lim_{k \to \infty} \frac{1}{k} \sum_{j = 0}^{k - 1} f \left( S^j y \right)$? This problem as stated is ill-posed, since the limit in question will generally not exist for all $y \in Y$, but there are several equivalent ways to formalize this concept. In particular, the optimization of these ergodic averages is equivalent to the optimization of the integrals $\int f \mathrm{d} \nu$ over $S$-invariant Borel probability measures $\nu$ on $Y$. For the sake of brevity, we will call this the ``classical" theory of ergodic optimization. See \cite{JenkinsonSurvey} for a broad survey of this subject.

Here we introduce a generalization of ergodic optimization which we term ``adversarial ergodic optimization." We begin with a mostly informal summary of what adversarial ergodic optimization intends to do, then later give rigorous definitions that attempt to formalize this idea. The adversarial ergodic optimization can be summarized informally as the study of the quantity
\begin{align*}
	\addtocounter{equation}{1}\tag{\theequation}\label{Informal adversarial optimization} \sup_{y \in Y} \inf_{z \in \pi^{-1}\{y\}} \lim_{k \to \infty} \frac{1}{k} \sum_{j = 0}^{k - 1} f \left( U^j z \right) ,
\end{align*}
where $\pi : (Z, U) \to (Y, S)$ is a factor map of topological dynamical systems, and $f \in C(Z)$. It is sometimes useful to think about this adversarial ergodic optimization in terms of a game between two players, whom we name Alice and Bob. First, Alice picks a point $y \in Y$. Afterwards, Bob picks a point $z \in Z$ in the fiber of the point Alice has chosen, i.e. $z \in \pi^{-1} \{y\}$. This then determines a Birkhoff average, which -leaving aside the obvious issue of convergence- yields a limit $\lim_{k \to \infty} \frac{1}{k} \sum_{j = 0}^{k - 1} f \left( U^j z \right)$. Bob then pays Alice $\lim_{k \to \infty} \frac{1}{k} \sum_{j = 0}^{k - 1} f \left( U^j z \right)$ pesos. The expression (\ref{Informal adversarial optimization}) describes the optimal outcomes of this game. More generally, we understand this limit of Birkhoff averages in (\ref{Informal adversarial optimization}) as a placeholder for some context-appropriate quantity that quantifies the long-time dynamics of the function $f$ at the orbit $z$.

Adversarial ergodic optimization is interested in the ways in which the dynamics of $(Z, U)$ are ``constrained" by their image in $(Y, S)$ under $\pi$. In cases where $\pi$ is an isomorphism of topological dynamical systems, i.e. where $\pi$ preserves all information, this adversarial ergodic optimization reduces to the classical ergodic optimization. We adopt the name ``adversarial ergodic optimization" to emphasize the way that maximization and minimization interact with and antagonize each other.

The idea of an adversarial ergodic optimization has analogies in nature. Consider dynamical systems $(X, T), (Y, S)$, where $(X, T)$ describes the possible behaviors of a hare over time, and $(Y, S)$ describes possible behaviors of a fox, and consider the second coordinate projection $\pi : (X \times Y, T \times S) \to (Y, S)$. Imagine that a function $f : X \times Y \to \mathbb{R}$ describes the amount of ``access" the fox has to the hare as a source of food. The fox would like to maximize this quantity over time, since consistent access to its prey means a steady supply of food, while the hare would like to minimize this quantity. We could see this as a kind of adversarial ergodic optimization. Phenomena in this spirit, where the population dynamics of predators and prey affect each other, have been observed with cicadas. For instance, in \cite{CicadasWebb}, G.F. Webb proposed that the life cycles of certain cicada populations could be understood as a response to the long-term population dynamics of their predators, optimizing for the survival of the cicadas.

Adversarial ergodic optimization also arises in the symbolic-dynamical setting of \cite[Definition 5]{IteratedOptimization}, where the authors consider a quantity of the form
$$
\liminf_{k \to \infty} \max_{y \in Y} \min_{x \in X} \frac{1}{k} \sum_{j = 0}^{k - 1} f \left( T^j x , S^j y \right) .
$$
In their setting, both $(X, T)$ and $(Y, S)$ are shifts of finite type over a finite alphabet $\mathcal{A}$, and $f \in C(X \times Y)$ is the function $f(x, y) = 1 - \delta(x(0), y(0))$, where $\delta(\cdot, \cdot)$ here is the Kronecker delta $\chi_{=}(\cdot, \cdot)$. This value, which they call the ``covering radius" of the pair $(X, Y)$, measures (in an asymptotic sense) how far a word in $Y$ is from being a word in $X$, in the sense of how much a word must be ``perturbed" relative to its length. This parameter is relevant to describing the performance of an error-correction method they call ``quantized-constraint concatenation." While a fuller discussion of this error-correction scheme is beyond the scope of this article, this covering radius quantity resembles a functional of interest in the classical ergodic optimization theory that goes by various names, and when either of $(X, T), (Y, S)$ is a trivial topological dynamical system, this quantity reduces to that functional.

This adversarial ergodic optimization described informally by (\ref{Informal adversarial optimization}) can be formalized in several different ways. We focus here on four candidates for such a formalization, each of which generalizes one perspective on the classical ergodic optimization.

\begin{Not}
Given a factor map $\pi : (Z, U) \to (Y, S)$ of topological dynamical systems, as well as a continuous function $f \in C(Z)$, we consider the following functionals $C(Z) \to \mathbb{R}$:
\begin{align*}
	\alpha(f)	& = \sup_{\nu \in \mathcal{M}_S(Y)} \; \min_{\lambda \in \mathcal{M}_{U}(Z) \cap \pi_*^{-1} \{\nu\}} \int f \mathrm{d} \lambda , &
	\beta(f)	& = \sup_{ y \in \pi^{-1} R_f} \; \inf_{z \in R_f \cap \pi^{-1} \{y\}} \; \lim_{k \to \infty} \frac{1}{k} \sum_{j = 0}^{k - 1} f\left( U^j z \right) , \\
	\gamma(f)	& = \sup_{y \in Y} \; \inf_{z \in \pi^{-1} \{y\}} \; \limsup_{k \to \infty} \frac{1}{k} \sum_{j = 0}^{k - 1} f\left( U^j z \right) , &
	\delta(f)	& = \limsup_{k \to \infty} \; \max_{y \in Y} \; \min_{z \in \pi^{-1} \{y\}} \frac{1}{k} \sum_{j = 0}^{k - 1} f\left( U^j z \right) ,
\end{align*}
where $R_f : = \left\{ z \in Z : \textrm{$\lim_{k \to \infty} \frac{1}{k} \sum_{j = 0}^{k - 1} f\left( U^j z \right)$ exists} \right\}$, and $\mathcal{M}_U (Z)$ denotes the space of $U$-invariant Borel probability measures on $Z$ (c.f. Definition \ref{Def of TDS}).
\end{Not}

These quantities $\alpha, \beta, \gamma, \delta$ correspond to the optimization of integrals relative to invariant measures, optimization of ergodic averages over ``regular" orbits, optimization of limits superior of ergodic averages over all orbits, and ergodic averages of partial orbits of large length, respectively. In the context of classical ergodic optimization, these functionals are all equivalent, as can be seen in \cite[Section 2]{Jenkinson}. This equivalence can be seen as an expression of the heuristic in ergodic theory that pointwise ergodic averages are closely related to the invariant probability measures on a system (c.f. G.D. Birkhoff's pointwise ergodic theorem \cite{BirkhoffErgodicTheorem}).

The classical ergodic optimization can be recovered as a special case of our adversarial ergodic optimization in one of two ways. On one hand, we can consider the case where $(Z, U) = (Y, S)$, and $\pi : (Z, U) \to (Y, S)$ is an isomorphism of topological dynamical systems. On the other hand, we can consider the trivial factor map $\pi : (Z, U) \to (Y, S)$, i.e. the case where $(Y, S) = \left( \left\{ * \right\} , \operatorname{id}_{ \{*\} } \right)$ is the trivial topological dynamical system. In the former case, the adversarial ergodic optimization becomes a maximization over $(Y, S)$, and in the latter case, the adversarial ergodic optimization becomes a minimization over $(Z, U)$.

Our overarching goal in this article is to identify aspects of the classical ergodic optimization which generalize to the adversarial setting, as well as how the adversarial ergodic optimization can behave differently from the classical. In particular, we keep the following three questions in mind. Each question is followed by a discussion of current results surrounding them.

\begin{Qn}\label{Question: existence of maximizers}
Given a factor map of topological dynamical systems $\pi : (Z, U) \to (Y, S)$, and a continuous function $f \in C(Z)$, write $\psi_f(\nu) : = \min_{\lambda \in \mathcal{M}_U(Z) \cap \pi^{-1} \{\nu\}} \int f \mathrm{d} \lambda$. Under what conditions does there exist $\nu \in \mathcal{M}_S(Y)$ such that $\psi_f(\nu) = \alpha(f)$? Further, when such a maximizing measure exists, under what condition does there exist an ergodic maximizing measure?
\end{Qn}

Corollary \ref{Few measures on Y} and Theorem \ref{Theta map} provide sufficient conditions for the existence of an ergodic maximizing measure, though at present, we have no examples of a function that does not admit an ergodic maximizing measure. One could elaborate on Question \ref{Question: existence of maximizers} by asking when there exists a maximizing measure $\nu \in \mathcal{M}_S(Y)$ which satisfies some given property. A well-studied question along these lines in the classical ergodic optimization literature is when we can choose $\nu$ to be supported on a periodic orbit. This relates to the so-called ``Typically Periodic Optimization conjecture" (TPO conjecture) from the classical theory of ergodic optimization, a sort of meta-conjecture concerning conditions under which a ``typical" continuous function of sufficient regularity on a sufficiently hyperbolic topological dynamical system admits a maximizing measure with finite support. Our Proposition \ref{Periodic optimization} and Conjecture \ref{Periodic optimization conjecture} touch on the existence of maximizing measures with finite support. We refer the reader to \cite[Section 7]{JenkinsonSurvey} for a broad survey of the TPO conjecture and the literature surrounding it.

\begin{Qn}\label{Question: relations between functionals}
What relations between $\alpha(f), \beta(f), \gamma(f), \delta(f)$ hold for all factor maps of topological dynamical systems $\pi : (Z, U) \to (Y, S)$ and continuous functions $f \in C(Z)$?
\end{Qn}

At present, the only general inequality we know is that $\alpha(f) \leq \beta(f)$ (c.f. Theorem \ref{alpha leq beta}). While all four are known to be equivalent in the classical setting (c.f. \cite[Proposition 2.1]{Jenkinson}), Proposition \ref{Alpha delta counterexample} shows that some of these identities can fail. On the other hand, in the product-type case (i.e. $(Z, U) = (X, T) \times (Y, S), \pi : (x, y) \mapsto y$), Theorems \ref{Identity for alpha and delta under periodic specification} and \ref{Delta gamma inequality} show that if the underlying systems satisfy certain hyperbolicity conditions, then $\alpha = \gamma = \delta$.

\begin{Qn}\label{Question: delta limit existence}
Given a factor map of topological dynamical systems $\pi : (Z, U) \to (Y, S)$, and a continuous function $f \in C(Z)$, under what conditions does the limit $\lim_{k \to \infty} \max_{y \in Y} \min_{z \in \pi^{-1} \{y\}} \frac{1}{k} \sum_{j = 0}^{k - 1} f \left( U^j z \right)$ exist?
\end{Qn}

It's known that the limit exists in the classical setting as a consequence of Fekete's lemma. We show in Proposition \ref{Delta limit exists for spec property} that the limit exists when $(X, T)$ has the specification property, and \cite[Proposition 7]{IteratedOptimization} provides other sufficient conditions for this limit to exist. At present, we have no example where the limit does not exist.

For the purposes of this article, we will mostly focus on the product-type case, where $\pi : (X \times Y, T \times S) \to (Y, S)$ is the projection of a product system onto the second coordinate. Section \ref{General theory section} is the only section not to focus on this product case. For reasons we will elaborate more on later in this article, a fuller study of this more general setting of factors will be the subject of a future work.

This article is organized into sections as follows. For each section, we include a brief description of the main results.

In Section \ref{General theory section}, we establish that $\alpha \leq \beta$ holds in general, without any further assumptions on the underlying topological dynamical systems. This is the only inequality of its kind in this article. We also provide some sufficient conditions for the existence of an ``optimal" measure exists for $\alpha$, i.e. a measure which attains the supremum. This section also sets some of the notation and conventions that we will use throughout this article, and includes a few elementary lemmas which will be used throughout the article. This is the only section that focuses on the case where $\pi : (Z, U) \to (Y, S)$ is a general factor map of topological dynamical systems, as later sections focus on the special case of ``product-type dynamics," where $(Z, U) = (X, T) \times (Y, S)$ for some topological dynamical systems $(X, T), (Y, S)$, and $\pi : (x, y) \mapsto y$ is the projection onto the second coordinate. This section also includes an example showing that in general, the functionals $\alpha, \beta, \gamma, \delta$ need not be equal, demonstrating that some kind of additional restrictions on the underling systems are needed for these quantities to be equivalent. Our main results in this section are:
\begin{itemize}
	\item Theorem \ref{alpha leq beta}: $\alpha(f) \leq \beta(f)$ for all systems.
	\item Proposition \ref{Alpha delta counterexample}: Unlike in the classical setting, $\alpha(f) = \beta(f) = \gamma(f) = \delta(f)$ does not hold in general.
\end{itemize}

In Section \ref{Specification section}, we establish some properties of and relations between the functionals $\alpha, \beta, \gamma, \delta$ under the assumption that the underlying systems satisfy certain hyperbolicity conditions, namely specification properties. In particular, we show that $\alpha = \gamma = \delta$ under these conditions. Our main results of this section are as follows:
\begin{itemize}
	\item Proposition \ref{Delta limit exists for spec property}: If $(X, T)$ has the specification property, then the limit superior defining $\delta(f)$ can be replaced with a limit.
	\item Theorem \ref{Identity for alpha and delta under periodic specification}: If $(X, T), (Y, S)$ have the periodic specification property, then $\alpha(f) = \delta(f)$.
	\item Theorem \ref{Delta gamma inequality}: If $(X, T), (Y, S)$ have the specification property, then $\gamma(f) = \delta(f)$.
\end{itemize}

In Section \ref{Locally constant secction}, we take up a question raised in Section \ref{General theory section} concerning the existence of invariant measures which are in some sense optimal in the context of adversarial ergodic optimization. We study this question specifically in the context of transitive shifts of finite type. We find that when the function in question is locally constant, the optimizing measures can be characterized in terms of certain subshifts made of generalized ground-state configurations. Consequently, questions about these optimizing measures are equivalent to questions about the subshifts associated to these functions. We consider this setting to be of particular importance, since our study of adversarial ergodic optimization is inspired by earlier work on the optimization of locally constant functions on shifts of finite type in \cite{IteratedOptimization}. Our main results in this section are as follows:
\begin{itemize}
	\item Propositions \ref{Measures supported on ground states are optimizing}, \ref{Optimizing measures supported on ground states}: Consider shifts of finite type $(X, T), (Y, S)$, and suppose that $f \in C(X \times Y)$ is locally constant. Then there exists a shift $Y_{f, 0} \subseteq Y$ with the property that a measure $\nu_0 \in \mathcal{M}_S(Y)$ is supported on $Y_{f, 0}$ if and only if $\nu_0$ is ``$\alpha$-maximizing" in the sense that
	$\alpha(f) = \min_{\lambda \in \mathcal{M}_{T \times S} (X \times Y) \cap (\pi_Y)_*^{-1} \{\nu_0\}} \int f \mathrm{d} \lambda$ (c.f. Definition \ref{Notation for psi}).
	\item Theorem \ref{Theta map}: If $(X, T), (Y, S)$ are shifts, and $(X, T)$ is a transitive shift of finite type, then for every $f \in C(X \times Y)$ exists an $\alpha$-maximizing $\nu_0 \in \mathcal{M}_S(Y)$. In particular, the shift $Y_{f, 0}$ described above is nonempty.
	\item Corollary \ref{Subordination principle}: A subordination principle in the sense of T. Bousch (c.f. \cite{BouschWalters}) holds for $\alpha$-maximizing measures for locally constant functions on a product of transitive shifts of finite type, where any measure whose support is contained in the support of an $\alpha$-maximizing measure is itself $\alpha$-maximizing.
\end{itemize}

We thank Tom Meyerovitch for his substantial contributions to this work. These were communicated to us via many helpful discussions during the period in which this work was carried out. We have also made a number of changes to this paper in light of an anonymous referee's very helpful comments. This research was supported by Israel Science Foundation grant no. 985/23.

\section{Defining the adversarial ergodic optimization}\label{General theory section}

\begin{Conv}
Throughout this article, all scalar-valued functions are assumed to take values in $\mathbb{R}$, unless otherwise specified. In particular, for a compact metrizable space $X$, the notation $C(X)$ will always refer to the real Banach space of continuous functions $X \to \mathbb{R}$ equipped with the uniform norm.
\end{Conv}

\begin{Def}\label{Def of TDS}
A \emph{topological dynamical system} is a pair $(X, T)$ consisting of a compact metrizable space $X$ and a homeomorphism $T : X \to X$. Given a topological dynamical system $(X, T)$, we define $\mathcal{M}_T(X)$ to be the space of all $T$-invariant Borel probability measures on $X$ endowed with the weak* topology.
\end{Def}

\begin{Not}\label{Def of average notation}
Given a topological dynamical system $(Z, U)$, a continuous function $f \in C(Z)$, a Borel measure $\lambda$ on $Z$, and a nonempty finite set $J \subseteq \mathbb{Z}$, we write
\begin{align*}
\mathbb{A}_J f	& : = \frac{1}{|J|} \sum_{j \in J} f \circ U^j ,	& \mathbb{S}_J f	& : = \sum_{j \in J} f \circ U^j \\
\mathbb{A}_J \lambda	& : = \frac{1}{|J|} \sum_{j \in J} U_*^j \lambda,	& \mathbb{S}_J \lambda	& : = \sum_{j \in J} U_*^j \lambda .
\end{align*}
In particular, for $k \in \mathbb{N}$, we set $\mathbb{A}_k : = \mathbb{A}_{[0, k - 1]}, \mathbb{S}_k : = \mathbb{S}_{[0, k - 1]}$. While we will sometimes use the $\mathbb{A}, \mathbb{S}$ notations with respect to underlying topological dynamical systems not called $(Z, U)$, we will only every use this notation in contexts where it is clear what the underlying system is.
\end{Not}

We define our topological dynamical systems to be invertible, but most results do not require this assumption, and would hold just as well if we only assumed that the transformation was continuous and surjective. Our decision to consider only invertible topological dynamical systems serves primarily to streamline certain definitions and arguments. We also prefer to speak in terms of metrizable spaces rather than metric spaces, because we will sometimes want the freedom to choose a metric for a space that is convenient for a specific purpose.

For the remainder of this section, let $\pi : (Z, U) \to (Y, S)$ be a factor map of topological dynamical systems, i.e. a surjective map $\pi : Z \to Y$ such that $S \circ \pi = \pi \circ U$, where $(Y, S), (Z, U)$ are topological dynamical systems.

The functionals $\alpha, \beta, \gamma, \delta$ are based on the functionals of the same name presented in \cite[Section 2]{Jenkinson}, and when we take $\pi$ to be trivial, i.e. $(Y, S) = (Z, U), \pi = \operatorname{id}_Z$, our functionals here are equivalent those of the same name in \cite{Jenkinson}. All four functionals describe in some sense the kind of extreme behaviors our adversarial ergodic optimization is interested in. Notably, in the classical setting, we have the following for all $f \in C(Z)$:
\begin{enumerate}
	\item The supremum in our definition of $\alpha(f)$ is in fact a maximum,
	\item the limit superior in our definition of $\delta(f)$ is in fact a limit, and
	\item $\alpha(f) = \beta(f) = \gamma(f) = \delta(f)$.
\end{enumerate}
In the classical setting, claims (1) and (3) can be found in \cite{Jenkinson}. Claim (2) is a direct consequence of the sequence $k \mapsto \max_{y \in Y} \sum_{j = 0}^{k - 1} f \left( S^j y \right) $ being subadditive, meaning that Fekete's lemma \cite{Fekete} applies. In light of claim (3), some authors refer to the value $\alpha(f) = \beta(f) = \gamma(f) = \delta(f)$ by names such as the ``ergodic maximizing value," as in \cite{GaribaldiErgodicOptimization}.

In this section and Section \ref{Specification section}, we will attempt to generalize these aforementioned facts about $\alpha, \beta, \gamma, \delta$ to our adversarial optimization setting, as well as note how they fail to generalize.

\begin{Rmk}
Our definition of $\delta(f)$ might remind the reader of the subadditive ergodic optimization as considered in (to name only one example) \cite[Appendix A]{MorrisMather}, where one considers a subadditive sequence of (upper semi-)continuous functions $(\varphi_k)_{k = 1}^\infty$ on a compact metrizable space $Y$, and looks at $\sup_{y \in Y} \lim_{k \to \infty} \varphi_k(y) / k$. In our adversarial ergodic optimization, the sequence $\left( y \mapsto \min_{z \in \pi^{-1} \{y\}} \sum_{j = 0}^{k - 1} f \left( U^j z \right) \right)_{k = 1}^\infty$ is a \emph{super}additive sequence of continuous functions, so one might imagine that the theory of subadditive (or equivalently, superadditive) ergodic optimization has something to say here. There is an important difference, however, between adversarial ergodic optimization and sub/superadditive ergodic optimization. In the ``classical" ergodic optimization, the choice of whether to maximize or minimize the ergodic averages of a continuous function is arbitrary. However, the minimization of the average $\varphi_k / k$ for a superadditive sequence $(\varphi_k)_{k = 1}^\infty$, which the ``subadditive ergodic optimization" is built around, is meaningfully different from the maximization of that same sequence, which our adversarial ergodic optimization is interested in.
\end{Rmk}

\begin{Def}\label{Notation for psi}
Consider a factor map $\pi : (Z, U) \to (Y, S)$ of topological dynamical systems. For $f \in C(Z) , \nu \in \mathcal{M}_S(Y)$, set
	$$\psi_f(\nu) : = \min \left\{ \int f \mathrm{d} \lambda : \lambda \in \mathcal{M}_{U}(Z), \; \pi_* \lambda = \nu \right\} . $$
	When $\psi_f$ attains a supremum at $\nu_0 \in \mathcal{M}_S(Y)$, we call $\nu_0$ an \emph{$\alpha$-maximizing measure} for $f$.

Given a convex set $\mathcal{K}$, we denote by $\partial_e \mathcal{K}$ the set of extremal points of $\mathcal{K}$, i.e. the set of all points $\lambda \in \mathcal{K}$ for which if $\kappa_1, \kappa_2 \in \mathcal{K}$, and $\frac{\kappa_1 + \kappa_2}{2} = \lambda$, then $\kappa_1 = \kappa_2 = \lambda$.
\end{Def}

\begin{Lem}\label{Ergodic points in preimage of marginal}
Let $\pi : (Z, U) \to (Y, S)$ be a factor map of topological dynamical systems, and let $\nu \in \partial_e \mathcal{M}_S(Y)$ be ergodic. Set $\mathcal{K} = \left\{ \lambda \in \mathcal{M}_{U}(Z) : \pi_* \lambda = \nu \right\}$. Then $\mathcal{K}$ is compact and convex in the weak* topology, and moreover the extreme points of $\mathcal{K}$ are exactly the ergodic elements of $\mathcal{K}$.
\end{Lem}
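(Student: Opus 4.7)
My plan is to handle the three claims (convexity, compactness, and the characterization of extreme points) in order, with the last being the only one that actually uses the ergodicity hypothesis on $\nu$.

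For convexity and compactness of $\mathcal{K}$: convexity is immediate from linearity of $\pi_*$, since a convex combination of measures with marginal $\nu$ still has marginal $\nu$ and remains $U$-invariant. For compactness, I would note that $\mathcal{M}_U(Z)$ is weak*-compact (as a closed subset of the weak*-compact space $\mathcal{M}(Z)$), and that $\pi_* : \mathcal{M}_U(Z) \to \mathcal{M}_S(Y)$ is weak*-continuous because $\int g \, \mathrm{d}(\pi_* \lambda) = \int (g \circ \pi) \, \mathrm{d} \lambda$ for every $g \in C(Y)$. Hence $\mathcal{K} = \pi_*^{-1}\{\nu\}$ is closed in $\mathcal{M}_U(Z)$, and thus compact.

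For the extreme-point characterization, one direction is essentially automatic. If $\lambda \in \mathcal{K}$ is ergodic, then $\lambda$ is extreme in the larger set $\mathcal{M}_U(Z)$, and since $\mathcal{K} \subseteq \mathcal{M}_U(Z)$, any representation $\lambda = \tfrac{1}{2}(\kappa_1 + \kappa_2)$ with $\kappa_1, \kappa_2 \in \mathcal{K}$ forces $\kappa_1 = \kappa_2 = \lambda$. So $\lambda$ is extreme in $\mathcal{K}$.

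The content lies in the converse, and here is where the ergodicity of $\nu$ gets used. I would argue the contrapositive: suppose $\lambda \in \mathcal{K}$ is not ergodic, so there exist distinct $\mu_1, \mu_2 \in \mathcal{M}_U(Z)$ with $\lambda = \tfrac{1}{2}(\mu_1 + \mu_2)$. Pushing forward by $\pi$, I obtain $\nu = \pi_* \lambda = \tfrac{1}{2}(\pi_* \mu_1 + \pi_* \mu_2)$, with $\pi_* \mu_1, \pi_* \mu_2 \in \mathcal{M}_S(Y)$. Because $\nu$ is extreme in $\mathcal{M}_S(Y)$, both summands must equal $\nu$, so $\mu_1, \mu_2 \in \mathcal{K}$. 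This exhibits $\lambda$ as a nontrivial convex combination of elements of $\mathcal{K}$, so $\lambda$ is not extreme in $\mathcal{K}$. I do not expect a significant obstacle: the only conceptual ingredient beyond basic weak*-topology facts is the observation that a factor map of an extreme (ergodic) measure through a barycentric decomposition pins down the decomposition's marginal components, which is immediate from the definition of ``extreme.''
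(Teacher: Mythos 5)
Your proposal is correct and follows essentially the same route as the paper: compactness and convexity from the continuity and affineness of $\pi_*$, ergodic $\Rightarrow$ extreme by passing to the larger simplex $\mathcal{M}_U(Z)$, and the contrapositive of the converse by pushing a nontrivial decomposition forward and invoking extremality of $\nu$. No gaps.
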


\begin{proof}
That $\mathcal{K}$ is compact and convex is immediate from the fact that $\pi_* : \mathcal{M}_{U}(Z) \to \mathcal{M}_S(Y)$ is continuous and affine. Now we argue that the extreme points of $\mathcal{K}$ are exactly the ergodic elements of $\mathcal{K}$, i.e. $\partial_e \mathcal{K} = \mathcal{K} \cap \partial_e \mathcal{M}_U(Z)$, which we prove by a standard double containment argument.
\begin{subproof}[Proof of $\partial_e \mathcal{K} \subseteq \mathcal{K} \cap \partial_e \mathcal{M}_U(Z)$]
We prove the contrapositive. Suppose that $\lambda \in \mathcal{K}$ is \emph{not} ergodic. Then there exist $\kappa, \theta \in \mathcal{M}_{U}(Z), \kappa \neq \theta$ such that $\lambda = \frac{\kappa + \theta}{2}$. Thus $\pi_* \lambda = \frac{\pi_* \kappa + \pi_* \theta}{2}$. Since $\nu \in \partial_e \mathcal{M}_S(Y)$ is ergodic, this implies that $\pi_* \kappa = \pi_* \theta = \nu$, meaning that $\kappa, \theta \in \mathcal{K}$. Therefore $\lambda = \frac{\kappa + \theta}{2}$ is not an extremal point of $\mathcal{K}$.
\end{subproof}
\begin{subproof}[Proof of $\partial_e \mathcal{K} \supseteq \mathcal{K} \cap \partial_e \mathcal{M}_U(Z)$]
If $\lambda \in \mathcal{K}$ is ergodic, then it must be extremal in $\mathcal{K}$, since if $\lambda$ was a nontrivial convex combination of points in $\mathcal{K}$, then a fortiori, $\lambda$ would also be a nontrivial convex combination of points in $\mathcal{M}_{U}(Z)$, contradicting the ergodicity of $\lambda$.
\end{subproof}
\end{proof}

\begin{Lem}\label{Lower semi-continuity}
The functional $\psi_f (\cdot) : \mathcal{M}_S(Y) \to \mathbb{R}$ is lower semi-continuous and convex for fixed $f \in C({Z})$.
\end{Lem}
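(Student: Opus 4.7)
The plan is to prove convexity by a direct convex combination argument and lower semi-continuity by a weak* compactness/subsequence argument. A preliminary point to keep in mind is that the ``$\min$'' in the definition of $\psi_f$ is legitimate: for each $\nu \in \mathcal{M}_S(Y)$ the set $\mathcal{K}_\nu := \mathcal{M}_U(Z) \cap \pi_*^{-1}\{\nu\}$ is nonempty (a standard fact about factor maps of topological dynamical systems, e.g. obtained by pushing forward an invariant lift of $\nu$ to the orbit closure) and weak* compact (as the preimage of the point $\nu$ under the continuous affine map $\pi_* : \mathcal{M}_U(Z) \to \mathcal{M}_S(Y)$ intersected with the compact set $\mathcal{M}_U(Z)$), and the functional $\lambda \mapsto \int f \, \mathrm{d}\lambda$ is weak* continuous, so the infimum is attained.

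For convexity, I would proceed as follows. Fix $\nu_1, \nu_2 \in \mathcal{M}_S(Y)$ and $t \in [0,1]$, and let $\lambda_i \in \mathcal{K}_{\nu_i}$ realize $\psi_f(\nu_i) = \int f \, \mathrm{d}\lambda_i$ for $i = 1, 2$. The measure $\lambda := t\lambda_1 + (1-t)\lambda_2$ is in $\mathcal{M}_U(Z)$ (since $\mathcal{M}_U(Z)$ is convex), and $\pi_* \lambda = t\nu_1 + (1-t)\nu_2$ by linearity of $\pi_*$. Hence $\lambda$ is admissible for the minimization defining $\psi_f(t\nu_1 + (1-t)\nu_2)$, which yields
\[
\psi_f(t\nu_1 + (1-t)\nu_2) \leq \int f \, \mathrm{d}\lambda = t \psi_f(\nu_1) + (1-t)\psi_f(\nu_2).
\]

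For lower semi-continuity, suppose $(\nu_n)_{n=1}^\infty$ is a sequence in $\mathcal{M}_S(Y)$ with $\nu_n \to \nu$ in the weak* topology, and set $L := \liminf_{n \to \infty} \psi_f(\nu_n)$. The goal is $L \geq \psi_f(\nu)$. Pass to a subsequence along which $\psi_f(\nu_n) \to L$, choose minimizers $\lambda_n \in \mathcal{K}_{\nu_n}$ with $\psi_f(\nu_n) = \int f \, \mathrm{d}\lambda_n$, and then use weak* compactness of $\mathcal{M}_U(Z)$ to extract a further subsequence with $\lambda_n \to \lambda$ for some $\lambda \in \mathcal{M}_U(Z)$. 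Continuity of $\pi_*$ gives $\pi_* \lambda = \lim \pi_* \lambda_n = \lim \nu_n = \nu$, so $\lambda \in \mathcal{K}_\nu$, and continuity of $\lambda' \mapsto \int f \, \mathrm{d}\lambda'$ gives $\int f \, \mathrm{d}\lambda = \lim \int f \, \mathrm{d}\lambda_n = L$. Hence $\psi_f(\nu) \leq \int f \, \mathrm{d}\lambda = L$, as required.

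There is no real obstacle here; the proof is routine once one has noted that minimizers exist. The only point that requires a moment of care is making sure the selection of minimizers $\lambda_n$ is legitimate (which is justified by the nonemptiness and compactness of each $\mathcal{K}_{\nu_n}$ together with weak* continuity of $\int f \, \mathrm{d}(\cdot)$), and that one reduces to a subsequence realizing the $\liminf$ before invoking weak* compactness to extract a convergent sub-subsequence of $(\lambda_n)$.
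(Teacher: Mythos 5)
Your proof is correct and follows essentially the same route as the paper's: convexity via the convex combination of minimizers, and lower semi-continuity by passing to a subsequence realizing the $\liminf$ and extracting a weak* convergent sub-subsequence of the minimizing lifts. Your preliminary justification that the minimum is attained (nonemptiness and compactness of each fiber $\mathcal{K}_\nu$) is a point the paper leaves implicit, but otherwise the arguments coincide.
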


\begin{proof}
Fix $\nu \in \mathcal{M}_S(Y)$, and let $(\nu_k)_{k = 1}^\infty$ be a sequence in $\mathcal{M}_S(Y)$ converging to $\nu$. For each $k \in \mathbb{N}$, choose $\lambda_k \in \pi_*^{-1} \{\nu_k\}$ such that $\int f \mathrm{d} \lambda_k = \psi_f (\nu_k)$.
	
In order to prove that $\liminf_{k \to \infty} \psi_f(\nu_k) \geq \psi_f(\nu)$, it suffices to prove that for any $k_1 < k_2 < \cdots$ such that the sequence $\left( \psi_f \left( \nu_{k_\ell} \right) \right)_{\ell = 1}^\infty$ converges, we have $\lim_{\ell \to \infty} \psi_f \left( \nu_{k_\ell} \right) \geq \psi_f(\nu)$, so suppose we have such a sequence $(k_\ell)_{\ell = 1}^\infty$. By passing to a further subsequence if necessary, we can assume that $\left( \lambda_{k_\ell} \right)_{\ell = 1}^\infty$ converges to a limit, say $\lim_{\ell \to \infty} \lambda_{k_\ell} = \theta$. Due to the continuity of $(\pi_Y)_*$, we can infer that $\pi_* \theta = \nu$. Therefore
$$ \psi_f(\nu) \leq \int f \mathrm{d} \theta = \lim_{\ell \to \infty} \int f \mathrm{d} \lambda_{k_\ell} = \lim_{\ell \to \infty} \psi_f \left( \nu_{k_\ell} \right) . $$
Therefore $\psi_f$ is lower semi-continuous.
	
Next, we show that $\psi_f$ is convex. Let $\omega_1, \omega_2 \in \mathcal{M}_S(Y); t \in [0, 1]$, and choose $\kappa_i \in \pi_*^{-1} \{\nu_i\}$ such that $\psi_f(\omega_i) = \int f \mathrm{d} \kappa_i$ for $i = 1, 2$. Then
$$\psi_f(t \omega_1 + (1 - t) \omega_2) \leq \int f \mathrm{d} (t \kappa_1 + (1 - t) \kappa_2) = t \psi_f(\omega_1) + (1 - t) \psi_f(\omega_2) .$$
Thus $\psi_f$ is convex.
\end{proof}

We can use this $\psi_f$ notation to write the definition of $\alpha$ more concisely as
$$\alpha(f) = \sup_{\nu \in \mathcal{M}_S(Y)} \psi_f (\nu) ,$$
but as the following lemma demonstrates, this supremum can be further restricted to ergodic measures.

\begin{Lem}\label{Sup on ergodic measures}
Consider a factor map $\pi : (Z, U) \to (Y, S)$ of topological dynamical systems, and a continuous function $f \in C({Z})$. Then
\begin{align*}
	\alpha(f) = \sup_{\nu \in \partial_e \mathcal{M}_S(Y)} \psi_f(\nu) = \sup_{\nu \in \partial_e \mathcal{M}_S(Y)} \left( \inf_{\lambda \in \left( \partial_e \mathcal{M}_{U}(Z) \right) \cap \pi_*^{-1} \{\nu\}} \left( \int f \mathrm{d} \lambda \right) \right) .
\end{align*}
\end{Lem}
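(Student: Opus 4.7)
The plan is to handle the two equalities in turn, using Lemmas \ref{Ergodic points in preimage of marginal} and \ref{Lower semi-continuity} together with some standard Choquet-theoretic machinery.

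For the first equality $\alpha(f) = \sup_{\nu \in \partial_e \mathcal{M}_S(Y)} \psi_f(\nu)$, one inclusion is immediate since $\partial_e \mathcal{M}_S(Y) \subseteq \mathcal{M}_S(Y)$. For the reverse inequality, I would use ergodic decomposition: an arbitrary $\nu \in \mathcal{M}_S(Y)$ is the barycenter of a probability measure $\rho$ supported on $\partial_e \mathcal{M}_S(Y)$. Lemma \ref{Lower semi-continuity} says that $\psi_f$ is convex and lower semi-continuous, and it is obviously bounded (since $|\psi_f(\cdot)|\leq \|f\|_\infty$), so the Jensen inequality for barycenters of probability measures on a compact convex subset of a locally convex space yields
$$\psi_f(\nu) \;\leq\; \int_{\partial_e \mathcal{M}_S(Y)} \psi_f(\eta)\, d\rho(\eta) \;\leq\; \sup_{\eta \in \partial_e \mathcal{M}_S(Y)} \psi_f(\eta) .$$
Taking a supremum over $\nu \in \mathcal{M}_S(Y)$ gives the desired inequality.

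For the second equality, fix an ergodic $\nu \in \partial_e \mathcal{M}_S(Y)$ and let $\mathcal{K} = \mathcal{M}_U(Z) \cap \pi_*^{-1}\{\nu\}$. By Lemma \ref{Ergodic points in preimage of marginal}, $\mathcal{K}$ is compact convex and $\partial_e \mathcal{K} = \mathcal{K} \cap \partial_e \mathcal{M}_U(Z)$. The map $\lambda \mapsto \int f \, d\lambda$ is affine and weak*-continuous on $\mathcal{K}$, so by Bauer's minimum principle (equivalently, by a direct Krein--Milman argument applied to the nonempty compact convex set of minimizers), it attains its minimum on $\mathcal{K}$ at some extreme point $\lambda_0 \in \partial_e \mathcal{K}$, which is therefore ergodic. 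Thus
$$\psi_f(\nu) \;=\; \int f\, d\lambda_0 \;\geq\; \inf_{\lambda \in \partial_e \mathcal{M}_U(Z)\cap \pi_*^{-1}\{\nu\}} \int f\, d\lambda \;\geq\; \psi_f(\nu),$$
the last inequality holding because infimum over a subset is at least the minimum over the whole set. This gives the required identity.

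There is no really hard step here, but the main care point is the invocation of Jensen's inequality in the first paragraph: one must confirm that the ergodic decomposition of $\nu$ really does produce a probability measure on $\mathcal{M}_S(Y)$ whose barycenter is $\nu$, and that $\psi_f$ is sufficiently well-behaved (bounded and lower semi-continuous, hence Borel measurable) for the integral $\int \psi_f\, d\rho$ to be defined and for Jensen's inequality to apply in its barycentric form. Lemma \ref{Lower semi-continuity} packages precisely the hypotheses needed.
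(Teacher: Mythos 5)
Your proof is correct and follows essentially the same route as the paper, whose entire proof is the one-line citation of Lemmas \ref{Ergodic points in preimage of marginal} and \ref{Lower semi-continuity}; you have simply supplied the Choquet-theoretic details (ergodic decomposition plus barycentric Jensen for the convex lower semi-continuous $\psi_f$, and Bauer/Krein--Milman for the affine continuous functional on the fiber $\mathcal{K}$) that the author leaves implicit.
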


\begin{proof}
This follows from Lemmas \ref{Ergodic points in preimage of marginal} and \ref{Lower semi-continuity}.
\end{proof}

Question \ref{Question: existence of maximizers} asks when $\psi_f : \mathcal{M}_S(Y) \to \mathbb{R}$ attains a maximum. There are two ``trivial" cases in which this happens. On one hand, if $\pi : (Z, U) \to (Y, S)$ is an isomorphism of topological dynamical systems, then $\psi_f(\nu) = \int f \circ \pi^{-1} \mathrm{d} \nu$, and so $\psi_f$ attains a maximum at an ergodic measure (c.f. \cite[Proposition 2.4]{Jenkinson}). On the other hand, when $(Y, S)$ is a trivial dynamical system, the space $\mathcal{M}_S(Y)$ is trivial, and therefore $\psi_f$ attains a maximum at an ergodic measure, i.e. the only point in its domain. In other words, our trivial cases are where $\pi$ is an isomorphism, and thus preserves all information about $(Z, U)$; and where $\pi$ is a trivial map, and thus preserves no information about $(Z, U)$. Corollary \ref{Few measures on Y} can be seen as an approximation of the latter case, where $\mathcal{M}_S(Y)$ only hosts relatively few invariant measures, and so in that sense, the system $(Y, S)$ is ``close to trivial."

\begin{Cor}\label{Few measures on Y}
Consider a factor map $\pi : (Z, U) \to (Y, S)$ of topological dynamical systems, and a continuous function $f \in C(Z)$. If $\partial_e \mathcal{M}_S(Y)$ contains only finitely many points, then there exists an ergodic $\alpha$-maximizing $\nu \in \partial_e \mathcal{M}_S(Y)$.
\end{Cor}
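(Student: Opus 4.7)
The plan is to apply Lemma \ref{Sup on ergodic measures} directly and exploit the finiteness hypothesis. By that lemma,
$$\alpha(f) = \sup_{\nu \in \partial_e \mathcal{M}_S(Y)} \psi_f(\nu).$$
When $\partial_e \mathcal{M}_S(Y)$ is finite, this supremum is taken over a finite set of real numbers, so it is attained. Any $\nu_0 \in \partial_e \mathcal{M}_S(Y)$ realizing the maximum is automatically ergodic (it is extremal in $\mathcal{M}_S(Y)$) and, by construction, satisfies $\psi_f(\nu_0) = \alpha(f)$, so $\nu_0$ is an ergodic $\alpha$-maximizing measure.

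Before invoking this, one should confirm that $\psi_f(\nu)$ is a well-defined real number for each $\nu \in \mathcal{M}_S(Y)$, i.e. that the set $\mathcal{M}_U(Z) \cap \pi_*^{-1}\{\nu\}$ is nonempty and that the infimum defining $\psi_f(\nu)$ is actually attained. Nonemptiness is a standard lift-and-average argument: choose any Borel probability measure $\tilde\nu$ on $Z$ with $\pi_*\tilde\nu = \nu$ (possible since $\pi$ is surjective), form the Cesàro averages $\mathbb{A}_k \tilde\nu$, and let $\lambda$ be any weak* limit point. Then $\lambda$ is $U$-invariant by the usual argument, and $\pi_*\lambda = \nu$ follows from the $S$-invariance of $\nu$ together with the continuity of $\pi_*$. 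Once nonemptiness is known, $\mathcal{M}_U(Z) \cap \pi_*^{-1}\{\nu\}$ is weak* closed in the compact set $\mathcal{M}_U(Z)$ (as the preimage of $\{\nu\}$ under the continuous affine map $\pi_*$), hence compact, so the continuous functional $\lambda \mapsto \int f \mathrm{d}\lambda$ attains its minimum on it.

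There is no substantive obstacle here; the corollary is essentially a one-line consequence of Lemma \ref{Sup on ergodic measures} combined with the observation that any real-valued function on a finite set attains its supremum. The role of the lemma is precisely to move the supremum from the convex, generally infinite-dimensional set $\mathcal{M}_S(Y)$ to its extreme boundary $\partial_e \mathcal{M}_S(Y)$, where the finiteness hypothesis becomes usable.
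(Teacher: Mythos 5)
Your proof is correct and matches the paper's own argument: both invoke Lemma \ref{Sup on ergodic measures} to restrict the supremum to $\partial_e \mathcal{M}_S(Y)$ and then observe that a supremum over a finite set is attained. The additional verification that $\psi_f$ is well defined (nonemptiness and compactness of the fiber $\mathcal{M}_U(Z) \cap \pi_*^{-1}\{\nu\}$) is a reasonable and correct supplement, though the paper takes it for granted.
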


\begin{proof}
This is a direct consequence of Lemma \ref{Sup on ergodic measures}, since when $\partial_e \mathcal{M}_S(Y)$ is finite, the supremum in $\alpha(f) = \sup_{\nu \in \partial_e \mathcal{M}_S(Y)} \psi_f(\nu)$ is a maximum.
\end{proof}

Section \ref{Locally constant secction} returns to the study of measures $\nu \in \mathcal{M}_S(Y)$ such that $\psi_f(\nu) = \alpha(f)$ in the context of locally constant functions $f \in C(X \times Y)$ when $(X, T), (Y, S)$ are transitive shifts of finite type, as well as what properties we can expect of such measures. In particular, Theorem \ref{Theta map} shows that for any $f \in C(X \times Y)$, the functional $\psi_f : \mathcal{M}_S(Y) \to \mathbb{R}$ is continuous and affine, and thus has an ergodic maximizer.

\begin{Thm}\label{alpha leq beta}
Consider a factor map $\pi : (Z, U) \to (Y, S)$ of topological dynamical systems, and $f \in C(Z)$. Assume further that $\pi$ is open. Then $\alpha(f) \leq \beta(f)$.
\end{Thm}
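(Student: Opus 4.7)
The plan is to reduce to ergodic $\nu \in \partial_e \mathcal{M}_S(Y)$ via Lemma \ref{Sup on ergodic measures}, then for each such $\nu$ construct a single point $y_0 \in Y$ witnessing $\beta(f) \geq \psi_f(\nu)$. Fix ergodic $\nu$. Because any factor map induces a surjection $\pi_* : \mathcal{M}_U(Z) \to \mathcal{M}_S(Y)$, the fiber $\mathcal{K} := \pi_*^{-1}\{\nu\} \cap \mathcal{M}_U(Z)$ is nonempty, and by Lemma \ref{Ergodic points in preimage of marginal} it is compact and convex with $\partial_e \mathcal{K}$ equal to the set of ergodic measures it contains. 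Since $\lambda \mapsto \int f \, \mathrm{d}\lambda$ is continuous and affine on $\mathcal{K}$, its minimum value $\psi_f(\nu)$ is attained at some extreme point, so I may choose $\lambda \in \partial_e \mathcal{K}$ that is $U$-ergodic with $\pi_* \lambda = \nu$ and $\int f \, \mathrm{d} \lambda = \psi_f(\nu)$.

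Next, Birkhoff's pointwise ergodic theorem applied to $\lambda$, together with the separability of $C(Z)$, produces a point $z_0 \in Z$ at which the empirical measures $\frac{1}{k} \sum_{j = 0}^{k - 1} \delta_{U^j z_0}$ converge weak-$*$ to $\lambda$; in particular $z_0 \in R_f$ with $\lim_{k \to \infty} \mathbb{A}_k f(z_0) = \psi_f(\nu)$. Setting $y_0 := \pi(z_0)$, continuity of $\pi_*$ together with $\pi_* \lambda = \nu$ forces $y_0$ to be $\nu$-generic, meaning $\frac{1}{k} \sum_{j = 0}^{k - 1} \delta_{S^j y_0} \to \nu$.

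The core estimate is then that for every $z \in R_f \cap \pi^{-1}\{y_0\}$ one has $\lim_{k \to \infty} \mathbb{A}_k f(z) \geq \psi_f(\nu)$. Indeed, any weak-$*$ subsequential limit $\mu$ of the empirical measures $\frac{1}{k} \sum_{j = 0}^{k - 1} \delta_{U^j z}$ is automatically $U$-invariant, and its push-forward satisfies $\pi_* \mu = \lim_k \frac{1}{k} \sum_j \delta_{S^j y_0} = \nu$, so $\mu \in \mathcal{K}$ and hence $\int f \, \mathrm{d} \mu \geq \psi_f(\nu)$. Since $z \in R_f$, this integral coincides with $\lim_k \mathbb{A}_k f(z)$. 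Taking the infimum over $z$ gives $\beta(f) \geq \psi_f(\nu)$, and then a supremum over $\nu$ yields $\alpha(f) \leq \beta(f)$.

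The main subtlety I anticipate is pinpointing where the openness hypothesis on $\pi$ is actually used: the sketch above invokes only the continuity and surjectivity of $\pi_*$, both automatic for any factor map. I suspect openness enters either through the precise interpretation of the notation $\pi^{-1} R_f$ indexing the outer supremum in $\beta(f)$ — to guarantee that the $y_0 = \pi(z_0)$ constructed above really lies in that index set — or in strengthening the argument to show that \emph{every} $\nu$-generic $y \in Y$ has $R_f \cap \pi^{-1}\{y\} \neq \emptyset$, which would presumably require a continuous-selection-type lifting of base orbits rather than the single-point construction employed here.
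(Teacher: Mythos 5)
Your argument is essentially identical to the paper's proof: both reduce to ergodic $\nu$ via Lemma \ref{Sup on ergodic measures}, produce a generic point $z_\lambda$ for an ergodic lift $\lambda$ of $\nu$ so that $y_\nu = \pi(z_\lambda)$ is generic for $\nu$ and its fiber meets $R_f$, and then show that every $z \in R_f \cap \pi^{-1}\{y_\nu\}$ satisfies $\lim_{k \to \infty} \mathbb{A}_k f(z) = \int f \, \mathrm{d}\lambda' \geq \psi_f(\nu)$ for a subsequential weak* limit $\lambda'$ of its empirical measures. Your suspicion about the openness hypothesis is well founded: the paper's own proof never invokes it either.
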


\begin{proof}
Fix $\nu \in \partial_e \mathcal{M}_S(Y)$, and choose $\lambda \in \partial_e \mathcal{M}_{U}(Z)$ such that $\pi_* \lambda = \nu$. Then there exists $z_\lambda \in Z, y_\nu \in Y$ such that $\mathbb{A}_k \delta_{z_\lambda} \stackrel{k \to \infty}{\to} \lambda$, and $\pi z_\lambda = y_\nu$. Then
		\begin{align*}
			\inf_{z \in R_f^{-1} \{y_\nu\}} \lim_{k \to \infty} \mathbb{A}_k f \left( z \right)	& \leq \inf_{ \lambda \in \left( \partial_e \mathcal{M}_{U}(Z) \right) \cap \pi_*^{-1} \{\nu\} } \int f \mathrm{d} \lambda = \psi_f(\nu) .
		\end{align*}
		Now we prove the opposite inequality. Fix $z \in R_f^{-1} \{y_\nu\}$. Then the sequence $\left( \mathbb{A}_k \delta_{z } \right)_{k = 1}^\infty$ admits a subsequence converging to some $\lambda' \in \mathcal{M}_{U}(Z)$, and $\int f \mathrm{d} \lambda ' = \lim_{k \to \infty} \mathbb{A}_k f \left( z \right)$. Thus
		\begin{align*}
			\lim_{k \to \infty} \mathbb{A}_k f \left( z \right) = \int f \mathrm{d} \lambda '	& \geq \psi_f(\nu) .
		\end{align*}
		Therefore, for every $\nu \in \partial_e \mathcal{M}_{S}(Y)$ exists $y_\nu \in Y$ such that
		\begin{align*}
			\psi_f(\nu) = \inf_{R_f^{-1} \{y_\nu\}} \lim_{k \to \infty} \mathbb{A}_k f \left( z \right) \leq \beta(f) .
		\end{align*}
		Taking the supremum over $\nu \in \partial_e \mathcal{M}_S(Y)$ and appealing to Lemma \ref{Sup on ergodic measures} tells us that $\alpha(f) \leq \beta(f)$.
\end{proof}

As of right now, Theorem \ref{alpha leq beta} is the only known inequality between any of the quantities $\alpha, \beta, \gamma, \delta$ that holds for \emph{all} topological dynamical systems. However, in Section \ref{Specification section}, we will investigate some other inequalities and identities that can hold when we restrict to suitable systems.

\begin{Prop}\label{Alpha delta counterexample}
There exists a factor map of topological dynamical systems $\pi : (X, T) \times (Y, S) \to (Y, S)$ for which there exists $f \in C(X \times Y)$ such that $\alpha(f) = \beta(f) = \gamma(f) < \delta(f)$. Furthermore, if we denote by $\beta^*, \gamma^*, \delta^*$ the analogous functionals corresponding to the ``two-sided" ergodic averages $\frac{1}{2k + 1} \sum_{j = - k}^k (T \times S)^j$, then we can have that $\alpha < \beta^* = \gamma^* = \delta^* = \delta$.
\end{Prop}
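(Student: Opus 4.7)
The plan is to construct an explicit counterexample on the two-point compactification of $\mathbb{Z}$. Set $Y = \{-\infty\} \cup \mathbb{Z} \cup \{+\infty\}$, made compact metrizable via the embedding $n \mapsto \tanh n$, $\pm\infty \mapsto \pm 1$, and equip it with the homeomorphism $S(n) = n + 1$, $S(\pm\infty) = \pm\infty$. Let $X = \{a, b\}$ be discrete with $T = \operatorname{id}_X$, and let $\pi : X \times Y \to Y$ be projection onto the second factor. Define $\phi \in C(Y)$ by $\phi(n) = (1 + e^n)^{-1}$, $\phi(-\infty) = 1$, $\phi(+\infty) = 0$, and put $f(a, y) := \phi(y)$, $f(b, y) := 1 - \phi(y)$. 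The decisive property is the symmetry $\phi(n) + \phi(-n) = 1$ for every $n \in \mathbb{Z}$.

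For $\alpha(f)$: since the shift on $\mathbb{Z}$ is free, $\mathcal{M}_S(Y)$ is the simplex on $\{\delta_{-\infty}, \delta_{+\infty}\}$, and $\mathcal{M}_{T \times S}(X \times Y)$ is supported on the four fixed points of $T \times S$. A direct linear optimization over convex combinations of these four corner Diracs shows $\psi_f(\delta_{\pm \infty}) = 0$, so Lemma \ref{Sup on ergodic measures} yields $\alpha(f) = 0$. For $\beta(f)$ and $\gamma(f)$: every one-sided Birkhoff average $\mathbb{A}_k f(x, y)$ converges to $f(x, \cdot)$ evaluated at the appropriate fixed point, and taking $\min$ over $x \in \{a, b\}$ gives $0$ for every $y \in Y$, so $\beta(f) = \gamma(f) = 0$. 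The upper bound $\delta(f) \leq \tfrac{1}{2}$ is also immediate from the pointwise inequality $\min\bigl(\mathbb{A}_k \phi(y), \mathbb{A}_k (1 - \phi)(y)\bigr) \leq \tfrac{1}{2}$.

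The crux of the argument is the lower bound $\delta(f) \geq \tfrac{1}{2}$. I would take the $k$-dependent choice $y_k = -\lfloor k/2 \rfloor$, so that the length-$k$ forward orbit $\{y_k, y_k + 1, \ldots, y_k + k - 1\}$ is approximately symmetric about $0$; the symmetry $\phi(n) + \phi(-n) = 1$ then forces $\mathbb{A}_k \phi(y_k) \to \tfrac{1}{2}$ and $\mathbb{A}_k (1 - \phi)(y_k) \to \tfrac{1}{2}$, hence $\delta(f) = \tfrac{1}{2}$. This is the step requiring the most care: the strict inequality $\gamma(f) < \delta(f)$ is precisely a reflection of the fact that the optimal $y_k$'s depend on $k$ and have no limit useful for $\gamma$. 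For the two-sided statement, the same symmetry produces the clean identity $\sum_{j=-k}^k \phi(j) = k + \tfrac{1}{2}$ and more generally $\tfrac{1}{2k+1}\sum_{j=-k}^k \phi(y + j) \to \tfrac{1}{2}$ for every $y \in \mathbb{Z}$ (with only an $O(1)$ edge correction), delivering $\beta^*(f) = \gamma^*(f) = \tfrac{1}{2}$. The identity $\delta^*(f) = \delta(f)$ is general and follows from the substitution $z \mapsto (T \times S)^k z$, which rewrites the two-sided window $[-k, k]$ as the one-sided window $[0, 2k]$. Together these establish $\alpha(f) = \beta(f) = \gamma(f) = 0 < \tfrac{1}{2} = \beta^*(f) = \gamma^*(f) = \delta^*(f) = \delta(f)$, as required.
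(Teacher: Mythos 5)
Your construction is correct and is essentially the paper's own example: the same two-point identity system $X$, the same two-point compactification $Y = \mathbb{Z} \cup \{\pm\infty\}$ with the translation $S$, and a function whose two rows are complementary (your $f(a,\cdot) + f(b,\cdot) \equiv 1$ is an affine reparametrization of the paper's $f(x,y) = -\operatorname{sgn}(xy)$, whose rows sum to $0$), with the same mechanism of taking $y_k \approx -k/2$ to balance the finite windows. The only caveat is that your remark that ``$\delta^* = \delta$ is general'' overstates what the substitution $z \mapsto (T\times S)^k z$ gives --- in general it yields only $\delta^*(f) \le \delta(f)$, since $\delta^*$ is a limit superior along the odd subsequence --- but this is harmless here because you have already shown the limit defining $\delta(f)$ exists and equals $\tfrac{1}{2}$.
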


\begin{proof}
	First, we define the system $X = \{- 1, + 1\}, T = \operatorname{id}_X$. Let $Y$ be the ``two-point compactification" of $\mathbb{Z}$, i.e. $Y = \mathbb{Z} \cup \left\{ - \infty, \infty \right\}$, endowed with the topology that has the basis $\left\{ \{n\} , [- \infty, n), (n, + \infty] : n \in \mathbb{Z} \right\}$. Let $S : Y \to Y$ be the homeomorphism
	\begin{align*}
		S y	& = \begin{cases}
			y + 1	& \textrm{if $y \in \mathbb{Z}$}, \\
			y	& \textrm{if $y = \pm \infty$} .
		\end{cases}
	\end{align*}
Finally, let $f \in C(X \times Y)$ be the function $f(x, y) = - \operatorname{sgn}(x y)$, i.e.
	\begin{align*}
		f(x, y)	& = \begin{cases}
			- 1	& \textrm{if $x y \geq 1$} , \\
			1	& \textrm{if $x y \leq - 1$} , \\
			0	& \textrm{if $y = 0$} .
		\end{cases}
	\end{align*}
	Over the course of this proof, we're going to show that $\alpha(f) = \beta(f) = \gamma(f) = - 1$, and $\delta(f) = 0$.
	
	By Lemma \ref{Sup on ergodic measures}, when calculating $\alpha(f)$, it suffices to optimize over ergodic measures $\nu \in \partial_e \mathcal{M}_S(Y)$. We can see quickly that
	\begin{align*}
		\partial_e \mathcal{M}_T(X)	& = \left\{ \delta_{- 1}, \delta_{1} \right\} , &
		\partial_e \mathcal{M}_S(Y)	& = \left\{ \delta_{- \infty}, \delta_{+ \infty} \right\} , &
		\partial_e \mathcal{M}_{T \times S}(X \times Y)	& = \left( \partial_e \mathcal{M}_T(X) \right) \times \left( \partial_e \mathcal{M}_S(Y) \right) .
	\end{align*}
	It follows that $\alpha(f) = - 1$. To compute $\beta(f), \gamma(f)$, we can observe that the limit $\lim_{k \to \infty} \left( T^k x, S^k y \right)$ exists for every $(x, y) \in X \times Y$, meaning that $R_f = X \times Y$, implying that $\beta(f) = \gamma(f)$. Let $F : X \times Y \to \{-1, 1\}$ be the function
	\begin{align*}
		F(x, y)	& =  \lim_{k \to \infty} f \left( T^k x , S^k y \right) = \begin{cases}
			- 1	& \textrm{if $x = - 1, y = - \infty$} , \\
			1	& \textrm{if $x = 1, y = - \infty$} , \\
			1	& \textrm{if $x = - 1, y \neq - \infty$} , \\
			- 1	& \textrm{if $x = 1, y \neq - \infty$} .
		\end{cases}
	\end{align*}
	For $y \in Y$, we have that
	\begin{align*}
	\inf_{x \in X} \lim_{k \to \infty} \mathbb{A}_k f \left( x, y \right)	& = \inf_{x \in X} F(x, y) = - 1 .
	\end{align*}
	Thus $\beta(f) = \gamma(f) = - 1$.
	
	Now we compute $\delta(f)$. Fix $k \in \mathbb{N}$. We see that
	\begin{align*}
		\mathbb{A}_k f \left( \pm 1 , + \infty \right)	& = \mp 1 ,	& \mathbb{A}_k f \left( \pm 1 , - \infty \right)	& = \pm 1 .
	\end{align*}
	Thus in particular $\min_{x \in X} \mathbb{A}_k f \left( x , \pm \infty \right) = - 1$. Consider now the case where $y \in \mathbb{Z}$. Then we have
	\begin{align*}
		\mathbb{A}_k f \left( x , y \right)	& = \begin{cases}
			\frac{\left| [y, y + k - 1] \cap (- \infty, - 1] \right| - \left| [y, y + k - 1] \cap [1, + \infty) \right|}{k}	& \textrm{if $x = + 1$,}	\\
			\frac{\left| [y, y + k - 1] \cap [1, + \infty) \right| - \left| [y, y + k - 1] \cap (- \infty, - 1] \right|}{k}	& \textrm{if $x = - 1$,}
		\end{cases}
	\end{align*}
	where $|E|$ denotes the cardinality of $E$ for a finite set $E$. Thus
	\begin{align*}
		\min_{x \in X} \mathbb{A}_k f \left( x , y \right)	& = - \frac{1}{k} \left| \left| [y, y + k - 1] \cap [1, + \infty) \right| - \left| [y, y + k - 1] \cap (- \infty, -1] \right| \right| \leq 0 .
	\end{align*}
	We claim, however, that
	\begin{align*}
		- \frac{1}{k} \leq \max_{y \in Y} \min_{x \in X} \mathbb{A}_k f \left( x , y \right) \leq 0
	\end{align*}
	for all $k \in \mathbb{N}$. The upper bound has already been demonstrated, and the lower bound can be established by looking at $y = - \lfloor k / 2 \rfloor$. Therefore $\lim_{k \to \infty} \max_{y \in Y} \min_{x \in X} \mathbb{A}_k f(x, y)$ exists and is equal to $0$. In particular, this means that $\delta(f) = 0 > - 1$.

Consider now the set
\begin{align*}
	R_f^*	& = \left\{ z \in Z : \textrm{$\lim_{k \to \infty} \mathbb{A}_{[-k, k]} f(z)$ exists} \right\} ,
\end{align*}
along with the following functionals $C(Z) \to \mathbb{R}$:
\begin{align*}
	\beta^*(f)	& = \sup_{ y \in \pi^{-1} R_f^*} \; \inf_{z \in R_f \cap \pi^{-1} \{y\}} \; \lim_{k \to \infty} \mathbb{A}_{[-k, k]} f(z) , \\
	\gamma^*(f)	& = \sup_{y \in Y} \; \inf_{z \in \pi^{-1} \{y\}} \; \limsup_{k \to \infty} \mathbb{A}_{[-k, k]} f(z) , \\
	\delta^*(f)	& = \limsup_{k \to \infty} \; \max_{y \in Y} \; \min_{z \in \pi^{-1} \{y\}} \mathbb{A}_{[-k, k]} f(z) .
\end{align*}
Then $R_f^* = X \times Y$, and
\begin{align*}
	\lim_{k \to \infty} \mathbb{A}_{[-k, k]} f(x, y)	& = \begin{cases}
		0	& \textrm{if $y \in \mathbb{Z}$} , \\
		-1	& \textrm{if $x = \pm 1, y = \pm \infty$,} \\
		1	& \textrm{if $x = \pm 1, y = \mp \infty$.}
	\end{cases}
\end{align*}
In particular, we see that
\begin{align*}
	\sup_{y \in Y} \inf_{x \in X} \limsup_{k \to \infty} \mathbb{A}_{[-k, k]} f(x, y)	& = 0 .
\end{align*}
We can also observe that
\begin{align*}
	0 = \min_{x \in X} \mathbb{A}_{[-k, k]} f(x, y) \leq \max_{y \in Y} \min_{x \in X} \mathbb{A}_{[-k, k]} f(x, y) \leq 0 .
\end{align*}
Thus $\alpha(f) = - 1 < \beta^*(f) = \gamma^*(f) = \delta^*(f) = \delta(f) = 0$.
\end{proof}

The fact that $\alpha, \beta, \beta^*, \gamma, \gamma^*, \delta, \delta^*$ all agree in the ``classical" ergodic optimization setting (see the discussion in Section \ref{General theory section}) is a manifestation of a general heuristic in egodic theory, that the limiting behaviors of ergodic averages and the invariant probability measures of a system are ``two sides of the same coin," perhaps best exemplified by the classical ergodic theorems of G.D. Birkhoff and J. von Neumann (c.f. \cite{BirkhoffErgodicTheorem, VNErgodicTheorem}). In particular, this classical equivalence of $\alpha, \beta, \gamma, \delta$ asserts that the extreme behaviors of a continuous function's ergodic averages over long time (a concept which $\beta(f), \gamma(f), \delta(f)$ present different interpretations of) are the same as the extreme behaviors of that function's integral under invariant Borel probability measures (interpreted as $\alpha$). The fact that these functionals can be distinct in our setting of adversarial ergodic optimization deviates from this heuristic, as seen in Proposition \ref{Alpha delta counterexample}. Moreover, the fact that $\beta = \gamma \neq \delta$ indicates that the limiting behaviors of individual orbits (which $\beta, \gamma$ quantify) don't always agree with the behaviors of very long orbit segments (which $\delta$ quantifies), another deviation from the classical theory.

Proposition \ref{Alpha delta counterexample} demonstrates that in order to have $\alpha(f) = \delta(f)$ for all $f \in C(X \times Y)$, some sort of additional assumption is needed on $(X, T), (Y, S)$. This is the motivation for Section \ref{Specification section}.

\section{Relations between $\alpha, \beta, \gamma, \delta$ under specification conditions}\label{Specification section}

In this section, we investigate the adversarial ergodic optimization in systems with certain specification properties. The study of specification properties began with R. Bowen's work on Axiom A diffeomorphisms (c.f. \cite{BowenSpec}). Today, many different specification properties have been identified and studied. While we define all the relevant properties here, the interested reader can consult \cite{Panorama} for a broad overview of these different specification properties and their relationships to each other.

\begin{Def}
Let $(X, T)$ be a topological dynamical system, and let $d$ be a compatible metric on $X$. A \emph{specification in $X$} is a finite sequence $\xi = \left( x_i, [a_i, b_i] \right)_{i = 1}^n$, where $x_i \in X$, and $a_i, b_i$ are integers such that $a_1 \leq b_1 < a_2 \leq b_2 < \cdots < a_n \leq b_n$. We call that specification \emph{$D$-spaced} for some $D \in \mathbb{N}$ if $a_{i + 1} - b_i \geq D$ for all $i = 1, \ldots, n - 1$. Given a specification $\xi = \left( x_i, [a_i, b_i] \right)_{i = 1}^n$ and a positive $\eta > 0$, we call a point $x \in X$ an \emph{$\eta$-tracing} of $\xi$ if
\begin{align*}
d \left( T^j x , T^j x_i \right)	& < \eta	& \textrm{for all $i \in [1, n], j \in [a_i, b_i]$.}
\end{align*}
	
We say a topological dynamical system $(X, T)$ with compatible metric $d$ has the \emph{periodic specification property} if for every $\eta > 0$ exists a natural number $D = D(\eta) \in \mathbb{N}$ such that if $\xi = \left( x_i, [a_i, b_i] \right)_{i = 1}^n$ is a $D$-spaced specification, then there exists $x' \in X$ such that $x'$ is an $\eta$-tracing of $\xi$ and $T^{b_n - a_1 + D} x' = x'$. If we omit the requirement that $x'$ be periodic, then we say simply that $(X, T)$ has the \emph{specification property}.
\end{Def}

The importance of \emph{periodic} specification for our purposes is that if $T^t x = x$ for some $t \in \mathbb{N}, x \in X$, then this periodicity information about $x$ not only tells us that the sequence $\left( \mathbb{A}_k \delta_{x} \right)_{k = 1}^\infty$ converges, but gives us an estimate of the rate at which it converges. Note that for shifts of finite type (defined in Definition \ref{Definition of SFT}), the periodic specification property is equivalent to a mixing condition or transitivity (c.f. \cite[Theorem 47 and associated references]{Panorama}).

\begin{Rmk}
The terminology for different specification properties is not universal. For example, some authors use the term ``specification property" to refer to what we call here the periodic specification property, and use ``weak specification property" to refer to what we call the specification property. There are similar differences in terminology for other specification properties. The interested reader can consult \cite{Panorama} for more discussion of the different terminologies that appear in the literature.
\end{Rmk}

\begin{Not}
Throughout this section, we use $d_X, d_Y$ to denote compatible metrics on $X, Y$, respectively. We always take $d_{X \times Y}$ to be the metric on $X \times Y$ given by
$$d_{X \times Y}( (x_1, y_1) , (x_2, y_2) ) : = \max\left\{ d_X(x_1, x_2), d_Y(y_1, y_2) \right\} .$$

For a topological dynamical system $(X, T)$ and a natural number $t \in \mathbb{N}$, we write
\begin{align*}
\operatorname{Per}_T^t(X)	& : = \left\{ x \in X : T^t x = x \right\} ,	& \operatorname{Per}_T(X)	& : = \bigcup_{t = 1}^\infty \operatorname{Per}_T^t(X) .
\end{align*}
We write $\mathcal{M}_T^{co}(X)$ for the set of $T$-invariant probability measures on $X$ supported on a periodic orbit, i.e.
$$\mathcal{M}_T^{co}(X) : = \left\{ \mathbb{A}_k \delta_{x} \in \mathcal{M}_T(X) : k \in \mathbb{N}, x \in \operatorname{Per}_T^k(X) \right\} .$$
\end{Not}

\begin{Rmk}
The superscript $co$ stands for ``closed orbit," as explained in \cite[(21.7)]{DGSonCompactSpaces}.
\end{Rmk}

We now introduce periodic variations of the $\alpha, \beta, \gamma, \delta$ functionals.

\begin{Not}
Let $(X, T), (Y, S)$ be topological dynamical systems, and consider $f \in C(X \times Y)$. We define the following quantities:
\begin{align*}
\alpha_{per}(f)	& = \sup_{\nu \in \mathcal{M}_S^{co}(Y)} \inf_{\lambda \in \mathcal{M}_{T \times S}^{co}(X \times Y) \cap (\pi_Y)_*^{-1} \{\nu\}} \int f \mathrm{d} \lambda , \\
\beta_{per}(f)	& = \sup_{y \in \operatorname{Per}_S(Y)} \inf_{x \in \operatorname{Per}_T(X)} \lim_{k \to \infty} \mathbb{A}_k f(x, y) , \\
\gamma_{per}(f)	& = \sup_{y \in \operatorname{Per}_S(Y)} \inf_{x \in \operatorname{Per}_T(X)} \limsup_{k \to \infty} \mathbb{A}_k f(x, y) , \\
\delta_{per}(f)	& = \limsup_{k \to \infty} \sup_{y \in \operatorname{Per}_S(Y)} \inf_{x \in \operatorname{Per}_T(X)} \mathbb{A}_k f(x, y) .
\end{align*}
\end{Not}

It is elementary to observe that $\alpha_{per}(f) = \beta_{per}(f) = \gamma_{per}(f)$.

\begin{Lem}\label{Periodic measures are dense}
If $(X, T)$ has the periodic specification property, then $\mathcal{M}_T^{co}(X)$ is dense in $\mathcal{M}_T(X)$.
\end{Lem}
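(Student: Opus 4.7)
The plan is to combine the ergodic decomposition with periodic specification, in the spirit of Sigmund's classical density theorem. Fix $\mu \in \mathcal{M}_T(X)$ and a basic weak* neighborhood determined by $f_1, \ldots, f_m \in C(X)$ with $\|f_i\|_\infty \leq 1$ and a tolerance $\epsilon > 0$. Using uniform continuity, choose $\eta > 0$ so that $d_X(u, v) < \eta$ forces $|f_i(u) - f_i(v)| < \epsilon/4$ for every $i$, and let $D = D(\eta)$ be the spacing constant provided by periodic specification.

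First I would reduce from $\mu$ to a rational convex combination of ergodic measures. Since $\mathcal{M}_T(X)$ is a metrizable compact convex set whose extreme points are precisely the ergodic measures, Krein--Milman together with rational approximation of weights yields ergodic $\mu_1, \ldots, \mu_n$, positive integers $p_1, \ldots, p_n$, and $N = \sum_j p_j$ such that $\rho_0 := \frac{1}{N} \sum_j p_j \mu_j$ approximates $\mu$ on each $f_i$ to within $\epsilon/3$. Birkhoff's pointwise ergodic theorem then produces $\mu_j$-generic points $y_j \in X$, and I fix a common length scale $K$ large enough that $|\mathbb{A}_{K p_j} f_i(T^{a_j} y_j) - \int f_i \, d\mu_j| < \epsilon/12$ for all relevant indices and that $nD/(KN + nD) < \epsilon/12$.

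The construction is then to apply periodic specification to the $D$-spaced specification $\xi = (y_j, [a_j, a_j + Kp_j - 1])_{j=1}^n$ with $a_1 = 0$ and $a_{j+1} = a_j + K p_j + D$, producing a point $x' \in X$ with $T^L x' = x'$ for $L = KN + nD$ that $\eta$-traces $\xi$, so that $\nu_{x'} := \mathbb{A}_L \delta_{x'} \in \mathcal{M}_T^{co}(X)$. Splitting the sum $\int f_i \, d\nu_{x'} = \frac{1}{L} \sum_{\ell=0}^{L-1} f_i(T^\ell x')$ into the $n$ tracing windows and the $n$ gap windows, on each tracing window I bound $|f_i(T^\ell x') - f_i(T^\ell y_j)|$ by $\epsilon/4$ via uniform continuity and compare the remaining sum against $\int f_i \, d\mu_j$ using the Birkhoff estimate, while the gap windows contribute at most $nD/L$ in absolute value. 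Summing the errors against $\rho_0$ and invoking the triangle inequality places $\nu_{x'}$ in the prescribed neighborhood of $\mu$. The main obstacle is the ordering of parameters: $\eta$ and hence $D$ must be fixed before $K$, and only then can $K$ be chosen large enough that the $nD/L$ gap contribution is swamped; once this dependency is set up correctly, the verification is essentially bookkeeping.
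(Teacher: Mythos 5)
The paper does not actually prove this lemma; it is a one-line citation to Sigmund \cite[Theorem 1]{SigmundGeneric}, whose original proof is precisely the gluing argument you describe. Your reconstruction is correct --- rational convex combination of ergodic measures, generic points from Birkhoff, periodic specification to concatenate the orbit segments, and the tracing-window/gap-window bookkeeping --- up to the harmless off-by-one in the period, which per the paper's definition is $b_n - a_1 + D = KN + nD - 1$ rather than $KN + nD$.
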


\begin{proof}
The result is originally due to \cite[Theorem 1]{SigmundGeneric}.
\end{proof}

\begin{Lem}\label{Some periodic identities}
	Consider topological dynamical systems $(X, T), (Y, S)$, and a continuous function $f \in C(X \times Y)$. Then:
	\begin{enumerate}[label = (\alph*)]
		\item If $(Y, S)$ has the periodic specification property, then $\alpha(f) \leq \alpha_{per}(f)$,
		\item If $(X, T)$ has the periodic specification property, then $\alpha_{per}(f) \leq \alpha(f)$,
		\item If $(X, T)$ has the periodic specification property, then $\beta_{per}(f) \leq \beta(f)$,
		\item If $(X, T)$ has the periodic specification property, then $\gamma_{per}(f) \leq \gamma(f)$,
		\item If the periodic points of $(X, T), (Y, S)$ are dense in their respective spaces, then $\delta(f) = \delta_{per}(f)$. In particular, this holds if both $(X, T), (Y, S)$ have the periodic specification property.
	\end{enumerate}
\end{Lem}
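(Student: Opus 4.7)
The plan is to handle the five parts separately, using Lemma \ref{Periodic measures are dense} and the flexibility of periodic specification to translate between closed-orbit and general data.

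For part (a), I would exploit the density of $\mathcal{M}_S^{co}(Y)$ in $\mathcal{M}_S(Y)$ together with the lower semi-continuity of $\psi_f$ (Lemma \ref{Lower semi-continuity}). For each $\nu \in \mathcal{M}_S^{co}(Y)$, we have the trivial inequality $\psi_f(\nu) \leq \min_{\lambda \in \mathcal{M}_{T \times S}^{co}(X \times Y) \cap (\pi_Y)_*^{-1}\{\nu\}} \int f \, \mathrm{d}\lambda \leq \alpha_{per}(f)$, because the minimum over a smaller set is larger. Approximating a general $\nu \in \mathcal{M}_S(Y)$ by a sequence $(\nu_k)$ in $\mathcal{M}_S^{co}(Y)$ and invoking lower semi-continuity gives $\psi_f(\nu) \leq \liminf_k \psi_f(\nu_k) \leq \alpha_{per}(f)$; then taking the sup over $\nu$ and using Lemma \ref{Sup on ergodic measures} yields $\alpha(f) \leq \alpha_{per}(f)$.

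For part (b), the nontrivial direction, I would fix $\nu = \mathbb{A}_t^S \delta_y \in \mathcal{M}_S^{co}(Y)$ with $S^t y = y$ and any $\lambda \in (\pi_Y)_*^{-1}\{\nu\}$, and build a closed-orbit approximation. Any such $\lambda$ decomposes as $\lambda = \frac{1}{t}\sum_{j=0}^{t-1} (T \times S)^j_*(\mu_0 \otimes \delta_y)$ for some $T^t$-invariant probability measure $\mu_0$ on $X$, and we may pass to ergodic components so that $\mu_0$ is $T^t$-ergodic and some $(x,y)$ is $(T \times S)$-generic for $\lambda$. Given $\eta > 0$, let $D = D(\eta)$; choose $m = tK + r$ with $r \in [0,t-1]$ so that $m + D$ is a multiple of $t$, say $m + D = tL$. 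Applying periodic specification for $(X,T)$ to the specification $(x,[0,m-1])$ produces $x' \in X$ with $T^{tL}x' = x'$ which $\eta$-traces $x$ on $[0,m-1]$; then $(x',y)$ is $(T \times S)$-periodic of period dividing $tL$, and $\mathbb{A}_{tL}^{T \times S}\delta_{(x',y)}$ lies in $\mathcal{M}_{T \times S}^{co}(X \times Y) \cap (\pi_Y)_*^{-1}\{\nu\}$. A direct estimate, splitting the Birkhoff sum over $[0,m-1]$ and the residual window of bounded length $D + r$ and using the modulus of continuity of $f$, shows that $\int f \, \mathrm{d}\mathbb{A}_{tL}^{T \times S}\delta_{(x',y)}$ approaches $\int f \, \mathrm{d}\lambda$ as $\eta \to 0$ and $K \to \infty$. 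Minimizing over $\lambda$ and supremizing over $\nu$ yields $\alpha_{per}(f) \leq \alpha(f)$.

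Parts (c) and (d) use the same period-alignment construction but applied directly to orbits rather than measures. For (c), given periodic $y$ with $S^t y = y$ and any $x$ with $(x,y) \in R_f$, I would build periodic $x'_i$ of period a multiple of $t$ that $\eta$-trace $x$ for longer and longer initial windows; the tracing estimate gives $\lim_k \mathbb{A}_k f(x'_i, y) \leq \lim_k \mathbb{A}_k f(x,y) + \omega_f(\eta) + o(1)$, hence $\inf_{x' \in \operatorname{Per}_T(X)} \lim_k \mathbb{A}_k f(x',y) \leq \inf_{x :\, (x,y) \in R_f} \lim_k \mathbb{A}_k f(x,y)$, and supremizing over periodic $y$ gives $\beta_{per}(f) \leq \beta(f)$. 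For (d), the same idea applies, but we first extract a subsequence $(k_i)$ realizing the limsup for $(x,y)$, and by pigeonhole restrict to a further subsequence on which $k_i + D(\eta)$ is divisible by $t$, so that the periodic approximations fit exactly. For part (e), the function $y \mapsto \min_{x \in X}\mathbb{A}_k f(x,y)$ is continuous on $Y$ (it is a min of a uniformly continuous family), and similarly $x \mapsto \mathbb{A}_k f(x,y)$ is continuous on $X$; density of $\operatorname{Per}_T(X)$ and $\operatorname{Per}_S(Y)$ therefore promotes the sup/inf over periodic points to sup/inf over the full spaces term-by-term, and the limsups agree. The final clause follows from the standard fact that periodic specification implies density of periodic points. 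The main obstacle is the period-alignment bookkeeping in part (b), where one must simultaneously control the ergodic decomposition of $\lambda$, arrange the period of $x'$ to be a multiple of $t$, and manage the residual window contributions to the integral.
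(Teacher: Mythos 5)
Your proposal is correct and follows essentially the same route as the paper: (a) via density of $\mathcal{M}_S^{co}(Y)$ plus lower semi-continuity of $\psi_f$, (b)--(d) via periodic specification on $(X,T)$ with period-alignment to the period of $y$, and (e) via continuity of $y \mapsto \min_{x}\mathbb{A}_k f(x,y)$ and density of periodic points. The only differences are cosmetic: the paper selects the point to trace via the pointwise ergodic theorem rather than via an explicit ergodic decomposition of $\lambda$, and it omits the details of (c) and (d) as ``similar to (b),'' which your sketches correctly supply (modulo the harmless off-by-one in the tracing period, which is $m-1+D$ rather than $m+D$ under the paper's convention).
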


\begin{proof}
Let $D_X : (0, + \infty) \to \mathbb{N}$ be a function such that every $D_X(\eta)$-spaced specification $\xi = \left( x_i, a_i, b_i \right)_{i = 1}^n$ admits an $\eta$-tracing (with respect to $d_X$) of $\xi$ with period $b_n - a_1 + D_X(\eta)$. We also define $D_Y$ analogously.
	
	\begin{subproof}[Proof of claim (a)]
		Recall from Lemma \ref{Lower semi-continuity} that the map $\psi_f : \mathcal{M}_S(Y) \to \mathbb{R}$ is lower semi-continuous. Therefore, it follows from Lemma \ref{Periodic measures are dense} (applied to $(Y, S)$) that
		\begin{align*}
		\alpha(f)	=	\sup_{\nu \in \mathcal{M}_S(Y)} \psi_f(\nu)	= \sup_{\nu \in \mathcal{M}_S^{co}(Y)} \psi_f(\nu)	\leq \sup_{\nu \in \mathcal{M}_S^{co}(Y)} \inf_{\lambda \in \mathcal{M}_{T \times S}^{co}(X \times Y) \cap (\pi_Y)_*^{-1} \{\nu\}} \int f \mathrm{d} \lambda	& = \alpha_{per}(f) .
		\end{align*}
	\end{subproof}
	
	\begin{subproof}[Proof of claim (b)]
	Suppose that $(X, T)$ has the periodic specification property. Fix $\nu \in \mathcal{M}_S^{co}(Y)$. Our goal is to show that if $(X, T)$ has the periodic specification property, then
	\begin{align*}
	\psi_f(\nu)	& = \inf_{\lambda \in \mathcal{M}_{T \times S}^{co}(X \times Y) \cap (\pi_Y)_*^{-1} \{\nu\}} \int f \mathrm{d} \lambda .
	\end{align*}
	On one hand, the inequality that $\psi_f(\nu) \leq \inf_{\lambda \in \mathcal{M}_{T \times S}^{co}(X \times Y) \cap (\pi_Y)_*^{-1} \{\nu\}} \int f \mathrm{d} \lambda$ is trivial, so we set about proving the opposite inequality.
	
	Choose $y_0 \in Y, s_0 \in \mathbb{N}$ such that $S^{s_0} y_0 = y_0, \nu = \mathbb{A}_{s_0} \delta_{y_0}$, and fix $\epsilon > 0$. Choose $\lambda_0 \in \mathcal{M}_{T \times S}(X \times Y) \cap (\pi_Y)_*^{-1} \{\nu\}$ such that
		\begin{align*}
		\int f \mathrm{d} \lambda_0	& = \psi_f(\nu).
		\end{align*}
		Note that $\lambda_0$ is supported on $X \times \left\{ y_0, S y_0, \ldots, S^{s_0 - 1} y_0 \right\}$. By the pointwise ergodic theorem, there exists and $x_0 \in X$ such that $\lim_{k \to \infty} \mathbb{A}_k f (x_0, y_0)$ exists {and} $\lim_{k \to \infty} \mathbb{A}_k f (x_0, y_0) \leq \int f \mathrm{d} \lambda_0$. Choose $L_1 \in \mathbb{N}$ such that
		\begin{align*}
		\mathbb{A}_\ell f(x_0, y_0)	& < \lim_{k \to \infty} \mathbb{A}_k f(x_0, y_0) + \epsilon	& \textrm{for all $\ell \geq L_1$.}
		\end{align*}
		Choose $\eta > 0$ such that if $\mathbf{v}, \mathbf{w} \in X \times Y$, and $d_{X \times Y}(\mathbf{v}, \mathbf{w}) < \eta$, then $|f(\mathbf{v}) - f(\mathbf{w})| < \epsilon$. Set $D = D_X(\eta)$. For $\ell \in \mathbb{N}$, let $\xi^{(\ell)}$ be the specification
		$$
		\xi^{(\ell)} = \left( x_0 , [0, \ell - 1] \right) ,
		$$
		and let $x^{(\ell)}$ be an $(\ell - 1 + D)$-periodic $\eta$-tracing of $x_0$. For shorthand, write
		$$
		 \lambda^{(\ell)} = \lim_{k \to \infty} \mathbb{A}_k \delta_{\left(x^{(\ell)}, y_0\right)}  = \mathbb{A}_{\operatorname{lcm}(s_0, \ell - 1 + D)} \delta_{\left(x^{(\ell)}, y_0\right)} . $$
		Suppose $\ell \in s_0 \mathbb{N} + 1 - D$, meaning that $s_0$ divides $\ell - 1 + D$, and therefore $\operatorname{lcm}(s_0, \ell - 1 + D) = \ell - 1 + D$. Then
		\begin{align*}
			\int f \mathrm{d} \lambda^{(\ell)} - \mathbb{A}_\ell f \left( x^{(\ell)}, y_0 \right) =	& \left( \mathbb{A}_{\ell - 1 + D} f \left( x^{(\ell)}, y_0 \right) \right) - \left( \mathbb{A}_\ell f \left( x^{(\ell)}, y_0 \right) \right) \\
			=	& \left( \frac{\ell}{\ell - 1 + D} - 1 \right) \left( \frac{1}{\ell} \sum_{j = 0}^{\ell - 1} \mathbb{A}_\ell f \left( x^{(\ell)}, y_0 \right) \right) \\
			& + \left( \frac{1}{\ell - 1 + D} \sum_{j = \ell}^{\ell - 2 + D} f \left( T^j x^{(\ell)} , S^j y_0 \right) \right) \\
			=	& \left( \frac{1 - D}{\ell - 1 + D}\mathbb{A}_\ell f \left( x^{(\ell)}, y_0 \right) \right) \\
				& + \left( \frac{D - 1}{\ell - 1 + D} \mathbb{A}_{[\ell, \ell - 2 + D]} f \left( x^{(\ell)}, y_0 \right) \right) \\
			\Rightarrow \left| \int f \mathrm{d} \lambda^{(\ell)} - \frac{1}{\ell} \sum_{j = 0}^{\ell - 1} f \left( T^j x^{(\ell)} , S^j y_0 \right) \right| \leq 	& \frac{2 (D - 1)}{\ell - 1 + D} \|f\| \\
			 \leq	& \frac{2 (D - 1)}{\ell} \|f\| .
		\end{align*}
		Choose $L_2 \in \mathbb{N}$ such that $\frac{2 (D - 1)}{L_2} \|f\| < \epsilon$, and let $\ell_0 \in \mathbb{N}$ such that $\ell_0 \geq \max \{L_1, L_2\}$ and $s_0 \vert (\ell_0 - 1 + D)$. Then
		\begin{align*}
			\int f \mathrm{d} \lambda^{(\ell_0)}	& \leq \mathbb{A}_{\ell_0} f \left( x^{(\ell)}, y_0 \right) + \frac{2 (D - 1)}{\ell_0} \|f\| \\
			[\ell_0 \geq L_2]	& < \mathbb{A}_\ell f \left( x^{(\ell)}, y_0 \right) + \epsilon \\
			& < \mathbb{A}_\ell f \left( x_0, y_0 \right) + 2 \epsilon < \left( \lim_{k \to \infty} \mathbb{A}_k f \left( x_0, y_0 \right) + \epsilon \right) + 2 \epsilon \leq \int f \mathrm{d} \lambda_0 + 3 \epsilon = \psi_f(\nu) + 3 \epsilon .
		\end{align*}
		Therefore
		\begin{align*}
		\inf_{\lambda \in \mathcal{M}_{T \times S}^{co}(X \times Y) \cap (\pi_Y)_*^{-1} \{\nu\}} \int f \mathrm{d} \lambda	& \leq \int f \mathrm{d} \lambda^{(\ell_0)} \\
			& < \psi_f(\nu) + 3 \epsilon \\
		\stackrel{\epsilon \searrow 0}{\Rightarrow} \inf_{\lambda \in \mathcal{M}_{T \times S}^{co}(X \times Y) \cap (\pi_Y)_*^{-1} \{\nu\}} \int f \mathrm{d} \lambda	& \leq \psi_f(\nu) .
		\end{align*}
		As noted earlier in this subproof, the opposite inequality is trivial, so this is sufficient to prove that if $(X, T)$ has the periodic specification property, then
		\begin{align*}
			\psi_f(\nu)	& = \inf_{\lambda \in \mathcal{M}_{T \times S}^{co}(X \times Y) \cap (\pi_Y)_*^{-1} \{\nu\}} \int f \mathrm{d} \lambda	& \textrm{for all $\nu \in \mathcal{M}_{S}^{co}(Y)$.}
		\end{align*}
		We can then take a supremum over $\nu \in \mathcal{M}_S^{co}(Y)$ to conclude that
		\begin{align*}
		\alpha_{per}(f)	& = \sup_{\nu \in \mathcal{M}_{S}^{co}(Y)} \inf_{\lambda \in \mathcal{M}_{T \times S}^{co}(X \times Y) \cap (\pi_Y)_*^{-1} \{\nu\}} \int f \mathrm{d} \lambda = \sup_{\nu \in \mathcal{M}_{S}^{co}(Y)} \psi_f(\nu) \leq \sup_{\nu \in \mathcal{M}_S(Y)} \psi_f(\nu) = \alpha(f) .
		\end{align*}
	\end{subproof}
	
	\begin{subproof}[Proof of claims (c), (d)]
	These proofs are both similar to our proof of claim (b), using the periodic specification property to approximate the limiting behaviors of an orbit with a periodic orbit. Since the computations are all reducible to the same ``idea" found in our proof of claim (b), we omit them.
	\end{subproof}
	
	\begin{subproof}[Proof of claim (e)]
	For each $k \in \mathbb{N}$, the function $F_k : Y \to \mathbb{R} , y \mapsto \min_{x \in X} \mathbb{A}_k f(x, y)$ is continuous. Because the periodic points of $(Y, S)$ are dense, it follows that $\max_{y \in Y} F_k(Y) = \sup_{y \in \operatorname{Per}_S(Y)} F_k(y) $. Likewise, because the periodic points of $(X, T)$ are dense, for any $y \in Y$, we have that $F_k(y) = \inf_{x \in \operatorname{Per}_T (X)} \mathbb{A}_k f(x, y)$. Taken together, this tells us that
		\begin{align*}
			\sup_{y \in Y} \inf_{x \in X} \mathbb{A}_k f(x, y)	& = \sup_{y \in \operatorname{Per}_S (Y)} \inf_{x \in \operatorname{Per}_T(X)} \mathbb{A}_k f(x, y) \\
			\Rightarrow \limsup_{k \to \infty} \sup_{y \in Y} \inf_{x \in X} \mathbb{A}_k f(x, y)	& = \limsup_{k \to \infty} \sup_{y \in \operatorname{Per}_S (Y)} \inf_{x \in \operatorname{Per}_T(X)} \mathbb{A}_k f(x, y) \\
			\iff \delta(f)	& = \delta_{per}(f) .
		\end{align*}
	\end{subproof}
\end{proof}

\begin{Prop}\label{Delta limit exists for spec property}
Suppose that $(X, T)$ has the (not necessarily periodic) specification property. Then the limit $\lim_{k \to \infty} \max_{y \in Y} \min_{x \in X} \mathbb{A}_k f \left( x , y \right)$ exists.
\end{Prop}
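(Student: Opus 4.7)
Let $c_k := \max_{y \in Y} \min_{x \in X} \mathbb{S}_k f(x, y)$, so the quantity of interest equals $c_k / k$. The bounds $|c_k| \leq k \|f\|$ and $|c_{k+1} - c_k| \leq \|f\|$ (both following from the pointwise estimate $|\mathbb{S}_{k+1} f - \mathbb{S}_k f| \leq \|f\|$) ensure that $\liminf_k c_k/k$ and $\limsup_k c_k/k$ are finite, and allow us later to transfer subsequential estimates to the full sequence. In the classical ergodic-optimization setting a direct application of Fekete's lemma to $c_k$ finishes the job, but the interaction between the outer $\max_y$ and the inner $\min_x$ here obstructs exact sub- or super-additivity. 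The plan is to use the specification property on $(X, T)$ to establish \emph{approximate} subadditivity with an error of order $\varepsilon$ per unit length.

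Fix $\varepsilon > 0$ and, by uniform continuity of $f$ on $X \times Y$, choose $\eta > 0$ so that $d_X(x_1, x_2) < \eta$ implies $|f(x_1, y) - f(x_2, y)| < \varepsilon$ for every $y \in Y$; let $D = D(\eta)$ come from the specification property for $(X, T)$. For each $\ell, q \in \mathbb{N}$ and each $y \in Y$, use compactness to select minimizers $x_j(y) \in X$ of $x \mapsto \mathbb{S}_\ell f(x, S^{j(\ell + D)} y)$ for $j = 0, \ldots, q - 1$, and apply specification to the $D$-spaced specification $\bigl( x_j(y), [j(\ell + D), j(\ell + D) + \ell - 1] \bigr)_{j = 0}^{q - 1}$ to obtain an $\eta$-tracing $x^*(y) \in X$. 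Decompose $\mathbb{S}_{q(\ell + D) - D} f(x^*(y), y)$ into $q$ ``tracking'' blocks of length $\ell$ and $q - 1$ ``gap'' blocks of length $D$: the tracing property bounds each tracking block by $\mathbb{S}_\ell f(x_j(y), S^{j(\ell + D)} y) + \ell \varepsilon$, and each gap block is crudely at most $D \|f\|$. Taking $\min$ over $x$ on the left, then $\max$ over $y$ (and noting that $S^{j(\ell + D)}$ is a bijection of $Y$, so $\max_y \min_x \mathbb{S}_\ell f(x, S^{j(\ell + D)} y) = c_\ell$ for every $j$), we arrive at the key inequality
$$ c_{q(\ell + D) - D} \;\leq\; q\, c_\ell + q \ell \varepsilon + (q - 1) D \|f\| . $$

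Dividing by $q(\ell + D) - D$ and letting $q \to \infty$ with $\ell, D, \varepsilon$ held fixed, and then using the Lipschitz bound $|c_{k} - c_{k'}| \leq |k - k'| \|f\|$ to transfer the inequality from the arithmetic progression $\{q(\ell+D) - D\}_q$ to all sufficiently large $k$, we obtain
$$ \limsup_{k \to \infty} \frac{c_k}{k} \;\leq\; \frac{c_\ell}{\ell + D} + \frac{\ell \varepsilon}{\ell + D} + \frac{D \|f\|}{\ell + D} . $$
Finally, send $\ell \to \infty$ along a subsequence realizing $m := \liminf_k c_k / k$: the right-hand side tends to $m + \varepsilon$, so $\limsup_k c_k / k \leq m + \varepsilon$, and since $\varepsilon > 0$ was arbitrary, $\limsup = \liminf$ and the limit exists. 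The principal obstacle is to keep the tracing error from compounding: iterating a two-piece gluing inequality would accumulate $\varepsilon$-errors multiplicatively in $q$, so it is essential to apply the specification property \emph{once} to all $q$ blocks simultaneously, which is exactly what the definition permits.
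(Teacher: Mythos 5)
Your argument is correct and is essentially the paper's: both apply the specification property once to a $D$-spaced specification built from block-wise minimizers to obtain approximate subadditivity of $c_k$ up to an error of order $k\varepsilon$ plus a gap term, and then run a Fekete-type limiting argument (the paper isolates this as Lemma \ref{Asymptotic Fekete} and allows unequal block lengths $k_1, \ldots, k_n$, while you work with equal blocks of length $\ell$ and carry out the limit inline). The only point to tidy is an indexing shift in the specification: as written, $x_j(y)$ minimizes $x \mapsto \mathbb{S}_\ell f\left(x, S^{j(\ell+D)} y\right)$, but the tracing condition compares $T^i x^*$ with $T^i x_j(y)$ for $i$ in the window $[j(\ell+D), j(\ell+D)+\ell-1]$, so the point fed into the specification should be $T^{-j(\ell+D)}$ of the minimizer (exactly as the paper writes $T^{-a_i} x_i$).
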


In order to prove Proposition \ref{Delta limit exists for spec property}, we use Lemma \ref{Asymptotic Fekete}, which expands upon Fekete's ``subadditivity lemma" (c.f. \cite{Fekete}).

\begin{Lem}\label{Asymptotic Fekete}
Consider a sequence $(r_k)_{k = 1}^\infty$ of real numbers such that for every $\epsilon > 0$ exists $K = K(\epsilon) \in \mathbb{N}$ with the following property: for every $k \in \mathbb{N}$, if $k_1, \ldots, k_n$ are natural numbers for which $k_1, \ldots, k_n \geq K$ and $k = k_1 + \cdots + k_n$, then
\begin{align*}
	r_{k}	& \leq r_{k_1} + \cdots + r_{k_n} + k \epsilon .
\end{align*}
Then $\lim_{k \to \infty} \frac{r_k}{k}$ exists.
\end{Lem}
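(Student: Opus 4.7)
The plan is to adapt the proof of Fekete's classical subadditivity lemma. In the usual setting ($r_{m+n} \leq r_m + r_n$), one fixes $m \in \mathbb{N}$, writes a large $k$ as $k = qm + s$ with $0 \leq s < m$, uses subadditivity to get $r_k \leq q r_m + r_s$, and then passes $k \to \infty$ followed by taking an infimum over $m$. The same blueprint will work here, modified to accommodate the weaker hypothesis: the pieces in the decomposition must all be of size $\geq K(\epsilon)$, and one pays an additive slack of $k \epsilon$.

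Given $\epsilon > 0$, let $K = K(\epsilon)$ be as in the hypothesis. For each fixed $m \geq K$ and each $k \geq 2m$, I would write $k = qm + s$ with $0 \leq s < m$ and $q \geq 2$, and decompose $k$ into $q$ pieces consisting of $q - 1$ copies of $m$ together with one copy of $m + s$ (all of which have size $\geq m \geq K$). Applying the hypothesis gives
$$ r_k \leq (q - 1) r_m + r_{m + s} + k \epsilon .$$
Dividing by $k$ and sending $k \to \infty$ with $m$ fixed, one uses that $(q - 1)/k \to 1/m$ and that $r_{m + s}/k \to 0$ (the index $s$ ranges over the finite set $\{0, 1, \ldots, m - 1\}$, so the values $r_{m + s}$ are bounded), to obtain
$$ \limsup_{k \to \infty} \frac{r_k}{k} \leq \frac{r_m}{m} + \epsilon \qquad \text{for every } m \geq K .$$

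To finish, I would pick, for any $\delta > 0$, an index $m \geq K$ with $r_m / m$ within $\delta$ of $\liminf_{k \to \infty} r_k / k$ (such $m$ exist by the definition of $\liminf$; in the exceptional case that the liminf is $-\infty$, one instead picks $m \geq K$ with $r_m / m$ arbitrarily negative and concludes $\limsup = -\infty$ directly). Substituting into the displayed inequality yields $\limsup_{k \to \infty} r_k / k \leq \liminf_{k \to \infty} r_k / k + \delta + \epsilon$, and letting $\delta, \epsilon \searrow 0$ gives $\limsup \leq \liminf$. Combined with the trivial reverse inequality, this shows that $\lim_{k \to \infty} r_k / k$ exists in $[-\infty, +\infty]$.

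I do not anticipate any serious obstacle: the only point requiring care is verifying that the decomposition $k = (q - 1) m + (m + s)$ uses only pieces of size $\geq K$, which is automatic once $m \geq K$, and that the decomposition contains at least one copy of $m$ (i.e., $q \geq 2$), which holds for all $k \geq 2 m$ and is harmless when taking a $\limsup$ in $k$. Everything else is the standard Fekete argument, with the additive slack $k \epsilon$ absorbed in the final step.
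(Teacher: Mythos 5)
Your proof is correct and follows essentially the same route as the paper's: both run the standard Fekete argument, decomposing a large $k$ into many blocks of a fixed length at least $K(\epsilon)$ plus one remainder block of bounded size, and both absorb the $k\epsilon$ slack at the end. The only (harmless) difference is that the paper chooses a single $K$ that simultaneously witnesses the hypothesis and nearly attains the liminf, whereas you keep the block length $m \geq K$ as a free parameter and optimize over it afterwards.
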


While other generalized subadditivity conditions exist in the literature which admit a version of Fekete's lemma (e.g. the De Bruijin-Erd\H{o}s condition, c.f. \cite{DeBruijinErdos, NearlySubadditive}), we know of no specific reference in the literature for Lemma \ref{Asymptotic Fekete}, so we sketch a proof here, even though it only deviates from the standard proof of Fekete's lemma in small and predictable ways.

\begin{proof}[Proof of Lemma \ref{Asymptotic Fekete}]
This result follows from a slightly modified version of the standard textbook proof of Fekete's lemma (c.f. \cite[Proposition 6.2.3]{TopologicalDimension}, to name only one example), so we only sketch our argument. Our goal is to prove that $\liminf_{k \to \infty} r_k / k = \limsup_{k \to \infty} r_k / k$, so choose $K \in \mathbb{N}$ large such that $r_K / K \approx \liminf_{k \to \infty} r_k / k$, and such that $r_k \lessapprox r_{k_1} + \cdots + r_{k_n}$ for $k_1, \ldots, k_n \geq K, k = k_1 + \cdots + k_n$. Consider $k \gg K$, and write
$$
k = \underbrace{K + \cdots + K}_{\textrm{$n$ times}} + (k - n K) ,
$$
where $n = \lfloor k / K \rfloor - 1$. This decomposes $k$ into a sum of integers $K, k - n K \in [K, 2 K - 1]$. Then $r_k / k \lessapprox r_K / K \approx \liminf_{k \to \infty} r_k / k$, meaning that $\limsup_{k \to \infty} r_k / k \leq \liminf_{k \to \infty} r_k / k$. The opposite inequality is trivial.
\end{proof}

\begin{proof}[Proof of Proposition \ref{Delta limit exists for spec property}]
It will suffice to prove that the sequence
$$r_k = \max_{y \in Y} \min_{x \in X} \mathbb{S}_k f \left( x , y \right)$$
satisfies the generalized subadditivity condition described in Lemma \ref{Asymptotic Fekete}, where $\mathbb{S}$ is as defined in Notation \ref{Def of average notation}. Fix $\epsilon > 0$.

Let $D_X : (0, + \infty) \to \mathbb{N}$ be a function such that every $D_X(\eta)$-spaced specification $\xi = \left( x_i, [a_i, b_i] \right)_{i = 1}^n$ on $X$ admits an $\eta$-tracing (with respect to $d_X$) of $\xi$. Choose $\eta > 0$ such that if $\mathbf{v}, \mathbf{w} \in X \times Y$, and $d_{X \times Y}(\mathbf{v}, \mathbf{w}) < \eta$, then $|f(\mathbf{v}) - f(\mathbf{w})| < \frac{\epsilon}{3}$, set $D = D_X(\eta)$, and choose $K \in \mathbb{N}$ such that
$$\frac{(D - 1) \|f\|}{K} < \frac{\epsilon}{3} $$
and $K > D$.
	
	Now fix $k \geq K$, and let $k_1, \ldots, k_n \geq K$ be such that $k = k_1 + \cdots + k_n$. Choose $y_0 \in Y$ such that
	\begin{align*}
		\max_{y \in Y} \min_{x \in X} \mathbb{S}_k f \left( x , y \right)	& = \min_{x \in X} \mathbb{S}_k f \left( x , y_0 \right) .
	\end{align*}
	For $i \in \{1, \ldots, n\}$, set
	\begin{align*}
		a_i	& = k_1 + \cdots + k_{i - 1} ,	& b_i	& = a_i + k_i - D ,
	\end{align*}
	where $a_1 = 0$, and choose $x_i \in X$ such that
	\begin{align*}
	\mathbb{S}_{k_i} f \left( x_i, S^{a_i} y_0 \right)	& = \min_{x \in X} \mathbb{S}_{k_i} f \left(x, S^{a_i} y_0 \right) \leq \max_{y \in Y} \min_{x \in X} \mathbb{S}_{k_i} f(x, y) = r_{k_i} .
	\end{align*}
	Consider the $D$-spaced specification $\xi = \left( T^{-a_i} x_i , [a_i, b_i] \right)_{i = 1}^n$, and let $x_0$ be an $\eta$-tracing of $\xi$. Then
	\begin{align*}
		r_k	& = \min_{x \in X} \mathbb{S}_{k} f(x, y_0) \\
			& \leq \mathbb{S}_k f(x_0, y_0) = \sum_{i = 1}^n \mathbb{S}_{[a_i, a_i + k_i - 1]} f \left( x_0 ,y_0 \right) = \sum_{i = 1}^n \left[ \left( \mathbb{S}_{[a_i, b_i]} f \left( x_0 ,y_0 \right) \right) + \left( \mathbb{S}_{[b_i + 1, b_i + D - 1]} f \left( x_0 ,y_0 \right) \right) \right] .
		\end{align*}
		We now isolate one estimate that will come up later in the computation as well. If $J_1, \ldots, J_n$ are integer intervals of length $\leq D - 2$, and $z_1, \ldots, z_n \in X \times Y$ for $i = 1, \ldots, n$ then
		\begin{align*}
		\addtocounter{equation}{1}\tag{\theequation}\label{Test estimate} \left| \sum_{i = 1}^n \mathbb{S}_{J_i} f \left( z_i \right) \right|	& \leq \sum_{i = 1}^n k_i \frac{|J_i| \cdot \|f\|}{k_i}	 \leq \sum_{i = 1}^n k_i \frac{(D - 1) \|f\|}{K}	\leq \sum_{i = 1}^n k_i \frac{\epsilon}{3}	= k \frac{\epsilon}{3} .
		\end{align*}
		Thus
		\begin{align*}
		r_k \leq	& \sum_{i = 1}^n \left[ \left( \mathbb{S}_{[a_i, b_i]} f \left( x_0 ,y_0 \right) \right) + \left( \mathbb{S}_{[b_i + 1, b_i + D - 1]} f \left( x_0 ,y_0 \right) \right) \right] \\
		[\textrm{inequality (\ref{Test estimate})}] \leq	& \sum_{i = 1}^n \left[ \mathbb{S}_{[a_i, b_i]} f \left( x_0 ,y_0 \right)  \right] + k \frac{\epsilon}{3} \\
		\leq	& \left[ \sum_{i = 1}^n \left( \mathbb{S}_{[a_i, b_i]} f \left( T^{-a_i} x_i, y_0 \right) + \frac{\epsilon}{3} \right) \right] + k \frac{\epsilon}{3} \\
		=	& \left[ \sum_{i = 1}^n \left( \mathbb{S}_{k_i - D} f \left( x_i, S^{a_i} y_0 \right) + \frac{\epsilon}{3} \right) \right] + k \frac{\epsilon}{3} \\
		\leq	& \left[ \sum_{i = 1}^n \mathbb{S}_{k_i - D} f \left( x_i, S^{a_i} y_0 \right) \right] + k \frac{2 \epsilon}{3} \\
		=	& \left[ \sum_{i = 1}^n \left( \left( \mathbb{S}_{k_i - 1} f \left( x_i, S^{a_i} y_0 \right) \right) - \left( \mathbb{S}_{[k_i - D + 1, k_i - 1]} f \left( x_i, S^{a_i} y_0 \right) \right) \right) \right] + k \frac{2 \epsilon}{3} \\
		[\textrm{inequality (\ref{Test estimate})}] \leq	& \left[ \sum_{i = 1}^n \mathbb{S}_{k_i - 1} f \left( x_i, S^{a_i} y_0 \right) \right] + k \epsilon \\
		=	& \left[ \sum_{i = 1}^n \min_{x \in X} \mathbb{S}_{k_i - 1} f \left( x, S^{a_i} y_0 \right) \right] + k \epsilon \\
		\leq	& \left[ \sum_{i = 1}^n \max_{y \in Y} \min_{x \in X} \mathbb{S}_{k_i - 1} f \left( x, y \right) \right] + k \epsilon \\
		=	& r_{k_1} + \cdots + r_{k_n} + k \epsilon .
	\end{align*}
This computation shows that $(r_k)_{k = 1}^\infty$ satisfies the conditions of Lemma \ref{Asymptotic Fekete}, from which it follows that the limit $ \lim_{k \to \infty} r_k / k = \lim_{k \to \infty} \max_{y \in Y} \min_{x \in X} \mathbb{A}_k f(x, y) $ exists.
\end{proof}

We note that \cite[Proposition 7]{IteratedOptimization} also provides sufficient conditions for the limit considered in Proposition \ref{Delta limit exists for spec property} to exist. At present, we know of no examples where this limit does not exist.

Our next major theorem of this section is the following.

\begin{Thm}\label{Identity for alpha and delta under periodic specification}
Let $(X, T), (Y, S)$ be topological dynamical systems, and consider $f \in C(X \times Y)$.
\begin{enumerate}[label = (\alph*)]
	\item If $(X, T), (Y, S)$ both have the periodic specification property, then $\alpha(f) \leq \delta(f)$.
	\item If $(Y, S)$ has the periodic specification property, then $\alpha(f) \geq \delta(f)$.
\end{enumerate}
In particular, if $(X, T), (Y, S)$ both have the periodic specification property, then $\alpha(f) = \delta(f)$.
\end{Thm}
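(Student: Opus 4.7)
The plan is to prove both (a) and (b) via one key identity: for any $y_0 \in \operatorname{Per}_S^q(Y)$ and $\nu_0 := \mathbb{A}_q \delta_{y_0}$, the adversarial integral $\psi_f(\nu_0)$ coincides with a \emph{classical} minimum ergodic integral on the power system $(X, T^q)$ relative to the function $g(x) := \mathbb{A}_q f(x, y_0)$. Any $T \times S$-invariant $\lambda$ projecting to $\nu_0$ must be supported on $X \times \{y_0, S y_0, \ldots, S^{q - 1} y_0\}$, and a standard disintegration puts such $\lambda$ in bijection with $\mu \in \mathcal{M}_{T^q}(X)$ via $\lambda = \frac{1}{q} \sum_{j = 0}^{q - 1} T^j_* \mu \otimes \delta_{S^j y_0}$, with $\int f \, \mathrm{d}\lambda = \int g \, \mathrm{d}\mu$. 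The classical $\alpha = \delta$ equality on $(X, T^q)$ (which holds for \emph{any} topological dynamical system by Fekete's lemma together with \cite[Proposition 2.1]{Jenkinson}), combined with the $q$-periodicity identity $\frac{1}{K} \sum_{n = 0}^{K - 1} g(T^{n q} x) = \mathbb{A}_{K q} f(x, y_0)$, yields
\begin{equation*}
\psi_f(\nu_0) \;=\; \min_{\mu \in \mathcal{M}_{T^q}(X)} \int g \, \mathrm{d}\mu \;=\; \lim_{K \to \infty} \min_{x \in X} \mathbb{A}_{K q} f(x, y_0).
\end{equation*}

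For (a), fix $\epsilon > 0$. Lemma \ref{Periodic measures are dense} applied to $(Y, S)$ together with the lower semi-continuity of $\psi_f$ (Lemma \ref{Lower semi-continuity}) produce some $\nu_0 = \mathbb{A}_q \delta_{y_0} \in \mathcal{M}_S^{co}(Y)$ with $\psi_f(\nu_0) > \alpha(f) - \epsilon$. The key identity then gives $\min_{x \in X} \mathbb{A}_{K q} f(x, y_0) > \alpha(f) - 2 \epsilon$ for $K$ sufficiently large, so $\max_{y \in Y} \min_{x \in X} \mathbb{A}_{K q} f(x, y) > \alpha(f) - 2 \epsilon$; taking $\limsup_{K \to \infty}$ yields $\delta(f) \geq \alpha(f) - 2 \epsilon$, and letting $\epsilon \to 0$ finishes.

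For (b), fix $\epsilon > 0$; by definition of $\delta(f)$ as a $\limsup$, there exist arbitrarily large $k$ and $y_k \in Y$ with $\min_{x \in X} \mathbb{A}_k f(x, y_k) > \delta(f) - \epsilon$. Use uniform continuity of $f$ to choose $\eta > 0$ so that $d_Y(y, y') < \eta$ implies $|f(x, y) - f(x, y')| < \epsilon$ for every $x \in X$, let $D := D_Y(\eta)$ come from the periodic specification of $(Y, S)$, and invoke specification to find $y_k' \in \operatorname{Per}_S^{k + D}(Y)$ that $\eta$-traces $y_k$ on $[0, k - 1]$. Setting $\nu_k := \mathbb{A}_{k + D} \delta_{y_k'}$, the same disintegration identity (now at period $k + D$) gives $\psi_f(\nu_k) \geq \min_{x \in X} \mathbb{A}_{k + D} f(x, y_k')$, and splitting the $[0, k + D - 1]$ window into the tracked portion $[0, k - 1]$ and the $D$-term gap $[k, k + D - 1]$ yields, for every $x \in X$,
\begin{equation*}
\mathbb{A}_{k + D} f(x, y_k') \;\geq\; \frac{k}{k + D} \mathbb{A}_k f(x, y_k) - \frac{k \epsilon + D \|f\|}{k + D}.
\end{equation*}
Taking the minimum over $x$ and choosing $k$ large relative to $D$ and $\|f\| / \epsilon$ gives $\psi_f(\nu_k) > \delta(f) - 3 \epsilon$, whence $\alpha(f) \geq \psi_f(\nu_k) > \delta(f) - 3 \epsilon$; letting $\epsilon \to 0$ finishes.

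The main obstacle is establishing the disintegration identity and rigorously justifying the reduction to the classical ergodic minimum on $(X, T^q)$; once that is in hand, both parts become careful error tracking. I note that my argument for (a) only uses the periodic specification of $(Y, S)$, not of $(X, T)$, so it actually proves a slightly stronger statement than claimed; the hypothesis on $(X, T)$ would become essential if one instead routed the proof through the chain $\alpha = \alpha_{per}$ from Lemma \ref{Some periodic identities}(a)-(b), which requires both periodic specification properties to identify $\alpha$ with its periodic counterpart.
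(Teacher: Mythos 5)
Your proof of (b) is essentially the paper's own argument: use periodic specification of $(Y, S)$ to replace the near-optimal $y_k$ by a periodic tracing $y_k'$, pass to the closed-orbit measure $\nu_k$, and invoke the disintegration of Lemma \ref{Decomposition for marginal preimage of closed orbit measures} to bound $\psi_f(\nu_k)$ below by $\min_{x \in X}$ of a finite average; the only differences are bookkeeping (you trace on $[0, k-1]$ with period $k + D$ — the definition actually yields period $k - 1 + D$, a harmless off-by-one — whereas the paper traces on $[0, k - D]$ with period $k$). Your proof of (a), however, is a genuinely different route, and a stronger one. The paper goes $\alpha \leq \alpha_{per} = \beta_{per} \leq \delta$, and the final step — replacing $\inf_{x \in \operatorname{Per}_T(X)}$ by $\min_{x \in X}$ — is exactly where the periodic specification of $(X, T)$ gets consumed. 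You instead never restrict the minimization in $X$ to periodic points: for $\nu_0 = \mathbb{A}_q \delta_{y_0}$ the disintegration identifies $\psi_f(\nu_0)$ with a classical minimizing ergodic optimization of $g = \mathbb{A}_q f(\cdot, y_0)$ on the power system $(X, T^q)$, where the classical $\alpha = \delta$ identity holds with no hypotheses, and the $q$-periodicity of $y_0$ converts the resulting limit into $\lim_{K \to \infty} \min_{x \in X} \mathbb{A}_{Kq} f(x, y_0) \leq \delta(f)$; density of closed-orbit measures plus lower semi-continuity of $\psi_f$ (Lemmas \ref{Periodic measures are dense} and \ref{Lower semi-continuity}, which use only the hypothesis on $(Y, S)$) then give $\alpha(f) \leq \delta(f)$. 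Your closing observation is correct: this establishes $\alpha(f) \leq \delta(f)$, and hence $\alpha(f) = \delta(f)$, under periodic specification of $(Y, S)$ alone, which strictly improves the theorem as stated (and is consistent with Proposition \ref{Alpha delta counterexample}, whose $(Y, S)$ lacks dense periodic points). The one ingredient you flag as the main obstacle — the bijection $\lambda \leftrightarrow \mu$ and the identity $\int f \, \mathrm{d}\lambda = \int g \, \mathrm{d}\mu$ — is precisely the paper's Lemma \ref{Decomposition for marginal preimage of closed orbit measures}, so nothing is missing.
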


First, we prove the following lemma which will be useful to prove Theorem \ref{Identity for alpha and delta under periodic specification}.

\begin{Lem}\label{Decomposition for marginal preimage of closed orbit measures}
	Let $y_0 \in \operatorname{Per}_S^{s_0}(Y)$, and write $\nu = \mathbb{A}_{s_0} \delta_{y_0} \in \mathcal{M}_S^{co}(Y)$. Then
	\begin{align*}
		\mathcal{M}_{T \times S}(X \times Y) \cap (\pi_Y)_*^{-1} \{\nu\}	& = \left\{ \mathbb{A}_{s_0} \left( \mu \times \delta_{y_0} \right) : \mu \in \mathcal{M}_{T^{s_0}}(X) \right\} .
	\end{align*}
Consequently, we have for all $f \in C(X \times Y)$ that
\begin{align*}
\psi_f(\nu)	& \geq \min_{x \in X} \frac{1}{s_0} \sum_{j = 0}^{s_0 - 1} f \left( T^j x, S^j y_0 \right) .
\end{align*}
\end{Lem}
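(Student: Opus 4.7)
The proof plan has two pieces: first establish the set equality by double containment, then derive the stated inequality as an immediate consequence.

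For the containment $\supseteq$, I would start with an arbitrary $\mu \in \mathcal{M}_{T^{s_0}}(X)$ and verify directly that $\lambda := \mathbb{A}_{s_0}(\mu \times \delta_{y_0})$ has the two required properties. Expanding, $\lambda = \frac{1}{s_0} \sum_{j=0}^{s_0 - 1} T^j_* \mu \times \delta_{S^j y_0}$. Applying $(T \times S)_*$ shifts the index by one and produces the term $T^{s_0}_* \mu \times \delta_{S^{s_0} y_0} = \mu \times \delta_{y_0}$, where the key step uses $T^{s_0}$-invariance of $\mu$ together with $s_0$-periodicity of $y_0$ to close the cycle. Projecting onto $Y$ gives $\frac{1}{s_0} \sum_{j} \delta_{S^j y_0} = \nu$.

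The containment $\subseteq$ is the slightly more substantive step. Given $\lambda \in \mathcal{M}_{T \times S}(X \times Y)$ with $(\pi_Y)_* \lambda = \nu$, the support of $\nu$ is the finite set $\{S^j y_0 : 0 \le j < s_0\}$, so $\lambda$ is concentrated on $X \times \{y_0, Sy_0, \ldots, S^{s_0 - 1} y_0\}$. A disintegration along the $Y$-coordinate then yields probability measures $\mu_0, \ldots, \mu_{s_0 - 1}$ on $X$ with $\lambda = \frac{1}{s_0} \sum_{j=0}^{s_0-1} \mu_j \times \delta_{S^j y_0}$. Writing out $(T \times S)_* \lambda = \lambda$ and matching coefficients on each atom $\delta_{S^j y_0}$ forces $T_* \mu_j = \mu_{(j+1) \bmod s_0}$, so $\mu_j = T^j_* \mu_0$ and $T^{s_0}_* \mu_0 = \mu_0$. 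Hence $\lambda = \mathbb{A}_{s_0}(\mu_0 \times \delta_{y_0})$ with $\mu_0 \in \mathcal{M}_{T^{s_0}}(X)$.

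For the inequality, define $g \in C(X)$ by $g(x) = \frac{1}{s_0} \sum_{j = 0}^{s_0 - 1} f(T^j x, S^j y_0)$. For any $\lambda = \mathbb{A}_{s_0}(\mu \times \delta_{y_0}) \in \mathcal{M}_{T \times S}(X \times Y) \cap (\pi_Y)_*^{-1}\{\nu\}$, Fubini gives $\int f \, \mathrm{d}\lambda = \int g \, \mathrm{d}\mu \ge \min_{x \in X} g(x)$. Taking the infimum over such $\lambda$ (which is attained by compactness, hence equals $\psi_f(\nu)$) produces the desired bound.

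The only subtle point is the uniqueness-of-disintegration step in the $\subseteq$ direction; that is where one must be careful that matching atoms under $(T \times S)_*$ genuinely determines $\mu_j$ from $\mu_{j-1}$, but since the atoms $\delta_{S^j y_0}$ are distinct this is routine. Everything else is formal manipulation, so I do not expect any serious obstacle.
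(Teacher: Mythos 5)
Your proposal is correct and follows essentially the same route as the paper: disintegration of $\lambda$ over the finite orbit of $y_0$ with the invariance relation $T_*\mu_j = \mu_{j+1}$ for the forward inclusion, a direct verification for the reverse inclusion, and reduction of $\psi_f(\nu)$ to a minimum over $\mu \in \mathcal{M}_{T^{s_0}}(X)$ for the inequality. No issues.
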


\begin{proof}
We prove the first claim by proving inclusions in each direction. The forward inclusion
$\mathcal{M}_{T \times S}(X \times Y) \cap (\pi_Y)_*^{-1} \{\nu\} \subseteq \left\{ \mathbb{A}_{s_0} \left( \mu \times \delta_{y_0} \right) : \mu \in \mathcal{M}_{T^{s_0}}(X) \right\} $ can be proven by considering the disintegration of $\lambda \in \mathcal{M}_{T \times S} (X \times Y) \cap (\pi_Y)_*^{-1} \{\nu\}$ relative to $\pi$ (c.f. \cite[Chapter 5]{VianaOliveira}), i.e. $\lambda = \frac{1}{s_0} \sum_{j = 0}^{s_0 - 1} \mu_{S^j y_0} \times \delta_{S^j y_0}$, and prove that $\mu_{S^{j} y_0} = T_*^j \mu_{y_0}$ for $j \in \mathbb{Z}$. To prove the opposite inclusion, i.e. that a measure of the form $\mathbb{A}_{s_0} \left( \mu \times \delta_{y_0} \right)$ for $\mu \in \mathcal{M}_{T^{s_0}} (X)$ is $(T \times S)$-invariant and has $\nu$ as a marginal, is a routine calculation. Finally, to verify the inequality, consider $f \in C(X \times Y)$. Then
\begin{align*}
\psi_f(\nu)	& = \min_{\mu \in \mathcal{M}_{S^{s_0}}(X)} \int \mathbb{A}_{s_0} f(x, y_0) \mathrm{d} \mu(x) \geq \min_{x \in X} \mathbb{A}_{s_0} f(x, y_0) .
\end{align*}
\end{proof}

\begin{proof}[Proof of Theorem \ref{Identity for alpha and delta under periodic specification}]
	Throughout this proof, we will denote by $D_X : (0, + \infty) \to \mathbb{N}$ a function such that every $D_X(\eta)$-spaced specification $\xi = \left( x_i, [a_i, b_i] \right)_{i = 1}^n$ admits an $\eta$-tracing (with respect to $d_X$) of $\xi$ with period $b_n - a_1 + D_X(\eta)$. We use $D_Y$ analogously.
	
	\begin{subproof}[Proof of claim (a)]
		We know from Lemma \ref{Some periodic identities} that if $(Y, S)$ has the periodic specification property, then $\alpha(f) \leq \alpha_{per}(f) = \beta_{per}(f)$, so we'll show that $\beta_{per}(f) \leq \delta(f)$. Fix $y_0 \in \operatorname{Per}_S^{s_0}(Y)$, and set $s_n = s_0 n !$ for $n \in \mathbb{N}$, noting that $\operatorname{Per}_T^{s_1}(X) \subseteq \operatorname{Per}_T^{s_2} (X) \subseteq \cdots$ and $\operatorname{Per}_T(X) = \bigcup_{n = 1}^\infty \operatorname{Per}_T^{s_n}(X)$. Thus
		\begin{align*}
		\inf_{x \in \operatorname{Per}_T(X) } \lim_{k \to \infty} \mathbb{A}_k f(x, y_0) = \lim_{N \to \infty} \min_{x \in \operatorname{Per}_T^{s_N}(X)} \lim_{k \to \infty} \mathbb{A}_k f(x, y_0)	& = \lim_{N \to \infty} \min_{x \in \operatorname{Per}_T^{s_N}(X)} \mathbb{A}_{s_N} f(x, y_0) .
		\end{align*}
		We want to show that
		\begin{align*}
		\min_{x \in \operatorname{Per}_T^{s_N}(X)} \mathbb{A}_{s_N} f(x, y_0)	& \approx \min_{x \in X} \mathbb{A}_{s_N} f(x, y_0)	& \textrm{for large $N \in \mathbb{N}$.}
		\end{align*}
		Fix $\epsilon > 0$, and choose $\eta > 0$ such that if $\mathbf{v}, \mathbf{w} \in X \times Y$, and $d_{X \times Y}(\mathbf{v}, \mathbf{w}) < \eta$, then $|f(\mathbf{v}) - f(\mathbf{w})| < \epsilon / 2$. Set $D = D_X(\eta)$. For each $N \in \mathbb{N}$, choose $x_N \in X$ such that
		\begin{align*}
		 \min_{x \in X} \mathbb{A}_{s_N} f(x, y_0)	& = \mathbb{A}_{s_N} f(x_N, y_0) .
		\end{align*}
		Let $\xi^{(N)}$ be the specification
		$$\xi^{(N)} = \left( x_N, [0, s_N - D] \right) ,$$
		and choose $x^{(N)}$ to be an $\eta$-tracing of $\xi^{(N)}$ with period $s_N - D + D = s_N$. Then
		\begin{align*}
			& \min_{x \in \operatorname{Per}_T^{s_N}(X)} \mathbb{A}_{s_N} f(x, y_0) \\
			\leq	& \mathbb{A}_{s_N} f \left( x^{(N)}, y_0 \right) \\
			=	& \frac{s_N - D + 1}{s_N} \left( \mathbb{A}_{s_N - D + 1} f \left( x^{(N)}, y_0 \right) \right) + \frac{D - 1}{s_N} \left( \mathbb{A}_{[s_N - D , s_N - 1]} f \left( x^{(N)}, y_0 \right) \right) \\
			\leq	& \frac{s_N - D + 1}{s_N} \left( \mathbb{A}_{s_N - D + 1} f \left( x^{(N)}, y_0 \right) \right) + \frac{(D - 1) \|f\|}{s_N} \\
			\leq	& \frac{s_N - D + 1}{s_N} \left( \mathbb{A}_{s_N - D + 1} f \left( x_N, y_0 \right) + \frac{\epsilon}{2} \right) + \frac{(D - 1) \|f\|}{s_N} \\
			\leq	& \frac{s_N - D + 1}{s_N} \left( \mathbb{A}_{s_N - D + 1} f \left( x_N, y_0 \right) \right) + \frac{\epsilon}{2} + \frac{(D - 1) \|f\|}{s_N} \\
			=	& \left[ \mathbb{A}_{s_N} f(x_N, y_0) \right] - \frac{D - 1}{s_N} \left( \mathbb{A}_{[s_N - D + 1, s_N - 1]} f \left( x_N , y_0 \right) \right) + \frac{\epsilon}{2} + \frac{(D - 1) \|f\|}{s_N} \\
			\leq	& \left[ \min_{x \in X} \mathbb{A}_{s_N} f(x, y_0) \right] + \frac{\epsilon}{2} + \frac{2 (D - 1) \|f\|}{s_N} .
		\end{align*}
		Thus for $N \geq \max \left\{ D , \frac{4 (D - 1) \|f\|}{s_0 \epsilon} \right\}$, we have that
		\begin{align*}
			\inf_{x \in \operatorname{Per}_T(X)} \lim_{k \to \infty} \mathbb{A}_k f(x, y_0)	& \leq \min_{x \in \operatorname{Per}_T^{s_N}(X)} \mathbb{A}_{s_N} f(x, y_0) \\
			& \leq \min_{x \in X} \mathbb{A}_{s_N} f(x, y_0) + \epsilon \\
			\Rightarrow \sup_{y \in \operatorname{Per}_S^{s_0}(Y)} \inf_{x \in \operatorname{Per}_T(X)} \lim_{k \to \infty} \mathbb{A}_k f(x, y)	& \leq \sup_{y \in \operatorname{Per}_S^{s_0}(Y)} \min_{x \in X} \mathbb{A}_{s_N} f(x, y) + \epsilon \\
			& \leq \max_{y \in Y} \min_{x \in X} \mathbb{A}_{s_N} f(x, y) + \epsilon \\
			\Rightarrow \sup_{y \in \operatorname{Per}_S^{s_0}(Y)} \inf_{x \in \operatorname{Per}_T(X)} \lim_{k \to \infty} \mathbb{A}_k f(x, y)	& \leq \limsup_{N \to \infty} \max_{y \in Y} \min_{x \in X} \mathbb{A}_{s_N} f(x, y) + \epsilon \\
			& \leq \limsup_{k \to \infty} \max_{y \in Y} \min_{x \in X} \mathbb{A}_k f(x, y) + \epsilon \\
			& = \delta(f) + \epsilon .
		\end{align*}
		Therefore
		\begin{align*}
		\alpha(f) \leq \beta_{per}(f) = \sup_{s_0 \in \mathbb{N}} \sup_{y \in \operatorname{Per}_S^{s_0}(Y)} \inf_{x \in \operatorname{Per}_T(X)} \lim_{k \to \infty} \mathbb{A}_k f(x, y) \leq \delta(f) + \epsilon .
		\end{align*}
		Taking $\epsilon \searrow 0$ yields the inequality $\alpha(f) \leq \delta(f)$.
	\end{subproof}
	
	\begin{subproof}[Proof of claim (b)]
		Fix $\epsilon > 0$, and for each $k \in \mathbb{N}$, choose $y_k \in Y$ such that
		\begin{align*}
			\max_{y \in Y} \min_{x \in X} \mathbb{A}_k f(x, y) = \min_{x \in X} \mathbb{A}_k f(x, y_k) .
		\end{align*}
		Choose $\eta > 0$ such that if $\mathbf{v}, \mathbf{w} \in X \times Y$, and $d_{X \times Y}(\mathbf{v}, \mathbf{w}) < \eta$, then $|f(\mathbf{v}) - f(\mathbf{w})| < \epsilon$. Set $D = D_Y(\eta)$.
		
		For $k \geq D$, consider the specification in $(Y, S)$ given by
		\begin{align*}
		\xi^{(k)}	& = \left( y_k , [0, k - D] \right) .
		\end{align*}
		For each $k \in \mathbb{N}$, let $y^{(k)}$ be a $k$-periodic $\eta$-tracing of $\xi^{(k)}$. Write $\nu^{(k)} = \mathbb{A}_k \delta_{y^{(k)}} \in \mathcal{M}_S(Y)$. By Lemma \ref{Decomposition for marginal preimage of closed orbit measures} have for $k \geq D$ that
		\begin{align*}
		\alpha(f)	& = \sup_{\nu \in \mathcal{M}_S(Y)} \psi_f(\nu) \geq \psi_f \left( \nu^{(k)} \right) \geq \min_{x \in X} \mathbb{A}_k f \left( x, y^{(k)} \right) .
		\end{align*}
		For $x \in X$, we can estimate
		\begin{align*}
		\mathbb{A}_k f \left( x, y^{(k)} \right) =	& \left[ \mathbb{A}_k f \left( x, y_k \right) \right] + \frac{k - D + 1}{k} \left[ \mathbb{A}_{k - 1 + D} \left( f \left( x, y^{(k)} \right) - f \left( x, y_k \right) \right) \right] \\
			& + \frac{D - 1}{k} \left[ \mathbb{A}_{[k - D + 1, k - 1]} \left( f \left( x, y^{(k)} \right) f \left( x, y_k \right) \right) \right] \\
			\geq	& \mathbb{A}_k f \left( x, y_k \right) - \epsilon - \frac{(D - 1) \|f\|}{k} .
		\end{align*}
		Thus for $k \geq \max \left\{ \frac{(D - 1) \|f\|}{\epsilon}, D \right\}$, we have that
		\begin{align*}
		\min_{x \in X} \mathbb{A}_k f \left( x, y^{(k)} \right)	& \geq \min_{x \in X} \mathbb{A}_k f \left( x, y_k \right) - 2 \epsilon \\
			& = \max_{y \in Y} \min_{x \in X} \mathbb{A}_k f \left( x, y \right) - 2 \epsilon \\
		\Rightarrow \alpha(f)	& \geq \delta(f) - 2 \epsilon \\
		\stackrel{\epsilon \searrow 0}{\Rightarrow} \alpha(f)	& \geq \delta(f) .
		\end{align*}
	\end{subproof}
\end{proof}

\begin{Rmk}
While we will continue to use these kinds of specification arguments throughout this section, we will usually abbreviate them from here on. Since they all tend to have very similar ``steps," using specification to approximate one orbit with another that has some desired properties, and showing that these orbits are ``close" insofar as their ergodic averages are similar, we will usually use $O(\cdot)$ notation to denote the error terms that arise instead of making the error estimates explicit.
\end{Rmk}

Our next result shows that under certain specification hypotheses on $(Y, S)$, we can relate $\gamma$ and $\delta$ to each other.

\begin{Thm}\label{Delta gamma inequality}
Consider topological dynamical systems $(X, T), (Y, S)$ and a continuous function $f \in C(X \times Y)$.
\begin{enumerate}[label=(\alph*)]
	\item If $(Y, S)$ has the specification property, then $\delta(f) \leq \gamma(f)$.
	\item If $(X, T)$ has the specification property, then $\delta(f) \geq \gamma(f)$.
\end{enumerate}
\end{Thm}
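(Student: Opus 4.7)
The plan is to prove both parts via specification-based concatenation constructions, analogous to the proof of Theorem \ref{Identity for alpha and delta under periodic specification}. In each case, I use specification on the relevant system to glue together ``optimizer orbits'' of finite length into a single orbit whose infinite-time averages witness the desired inequality.

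For part (a), I choose a subsequence $(k_n)_{n=1}^\infty$ along which $\max_y \min_x \mathbb{A}_{k_n} f(x, y) \to \delta(f)$, and let $y_n \in Y$ attain the maximum. Along a further subsequence in which $k_n$ grows so rapidly that $\sum_{i<n} k_i = o(k_n)$, I invoke specification on $(Y, S)$ to obtain $y^* \in Y$ that $\eta$-traces the specification $(y_n, [T_n, T_n+k_n-1])_{n=1}^\infty$, with $T_n = (n-1)D + \sum_{i<n} k_i$. For any $x \in X$ and $K_n = T_n + k_n$, the decomposition of $\mathbb{A}_{K_n} f(x, y^*)$ into an initial segment of length $T_n$ (contributing at most $\tfrac{T_n}{K_n}\|f\| = o(1)$) plus the trace segment of length $k_n$ (on which, by the tracing property and uniform continuity of $f$, $\mathbb{A}_{k_n} f(T^{T_n} x, S^{T_n} y^*) \geq \min_{x'} \mathbb{A}_{k_n} f(x', y_n) - O(\eta) \to \delta(f) - O(\eta)$) gives $\limsup_K \mathbb{A}_K f(x, y^*) \geq \delta(f) - O(\eta)$ for every $x \in X$. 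Hence $\gamma(f) \geq \inf_x \limsup_K \mathbb{A}_K f(x, y^*) \geq \delta(f) - O(\eta)$, and sending $\eta \to 0$ finishes (a).

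For part (b), the analogous construction requires a subtler block scheme: rather than lacunary lengths, I use many fixed-size blocks of a single large length $k$, in order to control not just $\mathbb{A}_K$ at block endpoints but uniformly over the entire orbit of $x^*$. Fix $\epsilon > 0$; by Proposition \ref{Delta limit exists for spec property}, $\lim_k \max_y \min_x \mathbb{A}_k f(x, y) = \delta(f)$, so I choose $k$ with $\max_y \min_x \mathbb{A}_k f(x, y) \leq \delta(f) + \epsilon$ and $k \geq D\|f\|/\epsilon$. Fix $y_0 \in Y$ with $\inf_x \limsup_\ell \mathbb{A}_\ell f(x, y_0) \geq \gamma(f) - \epsilon$, set $T_n = (n-1)(k+D)$, and choose $\tilde x_n \in X$ minimizing $\mathbb{A}_k f(\cdot, S^{T_n} y_0)$, so $\mathbb{A}_k f(\tilde x_n, S^{T_n} y_0) \leq \delta(f) + \epsilon$. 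Via specification on $(X, T)$ applied to finite truncations $(\tilde x_n, [T_n, T_n+k-1])_{n=1}^N$ and a compactness limit, I obtain $x^* \in X$ that $\eta$-traces $\tilde x_n$ on $[T_n, T_n+k-1]$ for every $n$.

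Summing block contributions and $D$-gaps, for $K_N = T_N + k$ one computes $\mathbb{A}_{K_N} f(x^*, y_0) \leq \delta(f) + 2\epsilon + O(\eta)$ once $N$ is large (using $k \geq D\|f\|/\epsilon$); the key point is that the \emph{same} bound holds uniformly for $K = T_N + L$ with $L \in [0, k+D)$, since the decomposition $\mathbb{A}_K = \tfrac{T_N}{K}\mathbb{A}_{T_N} + \tfrac{K-T_N}{K}\mathbb{A}_{K-T_N} f(T^{T_N} x^*, S^{T_N} y_0)$ has $T_N/K \to 1$ and $(K-T_N)/K \leq (k+D)/T_N \to 0$ as $N \to \infty$ (since $k$ is fixed), making middle-of-block contributions negligible. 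Hence $\limsup_K \mathbb{A}_K f(x^*, y_0) \leq \delta(f) + 2\epsilon + O(\eta)$; combining with $\limsup_K \mathbb{A}_K f(x^*, y_0) \geq \inf_x \limsup_K \mathbb{A}_K f(x, y_0) \geq \gamma(f) - \epsilon$ yields $\gamma(f) \leq \delta(f) + 3\epsilon + O(\eta)$, whence $\eta \to 0$, $\epsilon \to 0$ completes (b). The main subtlety is that part (b) requires uniform block control (rather than just at block endpoints), which forces the many-small-blocks scheme in place of the lacunary scheme of part (a).
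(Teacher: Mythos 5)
Your proof is correct, and part (b) is essentially the paper's argument: fixed block length $k$ chosen via Proposition \ref{Delta limit exists for spec property}, blockwise minimizers glued by specification on $(X,T)$ through finite truncations plus a compactness limit, and the observation that every time $K$ lies within $k + D$ of a block endpoint, so the $\limsup$ upper bound holds along the full sequence. Part (a), however, takes a genuinely different route. The paper reuses the many-fixed-size-blocks scheme there too: it fixes one large $K$ with $\max_y\min_x \mathbb{A}_K f > \delta(f) - \epsilon$, concatenates infinitely many copies of the shifted optimizer $S^{-a_i}y_0$, and controls $\mathbb{A}_k f(x, y_0')$ for all large $k$ up to errors $O(1/K) + O(K/k)$. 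You instead use a lacunary scheme with $\sum_{i<n}k_i = o(k_n)$, so that only the final block matters along the subsequence $K_n = T_n + k_n$; this exploits the fact that a lower bound on a $\limsup$ only requires goodness along a subsequence, and it buys you a one-parameter argument (no $k \gg K \gg 1$ juggling) at the cost of working with blocks of unbounded length. Your explicit remark that the lacunary scheme cannot work for part (b) -- where one needs an upper bound on the $\limsup$, hence control at every time -- is exactly the right diagnosis and is why the paper uses the uniform scheme in both parts. Two small bookkeeping points: in part (a) you apply specification to an infinite specification, which the definition does not directly cover, so you need the same finite-truncation-plus-compactness step (with $\eta/2$ in place of $\eta$) that you spell out in part (b) and that the paper performs in both parts; and the specified points in part (a) should be the shifted optimizers $S^{-T_n}y_n$ rather than $y_n$ so that the traced segment of $y^*$ starting at time $T_n$ actually reads as $y_n\vert_0^{k_n - 1}$, matching your displayed inequality. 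Neither affects the validity of the argument.
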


\begin{proof}
Let $D_X : (0, + \infty) \to \mathbb{N}$ be a function such that every $D_X(\eta)$-spaced specification $\xi = \left( x_i, [a_i, b_i] \right)_{i = 1}^n$ admits an $\eta$-tracing (with respect to $d_X$) of $\xi$. We define $D_Y$ analogously.

\begin{subproof}[Proof of claim (a)]
	Choose $\eta > 0$ such that if $\mathbf{v}, \mathbf{w} \in X \times Y$, and $d_{X \times Y}(\mathbf{v}, \mathbf{w}) < \eta$, then $|f(\mathbf{v}) - f(\mathbf{w})| < \epsilon / 2$. Fix $D = D_Y(\eta / 2)$.
	
	Choose $K \in \mathbb{N}$ such that
	\begin{align*}
		\max_{y \in Y} \min_{x \in X} \mathbb{A}_K f(x, y)	& > \delta(f) - \epsilon ,	& K \geq	& \frac{(D - 1) \|f\|}{\epsilon},
	\end{align*}
	and set $y_0 \in Y$ such that
	\begin{align*}
		\max_{y \in Y} \min_{x \in X} \mathbb{A}_K f(x, y)	& = \min_{x \in X} \mathbb{A}_K f(x, y_0) .
	\end{align*}
	Note that $K \in \mathbb{N}$ can be taken arbitrarily large.
	
	Consider the specifications $\xi^{(n)} = \left( y_0 , [a_i, b_i] \right)_{i = 1}^{n}$, where
	\begin{align*}
		y_i	& = S^{-a_i} y_0 ,	& a_i	& = (i - 1) (K + D - 1),	& b_i	& = a_i + K - 1 .
	\end{align*}
	For each $n \in \mathbb{N}$, take $y^{(n)} \in Y$ to be a $\frac{\eta}{2}$-tracing of $\xi^{(n)}$. Because $Y$ is compact, there exists a subsequence of $\left( y^{(n)} \right)_{n = 1}^\infty$ converging to a point $y_0' \in Y$. Then $y_0'$ is an $\eta$-tracing of $\xi^{(n)}$ for all $n \in \mathbb{N}$, meaning that if $x_0 \in X, i \in \mathbb{N}, J_i = [a_i, b_i]$, then
	\begin{align*}
		\mathbb{A}_{J_i} f \left( x_0 , y_0' \right) \geq \mathbb{A}_{J_i} f \left( x_0 , S^{- a_i} y_0 \right) - \epsilon = \mathbb{A}_K f \left( T^{a_i} x_0, y_0 \right) - \epsilon	& \geq \left[ \min_{x \in X} \mathbb{A}_K f (x, y_0) \right] - \epsilon \\
			& \geq \delta(f) - 2 \epsilon . \addtocounter{equation}{1}\tag{\theequation}\label{Delta estimate}
	\end{align*}
	
	Fix $x \in X$. Then for $k \geq K$, we have
	\begin{align*}
		\mathbb{A}_k f \left( x, y_0' \right)	= & \frac{1}{k} \left[ \sum_{i = 1}^{\lfloor k / (K + D - 1) \rfloor} \mathbb{S}_{J_i} f \left( x , y_0 \right) \right] + O(1 / K) + O(K / k) \\
		[\textrm{inequality (\ref{Delta estimate})}] \geq	& \frac{1}{k} \left[ \sum_{i = 1}^{\lfloor k / (K + D - 1) \rfloor} K \left( \delta(f) - 2 \epsilon \right) \right] + O(1 / K) + O(K / k) \\
		=	& \delta(f) - 2 \epsilon + O(1 / K) + O(K / k) .
	\end{align*}
	In other words, there exist constants $A, B$ depending on $(X, T), (Y, S), f, \epsilon$ such that
	\begin{align*}
		\mathbb{A}_k f \left( x, y_0' \right)	& \geq \delta(f) - 2 \epsilon + A \frac{1}{K} + B \frac{K}{k} 
	\end{align*}
	for sufficiently large $k \in \mathbb{N}$. Therefore, taking $k \gg K \gg 1$, we can ensure that $\mathbb{A}_k f \left( x, y_0' \right) \geq \delta(f) - 3 \epsilon$ for sufficiently large $k \in \mathbb{N}$. Since this inequality holds for all sufficiently large $k \in \mathbb{N}$, it follows that
	\begin{align*}
		\limsup_{k \to \infty} \mathbb{A}_k f \left( x, y_0' \right)	& \geq \delta(f) - 3 \epsilon \\
		\Rightarrow \inf_{x \in X} \limsup_{k \to \infty} \mathbb{A}_k f \left( x, y_0' \right)	& \geq \delta(f) - 3 \epsilon .
	\end{align*}
	But
	\begin{align*}
		\gamma(f) = \sup_{y \in Y} \inf_{x \in X} \limsup_{k \to \infty} \mathbb{A}_k f(x, y) \geq \inf_{x \in X} \limsup_{k \to \infty} \mathbb{A}_k f \left( x, y_0' \right) \geq \delta(f) - 3 \epsilon ,
	\end{align*}
	and since this last inequality holds for all $\epsilon > 0$, we can take $\epsilon \searrow 0$ to conclude that $\gamma(f) \geq \delta(f)$.
\end{subproof}

\begin{subproof}[Proof of claim (b)]
Fix $\epsilon > 0 , y_0 \in Y$. By Proposition \ref{Delta limit exists for spec property}, there exists $K_1 \in \mathbb{N}$ such that if $k \geq K_1$, then $\max_{y \in Y} \min_{x \in X} \mathbb{A}_k f(x, y) < \delta(f) + \epsilon$.

Choose $\eta > 0$ such that if $\mathbf{v}, \mathbf{w} \in X \times Y$, and $d_{X \times Y}(\mathbf{v}, \mathbf{w}) < \eta$, then $|f(\mathbf{v}) - f(\mathbf{w})| < \epsilon$. Set $D = D_X(\eta / 2)$.

Consider $K \in \mathbb{N}, K \gg K_1$. For $i \in \mathbb{N}$, choose $x_i \in X$ such that
$$\min_{x \in X} \mathbb{A}_{[K(i - 1), K i - 1]} f \left( x, y_0 \right) = \mathbb{A}_{[K(i - 1), K i - 1]} f \left( x_i , y_0 \right) .$$
Consider the specification $\xi^{(n)}$ in $X$ given by
\begin{align*}
\xi^{(n)}	& = \left( x_i, J_i \right)_{i = 1}^n ,	& a_i	& = K(i - 1) D,	& b_i	& = K i - D ,	& J_i	& = [a_i, b_i] .
\end{align*}
Then for every $n \in \mathbb{N}$ exists an $\eta$-tracing $x^{(n)}$ of $\xi^{(n)}$. Take $x'$ to be any limit point of the sequence $\left( x^{(n)} \right)_{n = 1}^\infty$, meaning $x'$ is an $\eta$-tracing of $\xi^{(n)}$ for all $n \in \mathbb{N}$. For $k \in \mathbb{N}$, write
\begin{align*}
	n	& = \lfloor k / K \rfloor,	& r	& = k - K n \in \{0, 1, \ldots, K - 1\} .
\end{align*}
Then $k = K n + r$, and $n / k \leq 1 / K$. Consider $k \gg K$. Then
\begin{align*}
	& \mathbb{A}_k f \left( x', y_0 \right) \\
=	& \left[ \frac{1}{n} \sum_{i = 1}^{n} \mathbb{A}_{J_i} f \left( x' , y_0 \right) \right] + O \left( n / k \right) + O(K / k) \\
\leq	& \left[ \frac{1}{n} \sum_{i = 1}^{n} \mathbb{A}_{J_i} \left( f \left( x_i , y_0 \right) + \epsilon \right) \right] + O \left( 1 / K \right) + O(K / k) \\
=	& \left[ \frac{1}{n} \sum_{i = 1}^{n} \min_{x \in X} \mathbb{A}_{J_i} f \left( x , y_0 \right) \right] + \epsilon + O \left( 1 / K \right) + O(K / k) \\
\leq	& \left[ \frac{1}{n} \sum_{i = 1}^{n} \min_{x \in X} \mathbb{A}_{[K(i - 1), K i - 1]} f \left( x , y_0 \right) \right] + \epsilon + O \left( 1 / K \right) + O(K / k) \\
\leq	& \left[ \frac{1}{n} \sum_{i = 1}^{n} \max_{y \in Y} \min_{x \in X} \mathbb{A}_{K} f \left( x , y_0 \right) \right] + \epsilon + O \left( 1 / K \right) + O(K / k) \\
\leq	& \delta(f) + 2 \epsilon + O \left( 1 / K \right) + O(K / k) .
\end{align*}
In other words, there exist constants $A, B$ depending on $(X, T), (Y, S), f, \epsilon$ such that
\begin{align*}
\mathbb{A}_k f \left( x', y_0 \right)	& \leq \delta(f) + 2 \epsilon + A \frac{1}{K} + B \frac{K}{k} 
\end{align*}
for sufficiently large $k \in \mathbb{N}$. Therefore, taking $k \gg K \gg K_1$, we can ensure that $\mathbb{A}_k f \left( x', y_0 \right) \leq \delta(f) + 3 \epsilon$ for all sufficiently large $k \in \mathbb{N}$. Since this holds for all $\epsilon > 0$, we can conclude that
\begin{align*}
	\inf_{x \in X} \limsup_{k \to \infty} \mathbb{A}_k f \left( x, y_0 \right)	& \leq \limsup_{k \to \infty} \mathbb{A}_k f \left( x', y_0 \right) \\
	& \leq \delta(f) + 3 \epsilon \\
	\stackrel{\epsilon \searrow 0}{\Rightarrow} \inf_{x \in X} \limsup_{k \to \infty} \mathbb{A}_k f \left( x, y_0 \right)	& \leq \delta(f) \\
	\Rightarrow \gamma(f)	& \leq \delta(f) .
\end{align*}
\end{subproof}
\end{proof}

Of the two arguments we use in our proof of Theorem \ref{Delta gamma inequality}, neither is sufficient to compare $\delta(f)$ to $\beta(f)$. In our proof of claim (a), we produced a point $y_0 ' \in Y$ for which we could compare $\limsup_{k \to \infty} \mathbb{A}_k f \left(x, y_0' \right)$ with $\delta(f)$, but there is no guarantee that this point $y_0 '$ would admit an $x \in X$ such that $\lim_{k \to \infty} \mathbb{A}_k f \left( x, y_0' \right)$ exists. Similarly, in our proof of claim (b), we produce a point $x ' \in X$ for which we can compare $\limsup_{k \to \infty} \mathbb{A}_k f \left( x' , y_0 \right)$ with $\delta(f)$, but again, it does not follow from our proof that $\lim_{k \to \infty} \mathbb{A}_k f \left(x', y_0 \right)$ should exist.

\begin{Cor}
Consider topological dynamical systems $(X, T), (Y, S)$ and a continuous function $f \in C(X \times Y)$. If $(X, T), (Y, S)$ both have the periodic specification property, then $\alpha(f) = \gamma(f) = \delta(f) \leq \beta(f)$.
\end{Cor}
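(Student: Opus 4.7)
The plan is to assemble the corollary directly from results already established in this section, so the main task is bookkeeping rather than new estimation.

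First, I would note the key observation that the periodic specification property implies the (not necessarily periodic) specification property, so both Theorem \ref{Identity for alpha and delta under periodic specification} and Theorem \ref{Delta gamma inequality} apply under our hypotheses. From Theorem \ref{Identity for alpha and delta under periodic specification}, parts (a) and (b), we obtain $\alpha(f) = \delta(f)$. Then, applying both parts of Theorem \ref{Delta gamma inequality}, we obtain $\gamma(f) = \delta(f)$. This already gives the chain of equalities $\alpha(f) = \gamma(f) = \delta(f)$.

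The remaining inequality $\alpha(f) \leq \beta(f)$ admits two routes. The cleanest is to invoke Theorem \ref{alpha leq beta} directly: the coordinate projection $\pi_Y : X \times Y \to Y$ is open, so the general inequality $\alpha(f) \leq \beta(f)$ applies without needing any specification hypothesis. Alternatively, if one prefers to use the structure of this section, one can chain parts of Lemma \ref{Some periodic identities}: parts (a) and (b) give $\alpha(f) = \alpha_{per}(f)$, the elementary identity $\alpha_{per}(f) = \beta_{per}(f)$ observed right after the definition of these periodic variants gives $\alpha_{per}(f) = \beta_{per}(f)$, and part (c) gives $\beta_{per}(f) \leq \beta(f)$.

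Since this is a pure corollary, there is no substantive obstacle; the only thing to be slightly careful about is confirming that the hypotheses of each cited theorem are satisfied (in particular, that periodic specification entails specification, which is immediate from the definitions since the periodic specification property simply strengthens the conclusion of the specification property by requiring the tracing orbit to be periodic). Combining the three ingredients then yields $\alpha(f) = \gamma(f) = \delta(f) \leq \beta(f)$, as desired.
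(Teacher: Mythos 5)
Your proposal is correct and follows essentially the same route as the paper: cite Theorem \ref{Identity for alpha and delta under periodic specification} for $\alpha(f) = \delta(f)$, Theorem \ref{Delta gamma inequality} for $\gamma(f) = \delta(f)$, and Theorem \ref{alpha leq beta} for $\alpha(f) \leq \beta(f)$. Your extra care in checking that periodic specification implies specification (so that Theorem \ref{Delta gamma inequality} applies) and that the coordinate projection is open (so that Theorem \ref{alpha leq beta} applies) is sound but does not change the argument.
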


\begin{proof}
This corollary distills the conclusions of several earlier results. The identity $\alpha(f) = \delta(f)$ comes from Theorem \ref{Identity for alpha and delta under periodic specification}, the identity $\delta(f) = \gamma(f)$ comes from Theorem \ref{Delta gamma inequality}, and the inequality $\alpha(f) \leq \beta(f)$ comes from Theorem \ref{alpha leq beta}.
\end{proof}

\begin{Thm}\label{Bounding beta}
	Consider topological dynamical systems $(X, T), (Y, S)$ and a continuous function $f \in C(X \times Y)$. If $(Y, S)$ has the periodic specification property, and $\operatorname{Per}_T(X) \neq \emptyset$, then $\beta(f) \geq \delta(f)$. In particular, this attains if $(X, T), (Y, S)$ both have the periodic specification property.
\end{Thm}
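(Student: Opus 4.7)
The plan is to argue directly from the definitions: for each $\epsilon > 0$, I will manufacture a test point $\tilde y \in Y$, use periodic specification on $(Y, S)$ to guarantee that $\mathbb{A}_k f(x, \tilde y)$ is bounded below by $\delta(f) - O(\epsilon)$ \emph{uniformly in} $x \in X$ for all large $k$, and use $\operatorname{Per}_T(X) \neq \emptyset$ to certify that $\tilde y \in \pi(R_f)$, so that it is a legitimate test point in the definition of $\beta(f)$.

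First I would fix $\epsilon > 0$, choose $\eta > 0$ small enough that $d_Y(y, y') < \eta$ forces $|f(x, y) - f(x, y')| < \epsilon$ uniformly in $x \in X$, and set $D = D_Y(\eta)$. Since $\delta(f)$ is a limit superior, there are arbitrarily large $K \in \mathbb{N}$ with $\max_{y \in Y} \min_{x \in X} \mathbb{A}_K f(x, y) > \delta(f) - \epsilon$; I would pick one such $K$ much larger than $D$ and with $D \|f\| / K < \epsilon$. Let $y_K \in Y$ realize the maximum, so that $\mathbb{A}_K f(x, y_K) > \delta(f) - \epsilon$ holds simultaneously for \emph{every} $x \in X$. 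Periodic specification on $(Y, S)$ then supplies a point $\tilde y \in Y$ of period $K - 1 + D$ that is an $\eta$-tracing of the one-block specification $(y_K, [0, K - 1])$; by periodicity, $S^{i(K - 1 + D)} \tilde y$ is an $\eta$-tracing of the same specification for every $i \in \mathbb{Z}$.

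Next comes the ergodic-average estimate. For any $x \in X$ and any $i \geq 0$, the tracing gives
\begin{align*}
\mathbb{S}_{[i(K - 1 + D), \, i(K - 1 + D) + K - 1]} f(x, \tilde y) \geq \mathbb{S}_K f\bigl(T^{i(K - 1 + D)} x, y_K\bigr) - K \epsilon > K(\delta(f) - 2 \epsilon),
\end{align*}
while each intervening gap of length $D - 1$ contributes at most $(D-1) \|f\|$ in absolute value. Writing a general $k$ as $n(K - 1 + D) + r$ with $0 \leq r < K - 1 + D$, summing over the $n$ full periods and absorbing the tail into an $O(K/k)$ remainder, one obtains $\mathbb{A}_k f(x, \tilde y) \geq \delta(f) - 3 \epsilon$ for all sufficiently large $k$, and crucially this lower bound is uniform in $x \in X$. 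In particular, $\liminf_{k \to \infty} \mathbb{A}_k f(x, \tilde y) \geq \delta(f) - 3 \epsilon$ for every $x \in X$.

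The main obstacle, and the reason the hypothesis $\operatorname{Per}_T(X) \neq \emptyset$ enters, is that a liminf bound alone does not place $\tilde y$ in $\pi(R_f)$; one still has to exhibit some $x$ for which $\lim_k \mathbb{A}_k f(x, \tilde y)$ actually exists. Here I would pick any $x_0 \in \operatorname{Per}_T^{t_0}(X)$: since $(x_0, \tilde y)$ is then a periodic point of $T \times S$ with period dividing $\operatorname{lcm}(t_0, K - 1 + D)$, its Birkhoff averages converge, so $(x_0, \tilde y) \in R_f$ and $\tilde y \in \pi(R_f)$. For every $x$ with $(x, \tilde y) \in R_f$ the limit equals the liminf and hence is at least $\delta(f) - 3\epsilon$, yielding
\begin{align*}
\beta(f) \geq \inf_{x : (x, \tilde y) \in R_f} \lim_{k \to \infty} \mathbb{A}_k f(x, \tilde y) \geq \delta(f) - 3 \epsilon,
\end{align*}
and letting $\epsilon \searrow 0$ finishes the proof. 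The final assertion, that both hypotheses hold when $(X, T)$ and $(Y, S)$ have the periodic specification property, is immediate since periodic specification forces $\operatorname{Per}_T(X) \neq \emptyset$.
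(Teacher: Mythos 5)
Your proposal is correct and follows essentially the same route as the paper's proof: build a periodic $\eta$-tracing $\tilde y$ of a near-$\delta$-optimal segment $(y_K, [0,K-1])$ via periodic specification on $(Y,S)$, decompose $[0,k-1]$ into tracing blocks plus $O(D)$ gaps to get a lower bound on $\mathbb{A}_k f(x,\tilde y)$ uniform in $x$, and invoke $\operatorname{Per}_T(X)\neq\emptyset$ to certify $\tilde y \in \pi(R_f)$. The only cosmetic difference is that the paper tracks the error $s_\ell = \max_y\min_x \mathbb{A}_\ell f(x,y) - \delta(f)$ and selects $\ell$ with $|s_\ell|$ small, whereas you select $K$ directly from the limsup; these are equivalent.
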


\begin{proof}
	Let $D_Y : (0, + \infty) \to \mathbb{N}$ be a function such that every $D_Y(\eta)$-spaced specification $\xi = \left( y_i, [a_i, b_i] \right)_{i = 1}^n$ admits an $\eta$-tracing (with respect to $d_Y$) of $\xi$ with period $b_n - a_1 + D_Y(\eta)$. Fix $\epsilon > 0$, and choose $\eta > 0$ such that if $\mathbf{v}, \mathbf{w} \in X \times Y$, and $d_{X \times Y}(\mathbf{v}, \mathbf{w}) < \eta$, then $|f(\mathbf{v}) - f(\mathbf{w})| < \epsilon$. Set $D = D_Y(\eta)$.
	
	For every $\ell \in \mathbb{N}$, choose $y_\ell \in Y$ such that
	\begin{align*}
		\min_{x \in X} \mathbb{A}_\ell f(x, y_\ell)	& = \max_{y \in Y} \min_{x \in X} \mathbb{A}_\ell f(x, y) ,
	\end{align*}
	and let $\xi^{(\ell)}$ be the specification
	\begin{align*}
		\xi^{(\ell)}	& = \left( y_\ell , [0, \ell - 1] \right) .
	\end{align*}
	Then there exists an $\eta$-tracing $y^{(\ell)}$ of $\xi^{(\ell)}$ with period $\ell - 1 + D$.
	
	Our hypothesis that $\operatorname{Per}_T(X) \neq \emptyset$ means that if $y \in \operatorname{Per}_S(Y)$, then $\left\{ x \in X : (x, y) \in R_f \right\} \supseteq \operatorname{Per}_T(X)$, and in particular $\left\{ x \in X : (x, y) \in R_f \right\} \neq \emptyset$. Clearly, if $(X, T)$ has the periodic specification property, then $\operatorname{Per}_T(X) \neq \emptyset$. In particular, this implies that $\operatorname{Per}_S(Y) \subseteq \pi_Y R_f$.
	
	For now, set
	\begin{align*}
	s_\ell = \max_{y \in Y} \min_{x \in X} \mathbb{A}_\ell f(x, y) - \delta(f) ,
	\end{align*}
	noting that $\limsup_{\ell \to \infty} s_\ell = 0$, and therefore that $\liminf_{\ell \to \infty} |s_\ell| = 0$. For $\ell \in \mathbb{N}$, fix $x_0 \in X$ such that $\left( x_0 , y^{(\ell)} \right) \in R_f$. We will later introduce further assumptions on $\ell$ for reasons that will become clear from our estimates, but for now, observe that we can choose such an $\ell \in \mathbb{N}$ to be arbitrarily large. For $k \in \mathbb{N}$, set $n = \lfloor k / (\ell - 1 + D) \rfloor$, and $J_i = i (\ell - 1 + D) + [0, \ell - 1]$. Then we have that
	\begin{align*}
		\mathbb{A}_k f \left( x_0, y^{(\ell)} \right) =	& \left[ \frac{1}{k} \sum_{i = 0}^{n - 1} \sum_{j = 0}^{\ell - 2 + D} f \left( T^{j + i(\ell - 1 + D)} x_0 , S^{j + i(\ell - 1 + D)} y^{(\ell)} \right) \right] \\
			& + \left[ \frac{1}{k} \sum_{j = n (\ell - 1 + D)}^{k - 1} f \left( T^j x_0 , S^j y^{(\ell)} \right) \right] \\
		=	& \left[ \frac{1}{n} \sum_{i = 0}^{n - 1} \mathbb{A}_{J_i} f \left( x_0, y^{(\ell)} \right) \right] + O \left( \frac{1}{\ell} \right) + O \left( \frac{\ell}{k} \right) \\
		\geq	& \left[ \frac{1}{n} \sum_{i = 0}^{n - 1} \mathbb{A}_{J_i} f \left( x_0, S^{- i (\ell - 1 + D)} y_\ell \right) \right] - \epsilon + O \left( \frac{1}{\ell} \right) + O \left( \frac{\ell}{k} \right) \\
		=	& \left[ \frac{1}{n} \sum_{i = 0}^{n - 1} \mathbb{A}_{\ell} f \left( T^{i (\ell - 1 + D)} x_0, y_\ell \right) \right] - \epsilon + O \left( \frac{1}{\ell} \right) + O \left( \frac{\ell}{k} \right) \\
		\geq	& \left[ \frac{1}{n} \sum_{i = 0}^{n - 1} \min_{x \in X} \mathbb{A}_{\ell} f \left( x, y_\ell \right) \right] - \epsilon + O \left( \frac{1}{\ell} \right) + O \left( \frac{\ell}{k} \right) \\
		=	& \left[ \max_{y \in Y} \min_{x \in X} \mathbb{A}_{\ell} f \left( x, y\right) \right] - \epsilon + O \left( \frac{1}{\ell} \right) + O \left( \frac{\ell}{k} \right) \\
		\geq	& \delta(f) - |s_\ell| - \epsilon + O \left( \frac{1}{\ell} \right) + O \left( \frac{\ell}{k} \right) .
	\end{align*}
	That is, there exist constants $A, B \in \mathbb{R}$ depending on $(X, T), (Y, S), f, \epsilon$ such that
	\begin{align*}
	\mathbb{A}_k f \left( x_0, y^{(\ell)} \right)	& \geq \delta (f) - |s_\ell| - \epsilon + \frac{A}{\ell} + \frac{B \ell}{k}	& \textrm{for all $k, \ell \in \mathbb{N}$.}
	\end{align*}
	First, we can choose $L \in \mathbb{N}$ such that $|s_L| < \epsilon$, and such that $\left| A / L \right| \leq \epsilon$. We can then choose $K = K_L \in \mathbb{N}$ such that $\left| B L / K \right| \leq \epsilon$. This means that for $k \geq K$, we have for all $\left( x_0, y^{(L)} \right) \in R_f \cap (\pi_Y)^{-1} \left\{ y^{(L)} \right\}$ that
	\begin{align*}
	\mathbb{A}_k f \left( x_0, y^{(L)} \right)	& \geq \delta(f) - 4 \epsilon	& \textrm{for all $\left( x_0, y^{(L)} \right) \in R_f \cap (\pi_Y)^{-1} \left\{ y^{(L)} \right\} , k \geq K$} \\
	\Rightarrow \lim_{k \to \infty} \mathbb{A}_k f \left( x_0, y^{(L)} \right)	& \geq \delta(f) - 4 \epsilon .
	\end{align*}
	Since this inequality holds for all $\left( x_0, y^{(L)} \right) \in R_f \cap (\pi_Y)^{-1} \left\{ y^{(L)} \right\}$, it follows that
	\begin{align*}
	\beta(f)	& = \sup_{y \in \pi_Y R_f } \inf_{\left( x, y \right) \in R_f} \lim_{k \to \infty} \mathbb{A}_k f \left( x_0, y \right) \geq \inf_{\left( x, y^{(L)} \right) \in R_f} \lim_{k \to \infty} \mathbb{A}_k f \left( x_0, y^{(L)} \right) \geq \delta(f) - 4 \epsilon.
	\end{align*}
	Taking $\epsilon \searrow 0$ gives us the desired inequality.
\end{proof}

As Proposition \ref{Alpha delta counterexample} shows, some sort of condition on $(X, T), (Y, S)$ is needed to ensure that $\alpha, \beta, \gamma \geq \delta$, even for locally constant continuous functions, meaning in particular that Theorems \ref{Identity for alpha and delta under periodic specification}, \ref{Delta gamma inequality}, \ref{Bounding beta} require some kind of additional assumptions on the underlying systems. Both the systems $(X, T), (Y, S)$ in our proof of Proposition \ref{Alpha delta counterexample} are conjugate to shifts of finite type (defined in Definition \ref{Definition of SFT}), and by a classical result of P. Walters \cite[Theorem 1]{WaltersShadowing}, every shift of finite type has the shadowing property. Since this is the only time we mention the shadowing property, we decline to define it here, instead referring the interested reader to \cite[Section 7]{Panorama} for more on the shadowing property and its relation to various specification properties.

\section{Optimization of locally constant functions over shifts of finite type}\label{Locally constant secction}

In this section, we study the adversarial ergodic optimization of locally constant functions $f \in C(X \times Y)$ over transitive shifts of finite type $(X, T), (Y, S)$. In particular, we focus on the question of whether there exists an $\alpha$-maximizing measure $\nu \in \mathcal{M}_S (Y)$ (c.f. Definition \ref{Notation for psi}), and when $\nu$ can be taken to have finite support. We show that these questions are equivalent to qualitative questions about certain shifts $Y_{f, C} \subseteq Y$ whose points can be understood as generalized ground-state configurations. We reserve Subsection \ref{Locally constant functions, classical case, subsection} to focus on the ``classical ergodic optimization" case of this question (i.e. where $(X, T)$ is a trivial shift), a setting where the shifts $Y_{f, C}$ are relatively easy to understand, and where certain structural properties about these shifts can be proven. Our project of adversarial ergodic optimization is inspired in part by \cite{IteratedOptimization}, where the authors study the optimization of locally constant functions on shifts of finite type, so we consider this setting to be a natural starting point for the development of our theory.

\begin{Def}\label{Definition of SFT}
Consider a finite alphabet $\mathcal{A}$ with the discrete topology, and let $T : \mathcal{A}^\mathbb{Z} \to \mathcal{A}^\mathbb{Z}$ be the left shift action
$$(T x) (j) = x(j + 1) .$$
A \emph{shift on $\mathcal{A}$} is a nonempty compact $X \subseteq \mathcal{A}^\mathbb{Z}$ such that $T X = X$. A shift $(X, T)$ on $\mathcal{A}$ is called an \emph{$M$-step shift of finite type}, where $M \in \mathbb{N}$, if there exists a set $\mathcal{F} \subseteq \mathcal{A}^{\{0, 1, \ldots, M\}}$ such that
\begin{align*}
X	& = \left\{ x \in \mathcal{A}^\mathbb{Z} : \forall j \in \mathbb{Z} \; \left[ \left( T^j x \right) \vert_{[0, M]} \not \in \mathcal{F} \right] \right\} .
\end{align*}
More generally, we call a shift $X$ a \emph{shift of finite} type if it is an $M$-step shift of finite type for some $M \in \mathbb{N}$.
\end{Def}

Since the map $T$ for a shift $(X, T)$ is always understood to be the left shift action, we will sometimes omit reference to it, instead referring to $X$ as a shift over an alphabet $\mathcal{A}$.

\begin{Not}
Given a point $x \in \mathcal{A}^\mathbb{Z}$, and integers $a, b \in \mathbb{Z} \cup\{- \infty, + \infty\}; a \leq b$, we write
$$x \vert_a^b : = x \vert_{[a, b]} .$$
\end{Not}

\begin{Def}
	Given a shift $(X, T)$ on $\mathcal{A}$, a \emph{locally constant function} is a function $f : X \to \mathbb{R}$ such that there exists a constant $L \in \mathbb{N}$ for which if $x_1, x_2 \in X$, and $x_1 \vert_{-L}^L = x_2 \vert_{-L}^L$, then $f(x_1) = f(x_2)$. Equivalently, $f$ is locally constant if there exists a constant $L \in \mathbb{N}$ and a function $F : \mathcal{A}^{[-L, L]} \to \mathbb{R}$ such that
	$$f(x) = F \left( x \vert_{-L}^L \right) .$$
\end{Def}

We can observe that a locally constant function is continuous.

Throughout this section, we will make use of the following simplification. Consider $(X, T)$ an $M$-step shift of finite type over $\mathcal{A}$, and $f : X \to \mathbb{R}$ is a locally constant function. Consider a constant $L \in \mathbb{N}$ and a function $F : \mathcal{A}^{[-L, L] } \to \mathbb{R}$ such that $f (x) = F \left( x \vert_{-L}^L \right)$. Set $N = \max\{M, L\}$, and define a topological embedding map $c : \mathcal{A}^\mathbb{Z} \to \left( \mathcal{A}^{[-N, N] } \right)^\mathbb{Z}$ by
\begin{align*}
	(c x) (j)	& = \left( T^j x \right) \vert_{- N}^{N} .
\end{align*}
This is called a sliding block code (c.f. \cite{LindMarcus}). The image $X' = c(X) \subseteq \left( \mathcal{A}^{[-N, N] } \right)^\mathbb{Z}$ is a $1$-step shift of finite type over $\mathcal{A}^{[-N, N] }$, is topologically conjugate to $X$ via $c$, and admits a function $F' : \mathcal{A}^{[-N, N] } \to \mathbb{R}$ such that $\left( f \circ c^{-1} \right) \left( x' \right) = F' \left( x' (0) \right)$. Since it's always possible to do this, we will use this simplification for the remainder of this section, allowing us to omit some parameters and streamline certain definitions and arguments.

As such, for the remainder this section, we assume that $(X, T), (Y, S)$ are both $1$-step shifts of finite type over a finite alphabet $\mathcal{A}$, and that $f \in C(X \times Y)$ is such that there exists a function $F : \mathcal{A}^2 \to \mathbb{R}$ for which
$$f(x, y) = F(x(0), y(0)) .$$
{ A shift $(X, T)$ over an alphabet $\mathcal{A}$ is \emph{transitive} if there exists a constant $D \in \mathbb{N}$ such that if $x_1, x_2 \in X$, and $b_1, a_2 \in \mathbb{Z}, a_2 - b_1 \geq D$, then there exists $x' \in X$ such that
\begin{align*}
	x'(j)	& = x_1 (j)	& \textrm{for $j \leq b_1$,} \\
	x'(j)	& = x_2 (j)	& \textrm{for $j \geq a_2$.}
\end{align*}}
A shift of finite type $(X, T)$ is transitive if and only if it has the periodic specification property (c.f. \cite[Theorem 47 and associated references]{Panorama}). Therefore, it follows from Lemma \ref{Periodic measures are dense} that $\mathcal{M}_T^{co}(X)$ is dense in $\mathcal{M}_T(X)$ for transitive shifts of finite type $(X, T)$. In summary:

\begin{Lem}\label{Transitive SFT dense periodic measures}
A shift of finite type $(X, T)$ is transitive if and only if it has the periodic specification property. In particular, if $(X, T)$ is a transitive shift of finite type, then $\mathcal{M}_T^{co}(X)$ is dense in $\mathcal{M}_T(X)$.
\end{Lem}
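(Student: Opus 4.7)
The plan is to assemble the lemma from two ingredients that are either already in the paper or immediately citable from the references: a classical structural fact about shifts of finite type (the equivalence of transitivity with periodic specification) and the Sigmund-style density result for periodic measures (Lemma \ref{Periodic measures are dense}). The lemma has two parts, and my plan is to prove the density assertion as an immediate consequence of the equivalence once the latter is in hand.

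For the equivalence of transitivity and the periodic specification property, I would cite \cite[Theorem 47 and associated references]{Panorama} as the paper already does, and sketch why both directions hold, working in the standing reduction to a $1$-step shift of finite type on a finite alphabet $\mathcal{A}$. One direction is immediate: periodic specification trivially implies transitivity, since given any two points $x_1, x_2 \in X$ and any $\eta > 0$, the two-block specification $\bigl( (x_1, [0,0]), (x_2, [D(\eta), D(\eta)]) \bigr)$ admits a periodic $\eta$-tracing, which witnesses transitivity. The nontrivial direction uses the combinatorial structure of the shift of finite type. On a shift with the discrete product topology, once $\eta$ is taken smaller than the diameter of cylinders of some radius $R$, an $\eta$-tracing is precisely a point whose symbols agree with the tracing blocks on cylinders of radius $R$ at the specified times. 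Transitivity of a shift of finite type furnishes a uniform $D \in \mathbb{N}$ such that any two admissible words can be bridged by an admissible word of length at most $D$; iterating this bridging between the blocks of a $D$-spaced specification, then appending one more bridging word from the final block back to the first, produces an admissible periodic $\eta$-tracing. Since $X$ is $1$-step, admissibility only needs to be checked on adjacent symbol pairs, so the splicing is legitimate.

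Given the equivalence, the density statement is essentially free: a transitive shift of finite type has the periodic specification property, so Lemma \ref{Periodic measures are dense} (Sigmund's theorem \cite{SigmundGeneric}) immediately yields that $\mathcal{M}_T^{co}(X)$ is dense in $\mathcal{M}_T(X)$.

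The main obstacle is bureaucratic rather than conceptual: verifying uniformly that the bridging words used to construct the $\eta$-tracing fit within the specified $D$-spacing and that the final closing bridge produces an element with the desired period $b_n - a_1 + D$. For $1$-step shifts of finite type on a finite alphabet this is standard, since the admissibility graph is finite and transitivity provides the needed uniform bound on bridging lengths, but the packaging requires a little care. Since the full argument is classical and already referenced in the paper, I would keep the proof brief, essentially combining the two cited results.
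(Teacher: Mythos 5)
Your proposal is correct and takes essentially the same route as the paper, which offers no proof beyond citing \cite[Theorem 47 and associated references]{Panorama} for the equivalence and then invoking Lemma \ref{Periodic measures are dense} (Sigmund) for the density of $\mathcal{M}_T^{co}(X)$. One small imprecision in your optional sketch: the bridging words must have \emph{any prescribed length} $\geq D$ (exactly filling each gap), not merely length at most $D$; this is exactly what the paper's uniform-gap definition of transitivity supplies, and is the reason that definition corresponds to mixing rather than mere irreducibility.
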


\begin{Def}
	For $a, b \in \mathbb{Z}$, with $a \leq b$, write
	$$P_{a, b} : = \left\{ (v_1, v_2) \in \mathcal{A} : \exists x \in X \; \left[ \left( x(a) = v_1 \right) \land \left( x(b) = v_2 \right) \right] \right\} .$$
	For $v_1, v_2 \in P_{a, b}$ and $y \in Y$, write
	\begin{align*}
		H_{a, b, v_1, v_2} (y)	& : = \min \left\{ \mathbb{S}_{[a, b]} f \left( x , y \right) : x \in X, x (a) = v_1, x (b) = v_2 \right\} ,
	\end{align*}
	where $\mathbb{S}$ is as defined in Definition \ref{Def of average notation}. For a constant $C \geq 0$ and closed interval $[a, b] \subseteq \mathbb{Z}$ with $a \leq b$, we say that a point $y' \in Y$ is an \emph{$(f, C)$-improvement of $y$ on $[a, b]$} if
	\begin{align*}
		y \vert_{\mathbb{Z} \setminus [a, b]}	& = y' \vert_{\mathbb{Z} \setminus [a, b]} ,	&	& \textrm{and} \\
		H_{a, b, v_1, v_2}(y) + C	& < H_{a, b, v_1, v_2} \left( y' \right)	&	& \textrm{for all $(v_1, v_2) \in P_{a, b}$.}
	\end{align*}
	Write
	\begin{align*}
		Y_{f, C}	& = \bigcap_{a = - \infty}^{+ \infty} \bigcap_{b = a}^{+ \infty} \bigcup_{(v_1, v_2) \in P_{a, b}} \left\{ y \in Y : H_{a, b, v_1, v_2} (y) \geq \max_{\substack{y' \in Y \\ y' \vert_{\mathbb{Z} \setminus [a, b]} = y \vert_{\mathbb{Z} \setminus [a, b]}}} H_{a, b, v_1, v_2} \left( y' \right) - C \right\} .
	\end{align*}
	That is, a point $y \in Y$ is in $Y_{f, C}$ if and only if it cannot be $(f, C)$-improved on any $[a, b]$ with $a \leq b$.
\end{Def}

\begin{Prop}\label{Y_{f, C} closed}
$Y_{f, C}$ is a shift for all $C \geq 0$.
\end{Prop}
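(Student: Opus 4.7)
I would verify three properties of $Y_{f, C}$: closed, shift-invariant, and nonempty.

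\textbf{Closedness.} Since $f(x, y) = F(x(0), y(0))$ is local, $y \mapsto H_{a, b, v_1, v_2}(y) = \min_{x \in X : x(a) = v_1, x(b) = v_2} \sum_{j = a}^{b} F(x(j), y(j))$ depends only on $y|_{[a, b]}$. Because $Y$ is a $1$-step shift of finite type, the set of admissible $y' \in Y$ with $y'|_{\mathbb{Z} \setminus [a, b]} = y|_{\mathbb{Z} \setminus [a, b]}$ is determined purely by the boundary symbols $(y(a-1), y(b+1))$, so $y \mapsto \max_{y'} H_{a, b, v_1, v_2}(y')$ likewise depends only on $y|_{[a-1, b+1]}$. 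Hence each basic set $U_{a, b, v_1, v_2} := \{y \in Y : H_{a, b, v_1, v_2}(y) \geq \max_{y'} H_{a, b, v_1, v_2}(y') - C\}$ is a cylinder (hence clopen); a finite union over $(v_1, v_2) \in P_{a, b}$ stays clopen, and a countable intersection over pairs $a \leq b$ yields a closed set.

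\textbf{Shift-invariance.} A direct change of variables $\tilde{x} = T^{-1} x$ shows $H_{a, b, v_1, v_2}(Sy) = H_{a+1, b+1, v_1, v_2}(y)$, and an analogous substitution $\tilde{y}' = S^{-1} y'$ handles the max. Since $P_{a, b} = P_{a+1, b+1}$ by $T$-invariance of $X$, the condition for $Sy$ at $(a, b)$ coincides with the condition for $y$ at $(a+1, b+1)$. Because the defining intersection ranges over all pairs $a \leq b$ and $(a, b) \mapsto (a+1, b+1)$ is a bijection on such pairs, $S Y_{f, C} = Y_{f, C}$.

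\textbf{Nonemptiness.} Set $Y_{f, C}^{(N)} := \bigcap_{-N \leq a \leq b \leq N} \bigcup_{(v_1, v_2) \in P_{a, b}} U_{a, b, v_1, v_2}$, so that $Y_{f, C} = \bigcap_{N} Y_{f, C}^{(N)}$ is a decreasing intersection of closed subsets of the compact space $Y$. By the finite intersection property, it suffices to show each $Y_{f, C}^{(N)}$ is nonempty. My approach for the finite case is to pick $y^{(N)} \in Y$ as a global maximizer of $\max_{(v_1, v_2) \in P_{-N, N}} H_{-N, N, v_1, v_2}(y)$, and then invoke a Bellman-type decomposition provided by the $1$-step SFT structure of $X$: for any $[a, b] \subseteq [-N, N]$, the minimum defining $H_{-N, N, v_1, v_2}(y)$ factors through a min over interior boundary labels $(u_1, u_2) \in P_{a, b}$ of a sum of boundary-potential terms (depending only on $y|_{\mathbb{Z} \setminus [a, b]}$) plus the middle piece $H_{a, b, u_1, u_2}(y)$. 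Maximality of $y^{(N)}$ then forces some interior pair $(u_1, u_2)$ to witness the formula condition on $[a, b]$. I expect the main obstacle to be handling the $\min$--$\max$ quantifier structure of the defining condition, which makes it delicate to convert global maximization of $H_{-N, N, v_1^*, v_2^*}$ into a \emph{single} pair $(u_1, u_2) \in P_{a, b}$ for which $H_{a, b, u_1, u_2}(y^{(N)})$ is within $C$ of its max over admissible perturbations.
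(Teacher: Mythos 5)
Your treatment of closedness and shift-invariance is correct and matches the paper's (terse) argument: each set $\left\{ y : H_{a, b, v_1, v_2}(y) \geq \max_{y'} H_{a, b, v_1, v_2}(y') - C \right\}$ is a cylinder because both functions depend only on $y \vert_{a-1}^{b+1}$, so the finite unions are clopen and the countable intersection is closed; invariance is the change-of-variables you describe. The problem is the nonemptiness part, which is genuinely required here (the paper's Definition \ref{Definition of SFT} builds nonemptiness into the word ``shift'') and which your proposal does not actually establish. You reduce to showing each finite-stage set $Y_{f,C}^{(N)}$ is nonempty and propose taking $y^{(N)}$ to maximize $\max_{(v_1,v_2) \in P_{-N,N}} H_{-N,N,v_1,v_2}(\cdot)$, but the step ``maximality forces some interior pair $(u_1,u_2)$ to witness the condition on $[a,b]$'' is exactly where the argument breaks, as you yourself flag. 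The membership condition demands a \emph{single} pair $(u_1,u_2) \in P_{a,b}$ that is within $C$ of optimal \emph{uniformly over all} admissible perturbations $y'$; the Bellman decomposition of $H_{-N,N,v_1^*,v_2^*}$ selects a minimizing interior pair that can change with $y'$, so maximality of $y^{(N)}$ only yields the weaker ``for every $y'$ there exists a pair'' statement. Until that quantifier inversion is justified, the nonemptiness claim is unproved.

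The paper avoids this entirely: it explicitly defers nonemptiness to Corollary \ref{Ground states nonempty}, which obtains it from Theorem \ref{Theta map} (existence of an $\alpha$-maximizing measure $\nu_0$, via the continuity and affineness of $\psi_f$) combined with Proposition \ref{Optimizing measures supported on ground states} (any $\alpha$-maximizing measure is supported on $Y_{f,C}$), so that $\emptyset \neq \operatorname{supp}(\nu_0) \subseteq Y_{f,C}$. If you want a self-contained proof at this point in the paper, you must either close the combinatorial gap above or restructure as the paper does and prove only closedness and invariance here, postponing nonemptiness until the measure-theoretic machinery is available.
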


\begin{proof}
It's obvious that $Y_{f, C}$ is shift-invariant. To prove that $Y_{f, C}$ is closed, it suffices to observe that the functions
\begin{align*}
y	& \mapsto H_{a, b, v_1, v_2}(y) ,	& y	& \mapsto \max_{\substack{y' \in Y \\ y' \vert_{\mathbb{Z} \setminus [a, b]} = y \vert_{\mathbb{Z} \setminus [a, b]}}} H_{a, b, v_1, v_2} \left( y' \right)
\end{align*}
depend only on $y \vert_{a - 1}^{b + 1}$, which implies that the set
\begin{align*}
\left\{ y \in Y : H_{a, b, v_1, v_2} (y) \geq \max_{\substack{y' \in Y \\ y' \vert_{\mathbb{Z} \setminus [a, b]} = y \vert_{\mathbb{Z} \setminus [a, b]}}} H_{a, b, v_1, v_2} \left( y' \right) - C \right\}
\end{align*}
is closed for all $a, b \in \mathbb{Z} ; a \leq b ; (v_1, v_2) \in P_{a, b}$.

The final thing to check is that $Y_{f, C}$ is nonempty. This is a consequence of results which we will discuss later in this section, namely Theorem \ref{Theta map} and Proposition \ref{Optimizing measures supported on ground states}, so we prove it as Corollary \ref{Ground states nonempty}.
\end{proof}

\begin{Lem}\label{Continuity in C of Y_{f, C}}
Take $(C_n)_{n = 1}^\infty$ to be a decreasing sequence of nonnegative numbers converging to $C$. Then
	\begin{align*}
		Y_{f, C}	& = \bigcap_{n = 1}^\infty Y_{f, C_n} .
	\end{align*}
	In particular, if $Y_{f, C_n} \neq \emptyset$ for all $n \in \mathbb{N}$, then $Y_{f, C} \neq \emptyset$.
\end{Lem}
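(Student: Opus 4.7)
The plan is to prove a double containment, plus handle the ``in particular" clause by a compactness argument. The key observation is that $Y_{f,C}$ is monotone non-decreasing in $C$: if $C' \geq C$, then the condition $H_{a,b,v_1,v_2}(y) \geq \max H_{a,b,v_1,v_2}(y') - C'$ is weaker than the corresponding condition with $C$, so $Y_{f,C} \subseteq Y_{f,C'}$. Applying this with $C' = C_n \geq C$ gives $Y_{f,C} \subseteq Y_{f,C_n}$ for each $n$, which yields the containment $Y_{f,C} \subseteq \bigcap_{n=1}^\infty Y_{f,C_n}$.

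For the reverse containment, I would fix $y \in \bigcap_{n=1}^\infty Y_{f,C_n}$ together with any interval $[a,b] \subseteq \mathbb{Z}$. For every $n$, the definition of $Y_{f,C_n}$ provides some $(v_1^{(n)}, v_2^{(n)}) \in P_{a,b}$ such that
\[
H_{a,b,v_1^{(n)},v_2^{(n)}}(y) \;\geq\; \max_{\substack{y' \in Y \\ y'|_{\mathbb{Z}\setminus[a,b]} = y|_{\mathbb{Z}\setminus[a,b]}}} H_{a,b,v_1^{(n)},v_2^{(n)}}(y') \;-\; C_n.
\]
The crucial finiteness observation is that $P_{a,b} \subseteq \mathcal{A}^2$ is finite, so by pigeonhole there is a single pair $(v_1,v_2) \in P_{a,b}$ and a subsequence $n_k \to \infty$ with $(v_1^{(n_k)}, v_2^{(n_k)}) = (v_1,v_2)$ for every $k$. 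Since $C_{n_k} \searrow C$, passing to the limit in the displayed inequality yields
\[
H_{a,b,v_1,v_2}(y) \;\geq\; \max_{\substack{y' \in Y \\ y'|_{\mathbb{Z}\setminus[a,b]} = y|_{\mathbb{Z}\setminus[a,b]}}} H_{a,b,v_1,v_2}(y') \;-\; C,
\]
which witnesses membership of $y$ in the $[a,b]$-term of the intersection defining $Y_{f,C}$. Since $[a,b]$ was arbitrary, $y \in Y_{f,C}$.

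For the ``in particular" clause, the sequence $(Y_{f,C_n})_{n=1}^\infty$ is a nested decreasing family of closed (by Proposition \ref{Y_{f, C} closed}, whose closedness argument does not rely on nonemptiness) subsets of the compact space $Y$; monotonicity of $C \mapsto Y_{f,C}$ gives $Y_{f,C_{n+1}} \subseteq Y_{f,C_n}$. If each $Y_{f,C_n}$ is nonempty, the finite intersection property produces a common point in $\bigcap_n Y_{f,C_n} = Y_{f,C}$.

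There is no real obstacle here; the only subtle point is that the ``outer $\bigcup$" in the definition of $Y_{f,C}$ forces a choice of $(v_1,v_2)$ that could a priori depend on $n$, so one must extract a subsequence along which the choice stabilizes. This is precisely what the finiteness of $P_{a,b}$ allows, and it is what makes the monotone limit in $C$ behave well.
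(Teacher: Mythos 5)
Your proof is correct and follows essentially the same route as the paper's: the easy containment via monotonicity of $Y_{f,C}$ in $C$, the pigeonhole extraction of a stabilized pair $(v_1,v_2)\in P_{a,b}$ for the reverse containment, and compactness plus the finite intersection property for the nonemptiness claim. No gaps.
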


\begin{proof}
We prove the identity by a double-containment argument, i.e. by proving the claims $Y_{f, C} \subseteq \bigcap_{n = 1}^\infty Y_{f, C_n}$ and $\bigcap_{n = 1}^\infty Y_{f, C_n} \subseteq Y_{f, C}$.
	
First, consider $y_0 \in Y_{f, C}$. Then for every $a, b \in \mathbb{Z}; a \leq b$, there exists a pair $(v_1, v_2) \in P_{a, b}$ such that $H_{a, b, v_1, v_2} (y_0) \geq \max_{y' \in Y} H_{a, b, v_1, v_2} \left( y' \right) - C$. This means also that
$$H_{a, b, v_1, v_2} (y_0) \geq \max_{\substack{y' \in Y \\ y' \vert_{\mathbb{Z} \setminus [a, b]} = y \vert_{\mathbb{Z} \setminus [a, b]}}} H_{a, b, v_1, v_2} \left( y' \right) - C_n$$
for all $n \in \mathbb{N}$. Therefore
$$y_0 \in \bigcup_{(v_1, v_2) \in P_{a, b}} \left\{ y \in Y : H_{a, b, v_1, v_2} (y) \geq \max_{\substack{y' \in Y \\ y' \vert_{\mathbb{Z} \setminus [a, b]} = y \vert_{\mathbb{Z} \setminus [a, b]}}} H_{a, b, v_1, v_2} \left( y' \right) - C_n \right\}$$
for all $n \in \mathbb{N}, a \in \mathbb{Z}, b \in \mathbb{Z}, a \leq b$. That is,
\begin{align*}
y_0	& \in \bigcap_{n = 1}^\infty \bigcap_{a = - \infty}^{+ \infty} \bigcap_{b = a}^{+ \infty} \bigcup_{(v_1, v_2) \in P_{a, b}} \left\{ y \in Y : H_{a, b, v_1, v_2} (y) \geq \max_{\substack{y' \in Y \\ y' \vert_{\mathbb{Z} \setminus [a, b]} = y \vert_{\mathbb{Z} \setminus [a, b]}}} H_{a, b, v_1, v_2} \left( y' \right) - C_n \right\} = \bigcap_{n = 1}^\infty Y_{f, C_n} .
\end{align*}
Thus $Y_{f, C} \subseteq \bigcap_{n = 1}^\infty Y_{f, C_n}$.

In the other direction, suppose that $y_1 \in \bigcap_{n = 1}^\infty Y_{f, C_n}$, and fix $a, b \in \mathbb{Z}; a \leq b$. Then for every $n \in \mathbb{N}$ exists a pair $(v_1(n), v_2(n)) \in P_{a, b}$ such that
\begin{align*}
H_{a, b, v_1(n), v_2(n)} (y_1)	& \geq \max_{\substack{y' \in Y \\ y' \vert_{\mathbb{Z} \setminus [a, b]} = y \vert_{\mathbb{Z} \setminus [a, b]}}} H_{a, b, v_1(n), v_2(n)} \left( y' \right) - C_n .
\end{align*}
Since $n \mapsto (v_1(n), v_2(n)) $ is a map of an infinite set $\mathbb{N}$ into a finite set $P_{a, b} \subseteq \mathcal{A}^2$, it follows that there exists $(u_1, u_2) \in P_{a, b}$ and a sequence $n_1 < n_2 < n_3 < \cdots$ such that $(u_1, u_2) = (v_1(n_i), v_2(n_i))$ for all $i \in \mathbb{N}$. It follows then that
\begin{align*}
H_{a, b, u_1, u_2} (y_1)	& \geq \max_{\substack{y' \in Y \\ y' \vert_{\mathbb{Z} \setminus [a, b]} = y \vert_{\mathbb{Z} \setminus [a, b]}}} H_{a, b, u_1, u_2} \left( y' \right) - C_{n_i}	& \textrm{(for all $i \in \mathbb{N}$)} \\
\stackrel{i \to \infty}{\Rightarrow} H_{a, b, u_1, u_2} (y_1)	& \geq \max_{\substack{y' \in Y \\ y' \vert_{\mathbb{Z} \setminus [a, b]} = y \vert_{\mathbb{Z} \setminus [a, b]}}} H_{a, b, u_1, u_2} \left( y' \right) - C .
\end{align*}
Thus
$$y_1 \in \bigcup_{(v_1, v_2) \in P_{a, b}} \left\{ y \in Y : H_{a, b, v_1, v_2} (y) \geq \max_{\substack{y' \in Y \\ y' \vert_{\mathbb{Z} \setminus [a, b]} = y \vert_{\mathbb{Z} \setminus [a, b]}}} H_{a, b, v_1, v_2} \left( y' \right) - C \right\}$$
for all $a, b \in \mathbb{Z}; a \leq b$, and therefore
\begin{align*}
y_1	& \in \bigcap_{a = - \infty}^{+ \infty} \bigcap_{b = a}^{+ \infty} \bigcup_{(v_1, v_2) \in P_{a, b}} \left\{ y \in Y : H_{a, b, v_1, v_2} (y) \geq \max_{\substack{y' \in Y \\ y' \vert_{\mathbb{Z} \setminus [a, b]} = y \vert_{\mathbb{Z} \setminus [a, b]}}} H_{a, b, v_1, v_2} \left( y' \right) - C \right\} = Y_{f, C} .
\end{align*}
Thus $\bigcap_{n = 1}^\infty Y_{f, C_n} \subseteq Y_{f, C}$.

To prove the last claim, note that Proposition \ref{Y_{f, C} closed} implies that $\left( Y_{f, C_n} \right)_{n = 1}^\infty$ is a decreasing sequence of nonempty compact sets, and therefore has nonempty intersection.
\end{proof}

The following lemma will be useful in some later estimates, and provides an estimate on the difference between $H_{a, b, v_1, v_2} (y)$ and $\min_{x \in X} \mathbb{S}_{[a, b]} f(x, y)$.

\begin{Lem}\label{Minimum comparison estimate}
	Consider $1$-step shifts of finite type $(X, T), (Y, S)$ over a finite alphabet $\mathcal{A}$, and a locally constant function $f \in C(X \times Y)$ for which there exists $F : \mathcal{A}^2 \to \mathbb{R}$ such that
	\begin{align*}
		f(x, y)	& = F \left( x (0) , y (0) \right)	& \textrm{for all $(x, y) \in X \times Y$.}
	\end{align*}
	Suppose further that $(X, T)$ is transitive. Then there exists a constant $D_X \in \mathbb{N}$ depending on $(X, T)$, but not $f$ with the following property: If $a, b \in \mathbb{Z}; a \leq b; (v_1, v_2) \in P_{a, b}$, and $y_0 \in Y$, then
	\begin{align*}
		\min_{x \in X} \mathbb{S}_{[a, b]} f \left( x, y_0 \right) \leq H_{a, b, v_1, v_2} (y_0) \leq \left[ \min_{x \in X} \mathbb{S}_{[a, b]} f \left( x, y_0 \right) \right] + 4 D_X \|f\| .
	\end{align*}
\end{Lem}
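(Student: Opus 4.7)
\emph{Proof plan.} The inequality $\min_{x \in X} \mathbb{S}_{[a,b]} f(x, y_0) \leq H_{a, b, v_1, v_2}(y_0)$ is tautological, since $H_{a,b,v_1,v_2}(y_0)$ is a minimum of $\mathbb{S}_{[a, b]} f(\cdot, y_0)$ over a subset of $X$. The content of the lemma is the reverse estimate, for which my strategy is to start with an unconstrained minimizer of $\mathbb{S}_{[a, b]} f(\cdot, y_0)$ and surgically modify it in neighborhoods of the endpoints $a$ and $b$, using the transitivity of $(X, T)$ to install the prescribed boundary values $v_1$ and $v_2$.

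Let $D_X \in \mathbb{N}$ be the transitivity constant of $(X, T)$; by definition it depends only on $(X, T)$. Before the main argument, I would dispose of the short-interval case $b - a + 1 \leq 2 D_X$: here both $\min_{x \in X} \mathbb{S}_{[a,b]} f(x, y_0)$ and $H_{a, b, v_1, v_2}(y_0)$ lie in $[-(b - a + 1)\|f\|, (b - a + 1)\|f\|] \subseteq [-2 D_X \|f\|, 2 D_X \|f\|]$, so their difference is automatically at most $4 D_X \|f\|$, and any witness in $X$ for $(v_1, v_2) \in P_{a, b}$ suffices as $\tilde{x}$.

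In the main regime $b - a + 1 > 2 D_X$, I would pick an unconstrained minimizer $x^* \in X$ of $\mathbb{S}_{[a, b]} f(\cdot, y_0)$ and any witness $\hat{x} \in X$ for $(v_1, v_2) \in P_{a, b}$. Applying transitivity twice, I would produce $\tilde{x} \in X$ that agrees with $\hat{x}$ on $(-\infty, a]$ and on $[b, +\infty)$ and agrees with $x^*$ on the middle block $[a + D_X, b - D_X]$: first splice $\hat{x}$ with $x^*$ by applying transitivity with left endpoint $a$ and right endpoint $a + D_X$; then paste $\hat{x}$ back in at the right end by applying transitivity with left endpoint $b - D_X$ and right endpoint $b$. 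Both splices are admissible because each has a gap of width $D_X$, and the two alterations do not collide since $a + D_X \leq b - D_X$ by the length assumption. In particular $\tilde{x}(a) = v_1$ and $\tilde{x}(b) = v_2$.

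It remains to estimate the Birkhoff sum. Because $\tilde{x}$ and $x^*$ coincide on $[a + D_X, b - D_X]$ and $f$ depends only on the zeroth coordinate of its first argument, the sums $\mathbb{S}_{[a, b]} f(\tilde{x}, y_0)$ and $\mathbb{S}_{[a, b]} f(x^*, y_0)$ differ only on the $2 D_X$ positions in $[a, a + D_X - 1] \cup [b - D_X + 1, b]$, with discrepancy at most $2 \|f\|$ per position. This yields
\[
H_{a, b, v_1, v_2}(y_0) \leq \mathbb{S}_{[a, b]} f(\tilde{x}, y_0) \leq \mathbb{S}_{[a, b]} f(x^*, y_0) + 4 D_X \|f\| = \min_{x \in X} \mathbb{S}_{[a, b]} f(x, y_0) + 4 D_X \|f\|,
\]
which is the claim. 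The only mildly delicate point is keeping the left and right splices from overlapping, which is exactly why the short-interval case must be separated off.
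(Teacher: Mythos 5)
Your proposal is correct and follows essentially the same route as the paper: the same case split at $b - a + 1 \leq 2 D_X$ with the same trivial bound, and in the long-interval case the same surgery replacing an unconstrained minimizer near the endpoints to install $v_1, v_2$, with the error controlled by the $2 D_X$ modified coordinates. The only difference is cosmetic — you spell out the two transitivity splices explicitly where the paper simply asserts the existence of the glued point.
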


\begin{proof}
	Take $D_X \in \mathbb{N}$ to be a constant such that if $x_1, x_2 \in X$, and $b_1, a_2 \in \mathbb{Z}, b_1 - a_2 \geq D_X$, then there exists $x' \in X$ such that
	\begin{align*}
		x'(j)	& = x_1 (j)	& \textrm{for $j \leq b_1$,} \\
		x'(j)	& = x_2 (j)	& \textrm{for $j \geq a_2$.}
	\end{align*}
	We consider two cases: where $b - a < 2 D_X$, and where $b - a \geq 2 D_X$.
	
	First, consider the case where $b - a < 2 D_X$, noting that $b - a + 1 \leq 2 D_X$. Then
	\begin{align*}
		\min_{x \in X} \mathbb{S}_{[a, b]} f(x, y_0) \leq H_{a, b, v_1, v_2} (y_0)	& \leq (b - a + 1) \|f\| \\
		& \leq 2 D_X \|f\| \\
		& \leq \left[ \min_{x \in X} \min_{x \in X} \mathbb{S}_{[a, b]} f \left( x, y_0 \right) \right] + (b - a + 1) \|f\| + 2 D_X \|f\| \\
		& \leq \left[ \min_{x \in X} \min_{x \in X} \mathbb{S}_{[a, b]} f \left( x, y_0 \right) \right] + 4 D_X \|f\| .
	\end{align*}
	
	Now consider the case where $b - a \geq 2 D_X$, noting that $b - D_X \geq a + D_X$. Choose $x_0, x_1 \in X$ as follows:
	\begin{align*}
		\mathbb{S}_{[a, b]} f \left( x_0, y_0 \right)	& = \min_{x \in X} \mathbb{S}_{[a, b]} f \left( x, y_0 \right) ,	& x_1 \vert_{a + D_X}^{b - D_X}	& = x_0 \vert_{a + D_X}^{b - D_X} ,	& x_1(a)	& = v_1,	& x_1(b)	& = v_2 .
	\end{align*}
	Then
	\begin{align*}
		\min_{x \in X} \mathbb{S}_{[a, b]} f \left( x, y_0 \right) \leq	& H_{a, b, v_1, v_2} (y_0) \\
		\leq	& \mathbb{S}_{[a, b]} f \left( x_1, y_0 \right) \\
		=	& \left[ \mathbb{S}_{[a, a + D_X - 1]} f(x_1, y_0) \right] + \left[ \mathbb{S}_{[a + D_X, b - D_X]} f(x_1, y_0) \right] + \left[ \mathbb{S}_{[b - D_X + 1, b]} f(x_1, y_0) \right] \\
		=	& \left[ \mathbb{S}_{[a, a + D_X - 1]} f(x_1, y_0) \right] + \left[ \mathbb{S}_{[a + D_X, b - D_X]} f(x_0, y_0) \right] + \left[ \mathbb{S}_{[b - D_X + 1, b]} f(x_1, y_0) \right] \\
		=	& \left[ \mathbb{S}_{[a, a + D_X - 1]} \left(f(x_1, y_0) - f(x_0, y_0) \right) \right] + \left[ \mathbb{S}_{[a, b]} f(x_0, y_0) \right] \\
		& + \left[ \mathbb{S}_{[b - D_X + 1, b]} \left(f(x_1, y_0) - f(x_0, y_0) \right) \right] \\
		\leq	& \left[ \mathbb{S}_{[a, b]} f \left( x_0, y_0 \right) \right] + 4 D_X \|f\| \\
		=	& \left[ \min_{x \in X} \mathbb{S}_{[a, b]} f \left( x, y_0 \right) \right] + 4 D_X \|f\| .
	\end{align*}
\end{proof}

Our goal in this section is to study the qualitative properties of $\alpha$-maximizing measures for locally constant functions $f \in C(X \times Y)$ when $(X, T), (Y, S)$ are transitive shifts of finite type. Theorem \ref{Theta map} provides a sufficient condition for these measures to exist. This is done by taking a (not necessarily locally constant) function $f \in C(X \times Y)$, and choosing a function $g \in C(Y)$ such that $\psi_f(\nu) = \int g \mathrm{d} \nu$ for all $\nu \in \mathcal{M}_S(Y)$, proving in particular that $\psi_f : \mathcal{M}_S(Y) \to \mathbb{R}$ is a continuous affine functional, and therefore has ergodic maximizers. This technique of reducing the adversarial ergodic optimization to a classical ergodic optimization can be generalized further, but that generalization will be the subject of a future work.

\begin{Thm}\label{Theta map}
	Take $(X, T), (Y, S)$ to be shifts, where $(X, T)$ is a transitive shift of finite type. For a continuous $f \in C(X \times Y)$, the functional $\psi_f : \mathcal{M}_S(Y) \to \mathbb{R}$ is continuous and affine. In particular, there exists $\nu_0 \in \partial_e \mathcal{M}_S(Y)$ which is $\alpha$-maximizing.
\end{Thm}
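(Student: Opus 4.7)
The plan is to show that $\psi_f$ is continuous and affine on the compact convex set $\mathcal{M}_S(Y)$; once this is established, Bauer's maximum principle applied to the affine continuous functional $\psi_f$ shows that the supremum is attained at an extreme point of $\mathcal{M}_S(Y)$, which is ergodic. Affineness I would handle directly by disintegration: given $\nu = t\nu_1 + (1-t)\nu_2$ with $\nu_i \in \mathcal{M}_S(Y)$ and $t \in (0,1)$, pick $\lambda$ achieving $\psi_f(\nu)$, disintegrate $\lambda = \int \lambda_y \, d\nu(y)$, and note that the cocycle identity $T_*\lambda_y = \lambda_{Sy}$ holds $\nu$-a.e. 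Since each $\nu_i \ll \nu$, the measures $\lambda_i := \int \lambda_y \, d\nu_i(y)$ are $(T \times S)$-invariant with $(\pi_Y)_*\lambda_i = \nu_i$ and $\lambda = t\lambda_1 + (1-t)\lambda_2$, so $\psi_f(\nu) \geq t\psi_f(\nu_1) + (1-t)\psi_f(\nu_2)$; combined with the convexity from Lemma \ref{Lower semi-continuity}, this gives affineness.

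For continuity I would follow the hint and construct $g \in C(Y)$ such that $\psi_f(\nu) = \int g \, d\nu$ for all $\nu \in \mathcal{M}_S(Y)$. Set $g_k(y) := \frac{1}{k}\min_{x \in X}\mathbb{S}_k f(x, y) \in C(Y)$. Using the periodic specification property of $(X,T)$, equivalent to transitivity of the SFT by Lemma \ref{Transitive SFT dense periodic measures}, one obtains an approximate subadditivity of the form $(k_1+k_2+D) g_{k_1+k_2+D}(y) \leq k_1 g_{k_1}(y) + k_2 g_{k_2}(S^{k_1+D} y) + O(1)$, with error terms uniform in $y$ obtained by concatenating near-minimizers across a specification gap in $X$. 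Lemma \ref{Asymptotic Fekete} then yields pointwise convergence of $g_k$ to some $g$, and the uniformity of the error terms together with the estimate $g_k \circ S - g_k = O(1/k)$ is meant to promote this to uniform convergence, so that $g \in C(Y)$. To verify $\psi_f(\nu) = \int g \, d\nu$: the $\geq$ direction follows from $\mathbb{A}_k f(x,y) \geq g_k(y)$ combined with the pointwise ergodic theorem and Fatou applied along any $\lambda \in (\pi_Y)_*^{-1}\{\nu\} \cap \mathcal{M}_{T \times S}(X \times Y)$; for $\leq$ on ergodic $\nu$, pick a $\nu$-generic $y_0 \in Y$, let $x_k$ realize $g_k(y_0)$, and take a weak-$*$ subsequential limit $\lambda^*$ of $\mathbb{A}_k \delta_{(x_k, y_0)}$, which satisfies $(\pi_Y)_*\lambda^* = \nu$ by genericity and $\int f \, d\lambda^* = \lim_k g_k(y_0) = g(y_0) = \int g \, d\nu$ (the last step using $S$-invariance of $g$ and the ergodic theorem applied to the continuous function $g$). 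The non-ergodic case is handled by the affineness established above.

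The main obstacle is that the pointwise limit $\lim_k g_k(y)$ need not exist for every $y \in Y$: for $X = \{0,1\}^{\mathbb{Z}}$ (the full shift) and $f(x,y) = x(0) + y(0)$ one computes $g_k(y) = \frac{1}{k}\sum_{j=0}^{k-1} y(j)$, which diverges for Birkhoff-non-generic $y$, and yet a continuous $g$ with $\psi_f(\nu) = \int g\,d\nu$ still exists (namely $g(y) = y(0)$). A plausible workaround is to first prove the theorem for locally constant $f$ via a Ma\~n\'e-lemma/dynamic-programming construction built on the adjacency graph of $(X,T)$, in which a value function $V_k(v,y) = \min\{\sum_{j=0}^{k-1} F(x(j),y(j)) : x \in X,\ x(0) = v\}$ can be shown to satisfy $V_k(v,y)/k \to g(y)$ uniformly in $(v,y)$, with $g$ continuous and independent of $v$; one then extends to arbitrary $f \in C(X \times Y)$ by density, since $|\psi_{f_n}(\nu) - \psi_f(\nu)| \leq \|f_n - f\|_\infty$ uniformly in $\nu$, so that continuity and affineness of $\psi_{f_n}$ transfer to $\psi_f$ whenever $f_n \to f$ uniformly.
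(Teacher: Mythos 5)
Your affineness argument via disintegration is fine (and, pleasantly, needs no hypothesis on $(X,T)$), and your reduction to locally constant $f$ via the uniform Lipschitz bound $|\psi_{f_n}(\nu)-\psi_f(\nu)|\le\|f_n-f\|_\infty$ is also valid. The gap is in the continuity step itself. You correctly diagnose that $g_k(y)=\frac1k\min_{x}\mathbb{S}_k f(x,y)$ need not converge pointwise, but your proposed repair --- that for locally constant $f$ the value function $V_k(v,y)/k$ converges uniformly to a continuous $g(y)$ --- fails for exactly the same reason, and indeed on your own example: $f(x,y)=x(0)+y(0)$ is locally constant, and $V_k(v,y)=v+\sum_{j=0}^{k-1}y(j)$, so $V_k(v,y)/k$ diverges at every Birkhoff-non-generic $y$. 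Conditioning on the initial symbol $v$ of $x$ does nothing to tame the $y$-dependence of $f$, which is the actual source of the divergence. No choice of ``natural'' minimizing averages will converge uniformly, because the target $g$ is only determined up to the closed subspace $B_S(Y)=\overline{\operatorname{span}}\left\{h-h\circ S : h \in C(Y)\right\}$, and the approximants drift within that subspace.

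The missing idea is to give up on uniform convergence in $C(Y)$ and instead show that the approximants are Cauchy for the seminorm $p(g)=\sup_{\nu\in\mathcal{M}_S(Y)}\left|\int g\,\mathrm{d}\nu\right|$, i.e.\ converge in the quotient $C(Y)/B_S(Y)$; this is all that is needed to define a continuous affine functional on $\mathcal{M}_S(Y)$. Concretely, the paper fixes a window length $L$, uses transitivity of the SFT $(X,T)$ to select, for each window $y\vert_{-(L+D)}^{L+D}$, a near-minimizing $X$-block with prescribed boundary letters so that the blocks concatenate into a legal point $\tau_f^{(L)}(y)\in X$, and thereby produces for each $\nu$ an invariant measure $\Lambda_f^{(L)}(\nu)\in(\pi_Y)_*^{-1}\{\nu\}$ with $\left|\int f\,\mathrm{d}\Lambda_f^{(L)}(\nu)-\psi_f(\nu)\right|=o(1)$ \emph{uniformly in $\nu$}. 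That uniform-in-$\nu$ estimate on integrals (not uniform-in-$y$ convergence of functions) is what exhibits $\psi_f$ as a uniform limit of continuous affine functionals on $\mathcal{M}_S(Y)$. Note also that your verification of the $\le$ direction quietly assumes that $\lim_k g_k(y_0)$ exists at a generic point and that $g$ is $S$-invariant; neither holds in your example (where $g(y)=y(0)$). If you reroute the argument through the quotient seminorm, the rest of your outline (Bauer's maximum principle yielding an ergodic maximizer) goes through.
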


\begin{proof}
	We prove this result by constructing a continuous (but not necessarily linear) map $\Theta : C(X \times Y) \to C(Y) / B_S(Y)$ such that $\psi_f(\nu) = \int \Theta (f) \mathrm{d} \nu$ for all $f \in C(X \times Y), \nu \in \mathcal{M}_S(Y)$, where
	$$
	B_S(Y) = \overline{\operatorname{span} \left\{ g - g \circ S : g \in C(Y) \right\} } ,
	$$
	and the closure is taken in the uniform norm on $C(Y)$.
	
	For a function $f : X \times Y \to \mathbb{R}$ and integers $a \leq b$, we define the \emph{$[a, b]$-variation of $f$} to be the quantity
	\begin{align*}
		\mathbb{V}_\ell (f)	& : = \sup \left\{ \left| f(x_1, y_1) - f(x_2, y_2) \right| : (x_1, y_1), (x_2, y_2) \in X \times Y , (x_1, y_1) \vert_{-\ell}^\ell = (x_2, y_2) \vert_a^b \right\} .
	\end{align*}
	We can observe that if $\ell' \geq \ell$, then $\mathbb{V}_{\ell'} (f) \leq \mathbb{V}_\ell (f)$. Consequently, the sequence \linebreak$\left( \frac{1}{n + 1} \sum_{i = 0}^{n} \mathbb{V}_{i} (f) \right)_{n = 0}^\infty$ is non-increasing, and converges monotonically to $\lim_{n \to \infty} \mathbb{V}_{n} (f)$. Note that $f$ is continuous if and only if $\lim_{n \to \infty} \mathbb{V}_{n} ( f ) = 0$.
	
	{Take $D \in \mathbb{N}$ to be a constant such that if $x_1, x_2 \in X$, and $b_1, a_2 \in \mathbb{Z}, a_2 - b_1 \geq D$, then there exists $x' \in X$ such that
	\begin{align*}
		x'(j)	& = x_1 (j)	& \textrm{for $j \leq b_1$,} \\
		x'(j)	& = x_2 (j)	& \textrm{for $j \geq a_2$.}
	\end{align*}}
	Fix such a $D$ for the remainder of this proof, and let $f \in C (X \times Y)$ be a continuous function. For integers $a, b \in \mathbb{Z}; a \leq b$, we write
	$$
	\mathcal{L}_a^b (X) : = \left\{ x \vert_a^b \in \mathcal{A}^{ \{a, a + 1, \ldots, b\} } : x \in X \right\} .
	$$
	
	For $L \in \mathbb{N}$, we construct a map $w_f^{(L)} : \mathcal{L}_{-(L + D)}^{L + D}(Y) \to \mathcal{L}_{-(L + D)}^{L + D}(X)$ as follows: Consider a block $B \in \mathcal{L}_{- (L + D)}^{L + D} (Y)$, and choose $s_B \in Y$ such that $s_B \vert_{- (L + D)}^{L + D} = B$. Choose $t_B \in X$ such that
	\begin{align*}
		\mathbb{A}_{[-R_L, R_L]} f \left( t_B, s_B \right)	& = \min_{x \in X} \mathbb{A}_{[-R_L, R_L]} f \left( x, s_B \right) ,
	\end{align*}
	where $R_L = L + D$. Fix a letter $a_0 \in \mathcal{A}$ such that $\exists x \in X \; (x(0) = a_0)$. Let $\tilde{t}_B \in X$ be a point in $X$ such that
	\begin{align*}
		\tilde{t}_B \vert_{- L}^L	& = t_B \vert_{- L}^L ,	& \tilde{t}_B(- R_L)	& = a_0 ,	& \tilde{t}_B (R_L + 1)	& = a_0 .
	\end{align*}
	Write
	$$
	w_f^{(L)} (B) = \tilde{t}_B \vert_{ - R_L }^{R_L} \in \mathcal{L}_{-R_L}^{R_L} (X) .
	$$
	
	We now construct a map $ \theta_f^{(L)} : Y \to X \times Y$ as follows. For $n \in \mathbb{Z}, B \in \mathcal{L}_a^b (X)$, let $n \oplus B \in \mathcal{L}_{a - n}^{b - n} (X)$ be the block $\left( n \oplus B \right) (j) = B(j - n)$. Let $\tau_f^{(L)} : Y \to X$ be the map
	\begin{align*}
		\left( \tau_f^{(L)} (y) \right) (j)	& = w_f^{(L)} \left( Q_L n \oplus \left( y \vert_{Q_L n - R_L}^{Q_L n + R_L} \right) \right) (j - Q_L n) ,	& \textrm{for $n \in \Z, j \in [- R_L, R_L] + Q_L n$,}
	\end{align*}
	{where $Q_L = 2 R_L + 1$.} This means that for every $y \in Y$, there exists $y' \in Y$ such that $y' \vert_{R_L}^{R_L} = y \vert_{- R_L}^{R_L}$, along with $x' \in X$ such that $x' \vert_{- R_L + D}^{R_L - D} = \tau (y) \vert_{- R_L + D}^{R_L - D}$, for which
	\begin{align*}
		\mathbb{A}_{[-R_L, R_L]} f \left( x' , y' \right)	& = \min_{x \in X} \mathbb{A}_{[-R_L, R_L]} f \left( x , y' \right) .
	\end{align*}
	
	Fix $\nu \in \mathcal{M}_S(Y)$, and take $\lambda \in \mathcal{M}_{T \times S}(X \times Y) \cap (\pi_Y)_*^{-1} \{\nu\}$. To improve the readability of this computation, we set
	\begin{align*}
		\theta_f^{(L)}	& = \tau_f^{(L)} \times \operatorname{id}_Y , \\
		\Lambda_f^{(L)}(\nu)	& = \mathbb{A}_{[R_L, R_L]} \left( \theta_f^{(L)} \right)_* \nu	& \in \mathcal{M}_{T \times S}(X \times Y) \cap (\pi_Y)_*^{-1} \{\nu\} .
	\end{align*}
	Set $B (y) = y \vert_{- R_L}^{R_L}$, and let $s_B \in Y, t_B \in X, \tilde{t}_B \in X$ as earlier. Then
	\begin{align*}
		\int f \mathrm{d} \Lambda_f^{(L)}(\nu) - \int f \mathrm{d} \lambda =	& \int f \mathrm{d} \Lambda_f^{(L)}(\nu) - \frac{1}{Q_L} \sum_{j = - R_L}^{R_L} \int f \circ (T \times S)^j \mathrm{d} \lambda \\
		=	& \int \frac{1}{Q_L} \sum_{j = - R_L}^{R_L} \left[ f \left( (T \times S)^j \theta_f^{(L)} (y) \right) - f \left( T^j x, S^j y \right) \right] \mathrm{d} \lambda(x, y) \\
		\addtocounter{equation}{1}\tag{\theequation}\label{Term 1}	=	& \int \frac{1}{Q_L} \sum_{j = - R_L}^{R_L} \left[ f \left( (T \times S)^j \theta_f^{(L)} (y) \right) - f \left( T^j t_{B(y)} , S^j s_{B(y)} \right) \right] \mathrm{d} \lambda(x, y) \\
		\addtocounter{equation}{1}\tag{\theequation}\label{Term 2}	& + \int \frac{1}{Q_L} \sum_{j = - R_L}^{R_L} \left[ f \left( T^j t_{B(y)} , S^j s_{B(y)} \right) - f \left( T^j x, S^j s_{B(y)} \right) \right] \mathrm{d} \lambda(x, y) \\
		\addtocounter{equation}{1}\tag{\theequation}\label{Term 3}	& + \int \frac{1}{Q_L} \sum_{j = - R_L}^{R_L} \left[ f \left( T^j x , S^j s_{B(y)} \right) - f \left( T^j x, S^j y \right) \right] \mathrm{d} \lambda(x, y) .
	\end{align*}
	We estimate these three terms separately. For term (\ref{Term 1}), we can see that because $\left( t_B, s_B \right) \vert_{- L}^L = \theta_f^{(L)}(y) \vert_{- L}^L$, it follows that for $|j| \leq L$, we have that $(T \times S)^j \theta_f^{(L)} (y) \vert_{- L + |j|}^{L - |j|} = \left( t_B, s_B \right) \vert_{- L + |j|}^{L - |j|}$. Consequently, it follows that
	\begin{align*}
		& \left| \int \frac{1}{Q_L} \sum_{j = - R_L}^{R_L} \left[ f \left( (T \times S)^j \theta_f^{(L)} (y) \right) - f \left( T^j t_{B(y)} , S^j s_{B(y)} \right) \right] \mathrm{d} \lambda (x, y) \right| \\
		\leq	& \frac{1}{Q_L} \sum_{j = - R_L}^{R_L} \int \left| f \left( (T \times S)^j \theta_f^{(L)} (y) \right) - f \left( T^j t_{B(y)} , S^j s_{B(y)} \right) \right| \mathrm{d} \lambda (x, y) \\
		=	& \left[ \frac{1}{Q_L} \sum_{j = - L - D}^{- L - 1} \int \left| f \left( (T \times S)^j \theta_f^{(L)} (y) \right) - f \left( T^j t_{B(y)} , S^j s_{B(y)} \right) \right| \mathrm{d} \lambda (x, y) \right] \\
		& + \left[ \frac{1}{Q_L} \sum_{j = - L}^L \int \left| f \left( (T \times S)^j \theta_f^{(L)} (y) \right) - f \left( T^j t_{B(y)} , S^j s_{B(y)} \right) \right| \mathrm{d} \lambda (x, y) \right] \\
		& + \left[ \frac{1}{Q_L} \sum_{j = L + 1}^{L + D} \int \left| f \left( (T \times S)^j \theta_f^{(L)} (y) \right) - f \left( T^j t_{B(y)} , S^j s_{B(y)} \right) \right| \mathrm{d} \lambda (x, y) \right] \\
		\leq	& \frac{2 D}{Q_L} \|f\| + \left[ \frac{1}{Q_L} \sum_{j = - L}^L \mathbb{V}_{L - |j|} (f) \right] + \frac{2 D}{Q_L} \|f\| \\
		\leq	& \frac{4 D}{Q_L} \|f\| + \frac{4}{L + 1} \sum_{i = 0}^{L} \mathbb{V}_i (f) .
	\end{align*}
	For term (\ref{Term 2}), we see that
	\begin{align*}
		& \int \frac{1}{Q_L} \sum_{j = - R_L}^{R_L} \left[ f \left( T^j t_{B(y)} , S^j s_{B(y)} \right) - f \left( T^j x, S^j s_{B(y)} \right) \right] \mathrm{d} \lambda (x, y) \\
		=	& \int \frac{1}{Q_L} \sum_{j = - R_L}^{R_L} f \left( T^j t_{B(y)} , S^j s_{B(y)} \right) \mathrm{d} \lambda (x, y) - \int \frac{1}{Q_L} \sum_{j = - R_L}^{R_L} f \left( T^j x, S^j s_{B(y)} \right) \mathrm{d} \lambda (x, y) \\
		=	& \int \min_{x' \in X} \frac{1}{Q_L} \sum_{j = - R_L}^{R_L} f \left( T^j x' , S^j s_{B(y)} \right) \mathrm{d} \lambda (x, y) - \int \frac{1}{Q_L} \sum_{j = - R_L}^{R_L} f \left( T^j x, S^j s_{B(y)} \right) \mathrm{d} \lambda (x, y)	& \leq 0 .
	\end{align*}
	Finally, for term (\ref{Term 3}), we see that
	\begin{align*}
		& \left| \int \frac{1}{Q_L} \sum_{j = - R_L}^{R_L} \left[ f \left( T^j x , S^j s_{B(y)} \right) - f \left( T^j x, S^j y \right) \right] \mathrm{d} \lambda (x, y) \right| \\
		\leq	& \int \frac{1}{Q_L} \sum_{j = - R_L}^{R_L} \left| f \left( T^j x , S^j s_{B(y)} \right) - f \left( T^j x, S^j y \right) \right| \mathrm{d} \lambda (x, y) \\
		\leq	& \frac{1}{Q_L} \sum_{j = - R_L}^{R_L} \mathbb{V}_{R_L - |j|}(f)	& \leq \frac{4}{R_L + 1} \sum_{i = 0}^{R_L} \mathbb{V}_i (f) .
	\end{align*}
	Therefore, we see that
	\begin{align*}
		\int f \mathrm{d} \Lambda_f^{(L)}(\nu) - \int f \mathrm{d} \lambda	& \leq \frac{4 D}{Q_L} \|f\| + \left[ \frac{4}{L + 1} \sum_{i = 0}^{L} \mathbb{V}_i (f) \right] + \left[ \frac{4}{R_L + 1} \sum_{i = 0}^{R_L} \mathbb{V}_i (f) \right] \\
		& \leq \frac{4 D}{Q_L} \|f\| + 8 \left[ \frac{1}{L + 1} \sum_{i = 0}^{L} \mathbb{V}_i (f) \right] .
	\end{align*}
	
	For $\nu \in \mathcal{M}_{S} (Y)$, take $\lambda_0 \in \mathcal{M}_{T \times S}(X \times Y) \cap(\pi_Y)_*^{- 1} \{\nu\}$ such that $\int f \mathrm{d} \lambda_0 = \psi_f (\nu) $. Then
	\begin{align*}
		\int f \mathrm{d} \lambda_0 \leq \int f \mathrm{d} \Lambda_f^{(L)} (\nu) \leq \int f \mathrm{d} \lambda_0 + \frac{4 D}{Q_L} \|f\| + 8 \left[ \frac{1}{L + 1} \sum_{i = 0}^{L} \mathbb{V}_i (f) \right] .
	\end{align*}
	Thus
	\begin{align*}
		\left| \int f \mathrm{d} \Lambda_f^{(L)}(\nu) - \psi_f(\nu) \right|	& \leq \frac{4 D}{Q_L} \|f\| + 8 \left[ \frac{1}{L + 1} \sum_{i = 0}^{L} \mathbb{V}_i (f) \right] = o(1) ,
	\end{align*}
	where this estimate is uniform in $\nu \in \mathcal{M}_S (Y)$.
	
	It follows therefore that the sequence $\left( f^{(L)} \right)_{L = 1}^\infty = \left( \left( \mathbb{A}_{[-R_L, R_L]} f \right) \circ \theta_f^{(L)} \right)_{L = 1}^\infty$ is Cauchy with respect to the seminorm $p : C(Y) \to [0, + \infty)$ on $C(Y)$ given by
	\begin{align*}
	p (g) = \sup_{\nu \in \mathbb{R} \mathcal{M}_S(Y) \setminus \{0\}} \int g \mathrm{d} \nu / \|\nu\| = \sup_{\nu \in \mathcal{M}_S(Y)} \left| \int g \mathrm{d} \nu \right| ,
	\end{align*}
	since for $L_1, L_2 \in \mathbb{N}$, we have
	\begin{align*}
	p \left( f^{(L_1)} - f^{(L_2)} \right) =	& \sup_{\nu \in \mathcal{M}_S(Y)} \left| \int \left( f^{(L_1)} - f^{(L_2)} \right) \right| \mathrm{d} \nu \\
	\leq	& \left[ \sup_{\nu \in \mathcal{M}_S(Y)} \left| \int \left( f^{(L_1)} - \psi_f(\nu) \right) \mathrm{d} \nu \right| \right] + \left[ \sup_{\nu \in \mathcal{M}_S(Y)} \left| \int \left( \psi_f(\nu) - f^{(L_2)} \right) \mathrm{d} \nu \right| \right] \\
	\leq	& \frac{4 D}{Q_{L_1}} \|f\| + 8 \left[ \frac{1}{L_1 + 1} \sum_{i = 0}^{L_1} \mathbb{V}_i (f) \right] + \frac{4 D}{Q_{L_2}} \|f\| + 8 \left[ \frac{1}{L_2 + 1} \sum_{i = 0}^{L_2} \mathbb{V}_i (f) \right] \\
	\stackrel{\min\{L_1, L_2\} \to \infty}{\to}	& 0 .
	\end{align*}
	This seminorm $p$ is exactly the quotient norm on $C(Y) / B_S(Y)$, so the sequence $\left( f^{(L)} \right)_{L = 1}^\infty$ is Cauchy in $C(Y) / B_S(Y)$, and therefore convergent. Thus we can define a map $\Theta : C(X \times Y) \to C(Y) / B_S (Y)$ by $\Theta : f \mapsto \lim_{L \to \infty} f^{(L)}$, where this limit is taken in $C(Y) / B_S(Y)$. This map has the property that $\int \Theta (f) \mathrm{d} \nu = \psi_f(\nu)$ for all $\nu \in \mathcal{M}_S(Y)$. Therefore, the functional $\psi_f : \nu \mapsto \int \Theta (f) \mathrm{d} \nu$ is continuous and affine for fixed $f \in C(X \times Y)$. Our final claim follows from Choquet's theorem, since a continuous affine functional on a compact convex set attains a maximum on an extreme point.
\end{proof}

\begin{Prop}\label{Measures supported on ground states are optimizing}
	Consider $1$-step shifts of finite type $(X, T), (Y, S)$ over a finite alphabet $\mathcal{A}$, and a locally constant function $f \in C(X \times Y)$ for which there exists $F : \mathcal{A}^2 \to \mathbb{R}$ such that
	\begin{align*}
		f(x, y)	& = F \left( x (0) , y (0) \right)	& \textrm{for all $(x, y) \in X \times Y$.}
	\end{align*}
	Suppose that $(X, T), (Y, S)$ are both transitive. Consider further $C \geq 0$. If $\nu_0 \in \mathcal{M}_{S}(Y)$ is supported on $Y_{f, C}$, then $\nu_0$ is $\alpha$-maximizing.
\end{Prop}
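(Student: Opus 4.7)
The plan is to establish the nontrivial inequality $\psi_f(\nu_0) \geq \alpha(f)$, since $\psi_f(\nu_0) \leq \alpha(f)$ is immediate from the definition. Because $(X, T)$ and $(Y, S)$ are transitive shifts of finite type, both have the periodic specification property by Lemma \ref{Transitive SFT dense periodic measures}, so Theorem \ref{Identity for alpha and delta under periodic specification} yields $\alpha(f) = \delta(f)$. It therefore suffices to show $\psi_f(\nu_0) \geq \delta(f)$.

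The first step is a pointwise estimate: for every $y \in Y_{f, C}$,
\[
\lim_{N \to \infty} \min_{x \in X} \mathbb{A}_N f(x, y) = \delta(f).
\]
The upper bound is automatic, since $\min_x \mathbb{A}_N f(x, y) \leq \max_{y^*} \min_x \mathbb{A}_N f(x, y^*)$ and the right-hand side converges to $\delta(f)$ by Proposition \ref{Delta limit exists for spec property}. For the lower bound, I would fix $N$ large and apply the defining property of $Y_{f, C}$ to the window $[0, N-1]$, producing $(v_1, v_2) \in P_{0, N-1}$ with $H_{0, N-1, v_1, v_2}(y) \geq H_{0, N-1, v_1, v_2}(y') - C$ for every $y'$ agreeing with $y$ off $[0, N-1]$. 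Lemma \ref{Minimum comparison estimate} then lets me replace each $H$ by the corresponding unconstrained $\min_x \mathbb{S}_{[0, N-1]} f(x, \cdot)$ at a cost of $O(D_X \|f\|)$. To pass from the constrained maximum over such $y'$ to the unconstrained maximum over all $y^* \in Y$, I would use the periodic specification property of $(Y, S)$, with gap constant $D_Y$, to build from any $y^*$ a point $y' \in Y$ agreeing with $y$ off $[0, N-1]$ and with $y^*$ on the subinterval $[D_Y, N-1-D_Y]$; a straightforward boundary estimate then gives $\min_x \mathbb{S}_{[0, N-1]} f(x, y') \geq \min_x \mathbb{S}_{[0, N-1]} f(x, y^*) - O(D_Y \|f\|)$. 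Dividing by $N$ and sending $N \to \infty$ makes the additive errors vanish and yields the liminf bound.

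To finish, I invoke Theorem \ref{Theta map}, which provides $\psi_f(\nu_0) = \lim_{L \to \infty} \int f^{(L)} \, d\nu_0$, where $f^{(L)}(y) = \mathbb{A}_{[-R_L, R_L]} f(\tau_f^{(L)}(y), y)$. Since $\tau_f^{(L)}(y) \in X$, the trivial lower bound $f^{(L)}(y) \geq \min_{x \in X} \mathbb{A}_{[-R_L, R_L]} f(x, y)$ holds. The shift-invariance of $Y_{f, C}$, combined with the pointwise estimate above (applied to $S^{-R_L} y$, which again lies in $Y_{f, C}$), shows that $\min_x \mathbb{A}_{[-R_L, R_L]} f(x, y) \to \delta(f)$ for $\nu_0$-a.e.\ $y$. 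Since these quantities are uniformly bounded by $\|f\|$, dominated convergence gives
\[
\psi_f(\nu_0) = \lim_L \int f^{(L)} \, d\nu_0 \geq \lim_L \int \min_x \mathbb{A}_{[-R_L, R_L]} f(x, y) \, d\nu_0(y) = \delta(f) = \alpha(f),
\]
which is the desired inequality.

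The main technical obstacle will be the pointwise estimate in the second paragraph, and within it the use of periodic specification of $(Y, S)$ to remove the constraint that the comparison point $y'$ agree with $y$ off the window $[0, N-1]$, so that the local optimality condition defining $Y_{f, C}$ can be compared against the unconstrained maximum that defines $\delta(f)$. Careful bookkeeping of the $O(D_X \|f\|)$ and $O(D_Y \|f\|)$ boundary errors is required to ensure they vanish after dividing by $N$.
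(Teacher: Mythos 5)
Your proposal is correct, but it takes a genuinely different route from the paper. The paper argues by contraposition on the ergodic case: if an ergodic $\nu_0$ is not $\alpha$-maximizing, it uses lower semi-continuity of $\psi_f$ (Lemma \ref{Lower semi-continuity}) and density of periodic measures to find a strictly better periodic orbit $y_1$, then takes a generic point $y_0$ for a minimizing lift of $\nu_0$ and shows, via Lemma \ref{Minimum comparison estimate}, that splicing a long window of $y_1$ into $y_0$ is an $(f,C)$-improvement, so $\operatorname{supp}(\nu_0) \not\subseteq Y_{f,C}$; the non-ergodic case is then handled by ergodic decomposition together with affineness of $\psi_f$ from Theorem \ref{Theta map}. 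You instead prove the direct, stronger pointwise statement that every $y \in Y_{f,C}$ satisfies $\lim_{N} \min_{x} \mathbb{A}_N f(x,y) = \delta(f)$ --- combining the defining non-improvability of $Y_{f,C}$, the sandwich of Lemma \ref{Minimum comparison estimate}, and a gluing argument in $Y$ to pass from the constrained to the unconstrained maximum --- and then integrate, using $\alpha = \delta$ from Theorem \ref{Identity for alpha and delta under periodic specification} and Proposition \ref{Delta limit exists for spec property}. Your route imports more of the Section \ref{Specification section} machinery but dispenses with the periodic-measure approximation and treats ergodic and non-ergodic measures uniformly in one pass; it also yields the uniform estimate $\min_x \mathbb{S}_{[0,N-1]} f(x,y) \geq \max_{y^*} \min_x \mathbb{S}_{[0,N-1]} f(x,y^*) - C - O\bigl( (D_X + D_Y)\|f\| \bigr)$ on all of $Y_{f,C}$, which is of independent interest.

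Two small points to tighten. First, in the final dominated-convergence step you apply the pointwise limit at the moving base point $S^{-R_L} y$; as stated, pointwise convergence at each fixed point of $Y_{f,C}$ does not formally cover a sequence of varying points. This is harmless because the estimate above is uniform over $Y_{f,C}$ (the error constant does not depend on the point), or alternatively because $S$-invariance of $\nu_0$ lets you rewrite $\int \min_x \mathbb{A}_{[-R_L,R_L]} f(x,y)\, \mathrm{d}\nu_0(y) = \int \min_x \mathbb{A}_{2R_L+1} f(x,y)\, \mathrm{d}\nu_0(y)$ and then apply dominated convergence at fixed points --- but say which. Second, your appeal to Theorem \ref{Theta map} in the last step can be avoided entirely: for any $\lambda \in \mathcal{M}_{T\times S}(X\times Y)$ with $(\pi_Y)_*\lambda = \nu_0$, the Birkhoff theorem gives $\int f\, \mathrm{d}\lambda = \int \lim_k \mathbb{A}_k f\, \mathrm{d}\lambda \geq \int \liminf_k \min_{x'} \mathbb{A}_k f(x', y)\, \mathrm{d}\lambda(x,y) = \delta(f)$, since $\lambda$-almost every $(x,y)$ has $y \in Y_{f,C}$; this makes the argument independent of the continuity and affineness of $\psi_f$.
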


\begin{proof}
{Throughout this proof, take $D_X \in \mathbb{N}$ to be a constant such that if $x_1, x_2 \in X$, and $b_1, a_2 \in \mathbb{Z}, a_2 - b_1 \geq D_X$, then there exists $x' \in X$ such that
\begin{align*}
	x'(j)	& = x_1 (j)	& \textrm{for $j \leq b_1$,} \\
	x'(j)	& = x_2 (j)	& \textrm{for $j \geq a_2$.}
\end{align*}}
	Fix such a $D_X$ for the remainder of this proof, noting that this $D_X$ is the constant of the same name from Lemma \ref{Minimum comparison estimate}. Fix $C \geq 0$.
	
	We break the proof into two cases, namely the case where $\nu_0$ is ergodic, and the case where $\nu_0$ is not necessarily ergodic.
	
	\textbf{The ergodic case:} We'll prove the contrapositive, i.e. that if $\nu_0$ is ergodic and $\psi_f(\nu_0) < \alpha(f)$, then $\operatorname{supp}(\nu_0) \setminus Y_{f, C} \neq \emptyset$. Suppose first that
	$$\psi_f(\nu_0) < \alpha(f) .$$
	Choose $\nu_1 \in \mathcal{M}_S^{co} (Y)$ such that
	$$\psi_f(\nu_0) < \psi_f(\nu_1) .$$
	This is possible because $\psi_f : \mathcal{M}_S(Y) \to \mathbb{R}$ is lower semi-continuous (Lemma \ref{Lower semi-continuity}), and $\mathcal{M}_S^{co}(Y)$ is dense in $\mathcal{M}_S(Y)$ (Lemma \ref{Transitive SFT dense periodic measures}). Take $y_1 \in Y, s_1 \in \mathbb{N}$ such that $S^{s_1} y_1 = y_1$ and $\nu_1 = \mathbb{A}_{s_1} \delta_{y_1}$. For each $k \in \mathbb{N}$, choose $x_k \in X$ such that
	\begin{align*}
	\mathbb{S}_k f(x_k, y_1)	& = \min_{x \in X} \mathbb{S}_k f(x, y_1) .
	\end{align*}
	Take $k_1 < k_2 < \cdots$ such that $\lim_{n \to \infty} \mathbb{A}_{k_n} f \left( x_{k_n}, y_1 \right) = \liminf_{k \to \infty} \mathbb{A}_k f \left( x_k, y_1 \right)$, and such that $\lambda_1 = \lim_{n \to \infty} \mathbb{A}_{k_n} \delta_{\left( x_{k_n}, y_1 \right)}$ exists. We claim
	$$\int f \mathrm{d} \lambda_1 = \psi_f(\nu_1) .$$
	Clearly $\int f \mathrm{d} \lambda_1 \geq \psi_f(\nu_1)$, so we'll prove the opposite inequality. Fix $\lambda \in \left( \partial_e \mathcal{M}_{T \times S}(X \times Y) \right) \cap (\pi_Y)_*^{-1} \{\nu_1\}$. Because $y_1 \in \operatorname{Per}_S(Y)$, we can choose $\left( x^{(\lambda)}, y_1 \right) \in \operatorname{supp}(\lambda)$ such that \linebreak$\lambda = \lim_{k \to \infty} \mathbb{A}_k \delta_{\left( x^{(\lambda)} , y_1 \right)}$. Then
	\begin{align*}
	\int f \mathrm{d} \lambda	& = \liminf_{k \to \infty} \mathbb{A}_k f \left( x^{(\lambda)} , y_1 \right) \geq \liminf_{k \to \infty} \min_{x \in X} \mathbb{A}_k f \left( x , y_1 \right) = \lim_{n \to \infty} \mathbb{A}_{k_n} f \left( x_{k_n} , y_1 \right) = \int f \mathrm{d} \lambda_1 .
	\end{align*}
	Lemma \ref{Sup on ergodic measures} then tells us that
	\begin{align*}
	\psi_f(\nu_1) = \inf \left\{ \int f \mathrm{d} \lambda : \lambda \in \left( \partial_e \mathcal{M}_{T \times S}(X \times Y) \right) \cap (\pi_Y)_*^{-1} \{\nu\} \right\} \geq \int f \mathrm{d} \lambda_1 .
	\end{align*}
	Therefore $\int f \mathrm{d} \lambda_1 = \psi_f(\nu_1)$.
	
	Next, choose $\lambda_0 \in \partial_e \mathcal{M}_{T \times S}(X \times Y) \cap (\pi_Y)_*^{-1} \{\nu_0\}$ such that
	$$\int f \mathrm{d} \lambda_0 = \psi_f(\nu_0) < \psi_f(\nu_1) .$$
	We can choose $(x_0, y_0) \in \operatorname{supp}(\lambda_0)$ such that $\lambda_0 = \lim_{k \to \infty} \mathbb{A}_k \delta_{\left(x_0, y_0\right)}$. Then
	\begin{align*}
	\nu_i	& = \lim_{k \to \infty} \mathbb{A}_k \delta_{y_i} ,	& y_i	& \in \operatorname{supp}(\nu_i)	& \textrm{for both $i = 0, 1$.}
	\end{align*}
	
	Set $\epsilon_0 = \frac{1}{2} \left( \int f \mathrm{d} \lambda_1 - \int f \mathrm{d} \lambda_0 \right) = \frac{1}{2} \left( \psi_f(\nu_1) - \psi_f(\nu_0) \right) > 0$, and fix $N_1 \in \mathbb{N}$ such that if $n \geq N_1$, then
	\begin{align*}
		\left[ \mathbb{A}_{k_n} f \left( x_{k_n}, y_1 \right) \right] - \left[ \mathbb{A}_{k_n} f \left( x_0, y_0 \right) \right]	& > \epsilon_0 .
	\end{align*}
	Take $N_2 = \left\lceil \frac{C + 1031 + 4 D_X \|f\|}{\epsilon_0} \right\rceil $, and set $N = \max \{N_1, N_2\} $. For $n \geq N$, consider a pair $(v_1, v_2) \in P_{0, k_n - 1}$. 
	\begin{align*}
		H_{0, k_n - 1, v_1, v_2} (y_0)	& \leq \left[ \min_{x \in X} \mathbb{S}_{k_n} f \left( x, y_0 \right) \right] + 4 D_X \|f\|	& \textrm{[Lemma \ref{Minimum comparison estimate}]} \\
		& \leq \left[ \mathbb{S}_{k_n} f(x_0, y_0) \right] + 4 D_X \|f\| \\
		& = k_n \left[ \mathbb{A}_{k_n} f \left( x_0, y_0 \right) \right] + 4 D_X \|f\| \\
		& < k_n \left( \left[ \mathbb{A}_{k_n} f \left( x_{k_n}, y_1 \right) \right] - \epsilon_0 \right) + 4 D_X \|f\|	& \textrm{[$n \geq N_1$]} \\
		& = \left[ \mathbb{S}_{k_n} f \left( x_{k_n}, y_1 \right) \right] + 4 D_X \|f\| - k_n \epsilon_0 \\
		& = \left[ \min_{x \in X} \mathbb{S}_{k_n} f \left( x_{k_n} , y_1 \right) \right] + 4 D_X \|f\| - k_n \epsilon_0 \\
		& \leq H_{0, k_n - 1, v_1, v_2}(y_1) + 4 D_X \|f\| - k_n \epsilon_0	& \textrm{[Lemma \ref{Minimum comparison estimate}]} \\
		& \leq H_{0, k_n - 1, v_1, v_2} (y_1) - C - 1031	& \textrm{[$k_n \geq n \geq N_2$]} \\
		& < H_{0, k_n - 1, v_1, v_2} (y_1) - C .
	\end{align*}
	Since these estimates were independent of $(v_1, v_2) \in P_{0, k_n - 1}$, it follows that $y_0$ is $(f, C)$-improved on $[0, k_N - 1]$ by $y_1$, meaning that $y_0 \in \operatorname{supp}(\nu_0) \setminus Y_{f, C}$.
	
	\textbf{The general case:} Consider now $\nu_0 \in \mathcal{M}_S(Y)$ such that $\nu_0(Y_{f, C}) = 1$. Consider the ergodic decomposition $\nu_0 = \int_{ \partial_e \mathcal{M}_S(Y) } \nu \mathrm{d} \rho(\nu)$, where $\rho$ is a Borel probability measure on $\partial_e \mathcal{M}_S(Y)$. If $\nu_0(Y_{f, C}) = 1$, then $\nu(Y_{f, C}) = 1$ for $\rho$-almost all $\nu \in \partial_e \mathcal{M}_S(Y)$. Thus, as we have already shown, we have that $\alpha(f) = \psi_f(\nu)$ for $\rho$-almost all $\nu \in \partial_e \mathcal{M}_S(Y)$. Finally, it follows from Theorem \ref{Theta map} that $\psi_f$ is continuous and affine, meaning that $\psi_f(\nu_0) = \int_{\partial_e \mathcal{M}_S(Y)} \psi_f(\nu) \mathrm{d} \rho(\nu) = \alpha(f)$.
\end{proof}

\begin{Prop}\label{Optimizing measures supported on ground states}
	Consider transitive $1$-step shifts of finite type $(X, T), (Y, S)$ over a finite alphabet $\mathcal{A}$, and a locally constant function $f \in C(X \times Y)$ for which there exists $F : \mathcal{A}^2 \to \mathbb{R}$ such that
	\begin{align*}
		f(x, y)	& = F \left( x (0) , y (0) \right)	& \textrm{for all $(x, y) \in X \times Y$.}
	\end{align*}
	Consider an invariant measure $\nu_0 \in \mathcal{M}_S (Y)$. For any $C \geq 0$, if $\nu_0 \in \mathcal{M}_S(Y)$ is $\alpha$-maximizing, then $\nu_0$ is supported on $Y_{f, C}$.
\end{Prop}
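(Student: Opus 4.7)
The plan is to argue by contrapositive: assume $\nu_0(Y \setminus Y_{f,C}) > 0$ and produce a measure $\nu_1 \in \mathcal{M}_S(Y)$ with $\psi_f(\nu_1) > \psi_f(\nu_0)$, contradicting $\alpha$-maximality. Since $Y_{f,C}$ is closed and $S$-invariant (Proposition \ref{Y_{f, C} closed}), every ergodic $\nu \in \partial_e \mathcal{M}_S(Y)$ satisfies $\nu(Y_{f,C}) \in \{0, 1\}$. Because $\psi_f$ is continuous and affine (Theorem \ref{Theta map}), the identities $\psi_f(\nu_0) = \int \psi_f(\nu)\,d\rho(\nu)$ and $\nu_0(Y_{f,C}) = \int \nu(Y_{f,C})\,d\rho(\nu)$ hold for the ergodic decomposition $\nu_0 = \int \nu\, d\rho(\nu)$, so it suffices to show that any ergodic $\nu$ with $\nu(Y \setminus Y_{f,C}) = 1$ satisfies $\psi_f(\nu) < \alpha(f)$.

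Fix such an ergodic $\nu$. Writing $Y \setminus Y_{f,C} = \bigcup_{a \leq b} B_{a, b}$ as a countable union, some $[a_0, b_0]$ has $\rho_0 := \nu(B_{a_0, b_0}) > 0$. Each function $H_{a_0, b_0, v_1, v_2}(y)$ and each maximum $\max_{y' \sim y} H_{a_0, b_0, v_1, v_2}(y')$ depends on only finitely many coordinates of $y$, so the improvement margin $\Delta(y) := \min_{(v_1, v_2) \in P_{a_0, b_0}}\bigl[\max_{y' \sim y} H_{a_0, b_0, v_1, v_2}(y') - H_{a_0, b_0, v_1, v_2}(y)\bigr]$ takes only finitely many values. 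Hence there is $\epsilon_* > C$ with $\Delta \geq \epsilon_*$ throughout $B_{a_0, b_0}$. By Birkhoff's theorem a $\nu$-generic $y_0$ has $S^j y_0 \in B_{a_0, b_0}$ for a density-$\rho_0$ set of $j$; a greedy packing then produces pairwise disjoint translated intervals $I_i = [a_0 + j_i, b_0 + j_i]$ of asymptotic density at least $\rho_0 / \ell$, where $\ell = b_0 - a_0 + 1$.

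The heart of the proof is the construction of $y_0' \in Y$ agreeing with $y_0$ off $\bigcup_i I_i$ and, on each $I_i$, chosen so that $\min_{x \in X} \mathbb{S}_N f(x, y_0') \geq \min_{x \in X} \mathbb{S}_N f(x, y_0) + cN$ for some $c > 0$ independent of $N$. Once this is in hand, a weak-$*$ limit $\nu_1$ of $(\mathbb{A}_N \delta_{y_0'})_N$ lies in $\mathcal{M}_S(Y)$, and comparing $\mathbb{A}_N g^{(L)}$ on $y_0'$ with $\mathbb{A}_N g^{(L)}$ on $y_0$ (using that $g^{(L)}(y) \approx \min_x \mathbb{A}_{[-R_L, R_L]} f(x, y)$ up to boundary error uniformly in $y$, from the proof of Theorem \ref{Theta map}) transfers the orbital gain into $\psi_f(\nu_1) \geq \psi_f(\nu) + c > \alpha(f)$. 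The fact that $\lim_N \min_x \mathbb{A}_N f(x, y_0) = \psi_f(\nu)$ for $\nu$-generic $y_0$, which falls out of the argument used in the ergodic case of Proposition \ref{Measures supported on ground states are optimizing}, provides the baseline for this comparison.

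The main obstacle lies in the construction of $y_0'$: the membership $y \in B_{a_0, b_0}$ provides only \emph{per}-$(v_1, v_2)$ improvability, not a single block that uniformly raises $H_{a_0, b_0, v_1, v_2}(\cdot)$ across all $(v_1, v_2) \in P_{a_0, b_0}$. I plan to finesse this by exploiting the dynamic-programming factorization $\min_x \mathbb{S}_N f(x, \cdot) = \min_{(u_k)_k \text{ coherent}} \sum_k H_{I_k, u_k, u_{k+1}}(\cdot)$, tailoring the replacement on each $I_i$ to the specific boundary pair $(u_i, u_{i+1})$ realized by the minimizer, and using an LP-duality / minimax argument on $P_{a_0, b_0}$ (or equivalently a suitable averaged choice) to guarantee that the combined effect raises the overall constrained minimum by a positive fraction of $\epsilon_*$ per improvement interval. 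Combined with the density estimate for $(I_i)$, this yields the required $\Omega(N)$ growth and closes the proof.
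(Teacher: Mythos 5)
Your outline reproduces the paper's architecture (contrapositive, an interval $[a,b]$ of improvability seen with positive frequency along a generic orbit, block replacement, comparison of constrained minima), but the step you yourself flag as ``the main obstacle'' is exactly the step that is missing, and your proposed fix does not work. The paper does not need any LP-duality or averaging here: non-membership in $Y_{f,C}$ is used in the form of a \emph{single} $(f,C)$-improvement $y_1'$, i.e.\ one configuration agreeing with $y_1$ off $[a,b]$ with $H_{a,b,v_1,v_2}(y_1)+C < H_{a,b,v_1,v_2}(y_1')$ for \emph{every} $(v_1,v_2)\in P_{a,b}$ simultaneously (this is how the definition of $(f,C)$-improvement is phrased, and it is the reading used throughout Section \ref{Locally constant secction}, e.g.\ in Proposition \ref{Measures supported on ground states are optimizing}, which exhibits a single improving $y_1$). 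Your minimax/averaged-choice idea cannot substitute for this: the candidate replacement blocks form a finite discrete set with no convex structure to mix over, and the minimizing $x$ selects its boundary letters \emph{after} $y_0'$ is fixed, so a replacement tailored to one boundary pair may strictly decrease $H_{a,b,v_1',v_2'}$ for another pair and hence decrease the unconstrained minimum $\min_x \mathbb{S}_N f(x,\cdot)$. Tailoring ``to the boundary pair realized by the minimizer'' is circular for the same reason.

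Even granting the single improving block, your sketch does not say how the per-interval gain is transferred to $\min_x\mathbb{S}_N f(x,\cdot)$. The paper's mechanism is to modify the \emph{$x$-side}: starting from a minimizer $x^{(k)}$ against the improved point, it splices into each improvement interval the word $w_{v_1,v_2}$ realizing $H_{a,b,v_1,v_2}(y_1)$ with matching boundary letters $v_1=x^{(k)}(a+Qi)$, $v_2=x^{(k)}(b+Qi)$ (legal because $X$ is a $1$-step shift of finite type), producing a competitor $\tilde x^{(k)}$ for the original point with $\mathbb{S}_{Qi+[a,b]}f(\tilde x^{(k)},y_0)=H_{a+Qi,b+Qi,v_1,v_2}(y_0)\le H_{a+Qi,b+Qi,v_1,v_2}(\tilde y_0)-\eta\le \mathbb{S}_{Qi+[a,b]}f(x^{(k)},\tilde y_0)-\eta$, and exact equality off the intervals since $f(T^jx,S^jy)=F(x(j),y(j))$; this is estimate (\ref{Gain of C in average}). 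Finally, your passage from the orbital gain on $y_0'$ to $\psi_f(\nu_1)>\psi_f(\nu)$ via a weak* limit $\nu_1$ of $\mathbb{A}_N\delta_{y_0'}$ needs a lower bound on $\psi_f(\nu_1)$ by $\liminf_N\frac1N\min_x\mathbb{S}_Nf(x,y_0')$, which is not automatic for a general limit measure; the paper sidesteps this by replacing $\tilde y_0$ with periodic approximations $y^{(k)}$ and invoking Lemma \ref{Decomposition for marginal preimage of closed orbit measures} to get $\psi_f\left(\nu^{(k)}\right)\ge \min_x\mathbb{A}_k f\left(x,y^{(k)}\right)$ directly. Your ergodic-decomposition reduction at the start is fine but unnecessary: the paper's argument works for arbitrary $\nu_0$ by applying the pointwise ergodic theorem to $\left(G_{f,C},\nu_0,S^{Q}\right)$.
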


\begin{proof}
	{Throughout this proof, take $D_X \in \mathbb{N}$ to be a constant such that if $x_1, x_2 \in X$, and $b_1, a_2 \in \mathbb{Z}, a_2 - b_1 \geq D_X$, then there exists $x' \in X$ such that
	\begin{align*}
		x'(j)	& = x_1 (j)	& \textrm{for $j \leq b_1$,} \\
		x'(j)	& = x_2 (j)	& \textrm{for $j \geq a_2$.}
	\end{align*}}
	Fix such a $D_X$ for the remainder of this proof. Take $D_Y$ to be an analogous constant for $(Y, S)$, and set $D = \max \{ D_X, D_Y \}$, noting that $D$ will witness the transitivities of $X, Y, X \times Y$.
	
	We'll prove the contrapositive of the result, i.e. that if $\operatorname{supp}(\nu_0) \setminus Y_{f, C} \neq \emptyset$, then $\psi_f (\nu_0) < \alpha(f)$.
	
	Suppose that $\operatorname{supp}(\nu_0) \setminus Y_{f, C} \neq \emptyset$. Set $G_{f, C} = \operatorname{supp}(\nu_0) \setminus Y_{f, C}$, noting that $\nu_0(G_{f, C}) > 0$ and that $S G_{f, C} = G_{f, C}, S Y_{f, C} = Y_{f, C}$.
	
	Consider $y_1 \in G_{f, C}$. Then there exist $a, b \in \mathbb{Z}, a \leq b$ for which there exists an $(f, C)$-improvement $y_1'$ on $y_1$ on $[a, b]$. Assume without loss of generality that $a \geq 1$, since otherwise we could consider $S^{-(a + 1)} y_1$ in place of $y_1$. Take $U \subseteq Y$ to be the cylinder
	\begin{align*}
		U	& = \left\{ y \in Y : y \vert_{a - 1}^{b + 1} = y_1 \vert_{a - 1}^{b + 1} \right\} ,
	\end{align*}
	and fix an $(f, C)$-improvement $y_1'$ of $y_1$ on $[a, b]$, and set
	\begin{align*}
	\eta	& = \min_{(v_1, v_2) \in P_{a, b}} \left( H_{a, b, v_1, v_2}\left( y_1' \right) - H_{a, b, v_1, v_2}\left( y_1' \right) \right) > C \geq 0 .
	\end{align*}
	
	Since $U$ is an open neighborhood of $y_1 \in \operatorname{supp}(\nu_0)$, it follows that $\nu_0(U) > 0$. Moreover, we can see that $U \subseteq G_{f, C}$, since any point in $U$ admits an $(f, C)$-improvement on $[a, b]$. By applying the pointwise ergodic theorem to the probability measure-preserving dynamical system $\left( G_{f, C} , \nu_0, S^{b - a + 3} \right)$, we can infer the limit $\lim_{k \to \infty} \frac{1}{k} \sum_{j = 0}^{k - 1} \chi_U \left( S^{(b - a + 3) j} y \right)$ exists for $\nu_0$-almost all $y \in G_{f, C}$, and
	\begin{align*}
		\int \lim_{k \to \infty} \frac{1}{k} \sum_{j = 0}^{k - 1} \chi_U \left( S^{(b - a + 3) j} y \right) \mathrm{d} \nu_0(y)	& = \nu_0(U) > 0.
	\end{align*}
	Set $Q = b - a + 3$. Fix some $y_0 \in G_{f, C}$ such that $\lim_{k \to \infty} \frac{1}{k} \sum_{j = 0}^{k - 1} \chi_U \left( S^{Q j} y_0 \right) \geq \nu_0(U)$. We denote this limit by
	$$
	\sigma = \lim_{k \to \infty} \frac{1}{k} \sum_{j = 0}^{k - 1} \chi_U \left( S^{Q j} y_0 \right) \geq \nu_0(U) > 0 .
	$$
	Set
	$$I = \left\{ i \in \mathbb{N} \cup \{0\} : S^{Q i} y_0 \in U \right\} ,$$
	noting that $\frac{\left|Q I \cap [0, k - 1] \right|}{k} = \frac{\left|I \cap \left[0, \frac{k - 1}{Q} \right] \right|}{k} \to \sigma / Q > 0$.
	
	Because $Q > b - a + 2$, we can ensure that the intervals $\left\{ [a - 1, b + 1] + Q i : i \in I \right\}$ are separated enough that we can perform manipulations on them independently of each other. Construct a point $\tilde{y}_0 \in Y$ by
	\begin{align*}
		\tilde{y}_0 (j)	& = \begin{cases}
			y_1' (j - Q i)	& \textrm{if $i \in I,j \in [a - 1, b + 1] + Q i  $,} \\
			y_0 (j)	& \textrm{otherwise.}
		\end{cases}
	\end{align*}
	In other words, $\tilde{y}_0$ is made by taking $y_0$ and replacing every instance of the block $y_1 \vert_a^b$ with the block $y_1' \vert_a^b$, yielding an $(f, C)$-improvement where the replacement was made. We can ensure that $\tilde{y}_0 \in Y$ because $Y$ is a $1$-step shift of finite type, and
	\begin{align*}
	y_0(Q i + a - 1)	& = y_1'(a - 1) ,	& y_0 (Q i + b + 1)	& = y_1'(b + 1)	& \textrm{for $i \in I$.}
	\end{align*}
	
	Before proceeding with the proof, we offer a brief, informal sketch of our argument. The spirit of our argument is that $\tilde{y}_0$ is an $(f, C)$-improvement of $y_0$ on intervals of the form $[a, b] + Q i$ for $i \in I$, and moreover that $I$ has positive density. Consequently, if we were to compare $\lim_{k \to \infty} \mathbb{A}_k \delta_{\tilde{y}_0}$ (if the limit exists) to $\lim_{k \to \infty} \mathbb{A}_k \delta_{y_0}$, we would see that these improvements are made with positive ``frequency" (corresponding to the density of $Q I$), and thus that these improvements yield a strict inequality
	$$\psi_f(\nu_0) < \psi_f \left( \lim_{k \to \infty} \mathbb{A}_k \delta_{ \tilde{y}_0} \right).$$
	Note that our argument is slightly more nuanced than what we have just sketched, since instead of comparing $\nu$ to $\lim_{k \to \infty} \mathbb{A}_k \delta_{\tilde{y}_0}$ -a limit which needn't exist- we compare $\nu_0$ to a sequence of finitely supported, $S$-invariant approximations $\nu^{(k)} \in \mathcal{M}_S^{co}(Y)$ of $\mathbb{A}_k \delta_{\tilde{y}_0}$.
	
	For $k \in \mathbb{N}$, take $y^{(k)} \in Y$ to be a point of period $k \in \mathbb{N}$ such that
	\begin{align*}
		y^{(k)} \vert_0^{k - D}	& = \tilde{y}_0 \vert_0^{k - D} ,	& S^k y^{(k)}	& = y^{(k)} ,
	\end{align*}
	and set $\nu^{(k)} = \mathbb{A}_k \delta_{y^{(k)}} \in \mathcal{M}_S^{co}(Y)$.
	
	For each $k \in \mathbb{N}$, take $\lambda^{(k)} \in \mathcal{M}_{T \times S} (X \times Y) \cap (\pi_Y)_*^{-1} \left\{ \nu^{(k)} \right\}$ such that
	\begin{align*}
		\int f \mathrm{d} \lambda^{(k)} 	& = \psi_f \left( \nu^{(k)} \right) .
	\end{align*}
	Our goal is to show that $\psi_f \left( \nu^{(k)} \right) > \psi_f(\nu_0)$ for sufficiently large $k \in \mathbb{N}$.
	
	We know by Lemma \ref{Decomposition for marginal preimage of closed orbit measures} that $\int f \mathrm{d} \lambda^{(k)} \geq \min_{x \in X} \mathbb{A}_k f \left( x , y^{(k)} \right)$. For each $k \in \mathbb{N}$, choose $x^{(k)} \in X$ such that
	$$\min_{x \in X} \mathbb{A}_k f \left( x , y^{(k)} \right) = \mathbb{A}_k f \left( x^{(k)} , y^{(k)} \right) .$$
	We can observe here that for any $x \in X$, we have that
	\begin{align*}
		\mathbb{A}_k f \left( x, \tilde{y}_0 \right) =	& \left[ \frac{1}{k} \sum_{j = 0}^{k - D} f \left( T^j x, S^j \tilde{y}_0 \right) \right] + \left[ \frac{1}{k} \sum_{j = k - D + 1}^{k - 1} f \left( T^j x , S^j \tilde{y}_0 \right) \right] \\
		=	& \left[ \frac{1}{k} \sum_{j = 0}^{k - D} f \left( T^j x, S^j y^{(k)} \right) \right] + \left[ \frac{1}{k} \sum_{j = k - D + 1}^{k - 1} f \left( T^j x , S^j \tilde{y}_0 \right) \right] \\
		=	& \left[ \mathbb{A}_k f \left( x, y^{(k)} \right) \right] - \left[ \frac{1}{k} \sum_{j = k - D}^{k - 1} f \left( T^j x , S^j y^{(k)} \right) \right] + \left[ \frac{1}{k} \sum_{j = k - D + 1}^{k - 1} f \left( T^j x , S^j \tilde{y}_0 \right) \right] .
	\end{align*}
	Therefore
	\begin{align*}
	\addtocounter{equation}{1}\tag{\theequation}\label{Comparing minima for periodic approximation}	\left| \mathbb{A}_k f \left( x, \tilde{y}_0 \right) - \mathbb{A}_k f \left( x, y^{(k)} \right) \right|	& \leq \frac{2 (D - 1)}{k} \|f\|	& \textrm{for all $x \in X$.}
	\end{align*}
	
	Our goal is to compare the quantities $\min_{x \in X} \mathbb{A}_k f \left( x, y^{(k)} \right), \min_{x \in X} \mathbb{A}_k f \left( x, y_0 \right)$. Choose $x_k \in X$ for each $k \in \mathbb{N}$ such that
	$$\min_{x \in X} \mathbb{A}_k f \left( x, y_0 \right) = \mathbb{A}_k f \left( x_k, y_0 \right) ,$$
	and consider any limit point $\lambda = \lim_{n \to \infty} \mathbb{A}_{k_n} \delta_{\left( x_{k_n}, y_0 \right)}$ and observe that $\lambda \in \mathcal{M}_{T \times S}(X \times Y) \cap (\pi_Y)_*^{-1} \{\nu_0\}$. In particular, this implies that
	\begin{align*}
		\psi_f(\nu_0)	& \leq \liminf_{k \to \infty} \min_{x \in X} \mathbb{A}_k f \left( x, y_0 \right). \addtocounter{equation}{1}\tag{\theequation}\label{Estimate for minimizer of nu_0}
	\end{align*}
	
	Fix $k \in \mathbb{N}$. For each pair $(v_1, v_2) \in P_{a, b}$, take $w_{v_1, v_2} \in X$ such that
	\begin{align*}
	H_{a, b, v_1, v_2}(y_1)	& = \mathbb{S}_{[a, b]} f \left( w_{v_1, v_2}, y_1 \right) ,	& w_{v_1, v_2} (a)	& = v_1 ,	& w_{v_1, v_2} (b)	& = v_2 .
	\end{align*}
	Consider $\tilde{x}^{(k)} \in X$ defined as follows:
	\begin{align*}
		\tilde{x}^{(k)}(j)	& = \begin{cases}
			w_{x^{(k)}(a + Q i), x^{(k)}(b + Q i)}(j - Q i)	& \textrm{if $i \in I, j \in [a, b] + Q i$,} \\
			x^{(k)}(j)	& \textrm{otherwise.}
		\end{cases}
	\end{align*}
	That $\tilde{x}^{(k)}$ is in $X$ follows from the fact that $X$ is $1$-step. We can observe further that
	\begin{align*}
		& \mathbb{A}_k \left[ f \left( x^{(k)} , \tilde{y}_0 \right) - f \left( \tilde{x}^{(k)} , y_0 \right) \right] \\
		=	& \frac{1}{k} \sum_{i \in I , Q i \in [0, k - 1]} \left[ \left( \mathbb{S}_{Q i + [a, b]} f \left( x^{(k)} , \tilde{y}_0 \right) \right) - H_{a + Q i, b + Q i, x^{(k)} (a + Q i), x^{(k)} (b + Q i)} (y_0) \right] \\
		\geq	& \frac{1}{k} \sum_{i \in I , Q i \in [0, k - 1]} \left[  H_{a + Q i, b + Q i, x^{(k)}(a + Q i), x^{(k)}(b + Q i)} \left( \tilde{y}_0 \right) - H_{a + Q i, b + Q i, x^{(k)} (a + Q i), x^{(k)} (b + Q i)} (y_0) \right] \\
		=	& \frac{1}{k} \sum_{i \in I , Q i \in [0, k - 1]} \left[  H_{a, b, x^{(k)}(a + Q i), x^{(k)}(b + Q i)} \left( y_1' \right) - H_{a, b, x^{(k)} (a + Q i), x^{(k)} (b + Q i)} (y_1) \right] \\
		\geq	& \frac{1}{k} \sum_{i \in I , Q i \in [0, k - 1]} \eta \\
		=	& \frac{|Q I \cap [0, k - 1]|}{k} \eta .
	\end{align*}
	Therefore
	\begin{align*}
	\addtocounter{equation}{1}\tag{\theequation}\label{Gain of C in average} \mathbb{A}_k f \left( \tilde{x}^{(k)} , y_0 \right)	& \leq - \frac{\left| Q I \cap [0, k - 1] \right|}{k} \eta + \mathbb{A}_k f \left( x^{(k)} , \tilde{y}_0 \right) .
	\end{align*}
	Consider now the following estimates for $k \geq Q$:
	\begin{align*}
		\min_{x \in X} \mathbb{A}_k f \left( x , y_0 \right)
		\leq	& \mathbb{A}_k f \left( \tilde{x}^{(k)} , y_0 \right) \\
		\textrm{[Estimate (\ref{Gain of C in average})]} \leq	& - \frac{|Q I \cap [0, k - 1]|}{k} \eta + \mathbb{A}_k f \left( x^{(k)} , \tilde{y}_0 \right) \\
		\textrm{[Estimate (\ref{Comparing minima for periodic approximation})]} \leq	& - \frac{|Q I \cap [0, k - 1]|}{k} \eta + \left[ \mathbb{A}_k f \left( x^{(k)} , y^{(k)} \right) \right] + \frac{2 (D - 1)}{k} \|f\| \\
		=	& - \frac{|Q I \cap [0, k - 1]|}{k} \eta + \left[ \min_{x \in X} \mathbb{A}_k f \left( x , y^{(k)} \right) \right] + \frac{2 (D - 1)}{k} \|f\| \\
		\textrm{[Lemma \ref{Decomposition for marginal preimage of closed orbit measures}]}\leq	& \frac{2 (D - 1)}{k} \|f\| - \frac{|Q I \cap [0, k - 1]|}{k} \eta + \psi_f \left( \nu^{(k)} \right) .
	\end{align*}
	For convenience, write $r_k = \frac{|Q I \cap [0, k - 1]|}{k} \eta - \frac{2 (D - 1)}{k} \|f\|$, noting that $r_k \stackrel{k \to \infty}{\to} \frac{\sigma \eta}{Q}$. Take $0 < \epsilon_0 < \frac{\sigma \eta}{2 Q}$, and choose $K \in \mathbb{N}$ such that
	\begin{align*}
		\liminf_{k \to \infty} \min_{x \in X} \mathbb{A}_k f \left( x , y_0 \right)	& \leq \min_{x \in X} \mathbb{A}_K f \left( x , y_0 \right) + \epsilon_0 , \\
		\frac{\sigma \eta}{Q} - \epsilon_0	& < r_K .
	\end{align*}
	Then
	\begin{align*}
		\psi_f(\nu_0) \leq	& \liminf_{k \to \infty} \min_{x \in X} \mathbb{A}_k f \left( x , y_0 \right)	& \textrm{[Estimate (\ref{Estimate for minimizer of nu_0})]} \\
		\leq	& \left( \min_{x \in X} \mathbb{A}_K f \left( x , y_0 \right) \right) + \epsilon_0 \\
		\leq	& \psi_f \left( \nu^{(K)} \right) - r_K + \epsilon_0 \\
		<	& \psi_f \left( \nu^{(K)} \right) - \left( \frac{\sigma \eta}{Q} - \epsilon_0 \right) + \epsilon_0 \\
		=	& \psi_f \left( \nu^{(K)} \right) - \left( \frac{\sigma \eta}{Q} - 2 \epsilon_0 \right) \\
		<	& \psi_f \left( \nu^{(K)} \right) \\
		\leq	& \alpha(f) .
	\end{align*}
Thus $\psi_f(\nu_0) < \alpha(f)$.
\end{proof}

\begin{Cor}\label{Ground states nonempty}
Consider transitive shifts of finite type $(X, T), (Y, S)$ over a finite alphabet $\mathcal{A}$, and a locally constant function $f \in C(X \times Y)$. Then $Y_{f, C} \neq \emptyset$ for all $C \geq 0$.
\end{Cor}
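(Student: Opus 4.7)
The plan is to derive this non-emptiness claim directly from Theorem \ref{Theta map} combined with Proposition \ref{Optimizing measures supported on ground states}. Via the sliding block code reduction described at the start of Section \ref{Locally constant secction}, we may assume without loss of generality that $(X, T), (Y, S)$ are $1$-step shifts of finite type over a common alphabet $\mathcal{A}$ and that $f(x, y) = F(x(0), y(0))$ for some $F : \mathcal{A}^2 \to \mathbb{R}$, so that the full hypotheses of Proposition \ref{Optimizing measures supported on ground states} are available.

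First I would apply Theorem \ref{Theta map}, whose hypotheses only require that $(X, T)$ be a transitive shift of finite type and $(Y, S)$ be a shift; this produces an ergodic $\alpha$-maximizing measure $\nu_0 \in \partial_e \mathcal{M}_S(Y)$, in other words one satisfying $\psi_f(\nu_0) = \alpha(f)$. Fix an arbitrary $C \geq 0$. Since both $(X, T)$ and $(Y, S)$ are now transitive $1$-step shifts of finite type and $f$ has the required form, I may invoke Proposition \ref{Optimizing measures supported on ground states} to conclude that $\nu_0$ is supported on $Y_{f, C}$, i.e., $\nu_0(Y_{f, C}) = 1$.

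Because $\nu_0$ is a Borel probability measure, the equality $\nu_0(Y_{f, C}) = 1 > 0$ immediately forces $Y_{f, C} \neq \emptyset$, completing the proof for every $C \geq 0$.

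The only conceptual point that warrants checking, rather than a serious obstacle, is the absence of circular reasoning with Proposition \ref{Y_{f, C} closed}, which postponed its own non-emptiness assertion to the present corollary. Inspecting the proofs of Theorem \ref{Theta map} and Proposition \ref{Optimizing measures supported on ground states}, neither appeals to $Y_{f, C}$ being non-empty, nor to the closedness or shift-invariance statements of Proposition \ref{Y_{f, C} closed}; they use only the definition of $Y_{f, C}$ together with the Hamiltonian-type quantities $H_{a, b, v_1, v_2}$ and the $(f, C)$-improvement relation. Hence the chain of dependencies is well-founded, and this corollary legitimately supplies the missing non-emptiness in Proposition \ref{Y_{f, C} closed}.
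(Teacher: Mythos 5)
Your proposal is correct and follows exactly the paper's own argument: apply Theorem \ref{Theta map} to obtain an $\alpha$-maximizing measure $\nu_0$, then apply Proposition \ref{Optimizing measures supported on ground states} to conclude $\operatorname{supp}(\nu_0) \subseteq Y_{f,C}$, which forces $Y_{f,C} \neq \emptyset$. The additional remarks on the sliding block code reduction and the absence of circularity with Proposition \ref{Y_{f, C} closed} are sound but not part of the paper's (more terse) proof.
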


\begin{proof}
By Theorem \ref{Theta map}, there exists $\nu_0 \in \mathcal{M}_S(Y)$ which is $\alpha$-maximizing. It follows from Proposition \ref{Optimizing measures supported on ground states} that $\operatorname{supp}(\nu_0) \subseteq Y_{f, C}$ for all $C \geq 0$.
\end{proof}

\begin{Cor}\label{Subordination principle}[Subordination principle]
Consider transitive shifts of finite type $(X, T), (Y, S)$, and a locally constant function $f \in C(X \times Y)$. Consider further $\nu_0, \nu_1 \in \mathcal{M}_S(Y)$ such that $\nu_0$ is $\alpha$-maximizing, and $\operatorname{supp} (\nu_1) \subseteq \operatorname{supp} (\nu_0)$. Then $\nu_1$ is $\alpha$-maximizing.
\end{Cor}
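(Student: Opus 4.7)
The plan is to chain together the two structural results about $Y_{f, C}$ that precede this corollary, namely Propositions \ref{Measures supported on ground states are optimizing} and \ref{Optimizing measures supported on ground states}, using the set $Y_{f, C}$ as an intermediary between $\nu_0$ and $\nu_1$. The observation driving the argument is that the condition ``$\nu$ is $\alpha$-maximizing'' has been shown to be equivalent to the containment $\operatorname{supp}(\nu) \subseteq Y_{f, C}$ for every $C \geq 0$, and this latter condition is manifestly inherited by any invariant measure whose support is contained in $\operatorname{supp}(\nu_0)$.

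First, I would fix an arbitrary $C \geq 0$ and apply Proposition \ref{Optimizing measures supported on ground states} to conclude that $\nu_0$ is supported on $Y_{f, C}$, i.e. $\nu_0(Y_{f, C}) = 1$. Since $Y_{f, C}$ is closed by Proposition \ref{Y_{f, C} closed}, this gives the inclusion $\operatorname{supp}(\nu_0) \subseteq Y_{f, C}$. Combining this with the hypothesis $\operatorname{supp}(\nu_1) \subseteq \operatorname{supp}(\nu_0)$ yields $\operatorname{supp}(\nu_1) \subseteq Y_{f, C}$, and hence $\nu_1(Y_{f, C}) = 1$, so that $\nu_1$ itself is supported on $Y_{f, C}$.

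Finally, applying Proposition \ref{Measures supported on ground states are optimizing} (with any $C \geq 0$, for instance $C = 0$) to $\nu_1$ gives that $\nu_1$ is $\alpha$-maximizing, completing the proof. There is no genuine obstacle here: once one recognizes that the two preceding propositions together characterize $\alpha$-maximality via support in $Y_{f, C}$, the subordination principle reduces to the trivial observation that this support condition is monotone with respect to inclusion of supports.
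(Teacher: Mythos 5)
Your proposal is correct and is essentially identical to the paper's own proof: apply Proposition \ref{Optimizing measures supported on ground states} to get $\operatorname{supp}(\nu_1) \subseteq \operatorname{supp}(\nu_0) \subseteq Y_{f, C}$, then apply Proposition \ref{Measures supported on ground states are optimizing} to $\nu_1$. The only (immaterial) difference is your choice of $C$; the paper fixes $C = 1031$ where you leave it arbitrary.
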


\begin{proof}
Proposition \ref{Optimizing measures supported on ground states} implies that $\operatorname{supp}(\nu_1) \subseteq \operatorname{supp}(\nu_0) \subseteq Y_{f, 1031}$, and Proposition \ref{Measures supported on ground states are optimizing} then implies that $\psi_f(\nu_1) = \alpha(f)$.
\end{proof}

\begin{Prop}\label{Periodic optimization}
Consider transitive $1$-step shifts of finite type $(X, T), (Y, S)$ over a finite alphabet $\mathcal{A}$, and a locally constant function $f \in C(X \times Y)$ for which there exists $F : \mathcal{A}^2 \to \mathbb{R}$ such that
\begin{align*}
	f(x, y)	& = F \left( x (0) , y (0) \right)	& \textrm{for all $(x, y) \in X \times Y$.}
\end{align*}
For each $C \geq 0$, the following conditions are equivalent:
\begin{enumerate}
	\item There exists $\nu_0 \in \mathcal{M}_S^{co}(Y)$ which is $\alpha$-maximizing.
	\item $Y_{f, C}$ contains a periodic point.
\end{enumerate}
\end{Prop}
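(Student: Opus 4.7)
The plan is to prove this equivalence as a nearly immediate corollary of the two preceding propositions, which together identify the $\alpha$-maximizing measures with exactly those $S$-invariant measures whose support lies in $Y_{f,C}$. The equivalence should come out by matching periodic orbits on one side with measures supported on finite orbits on the other.

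For the direction $(2) \Rightarrow (1)$, I would take a periodic point $y_0 \in Y_{f,C} \cap \operatorname{Per}_S^k(Y)$ and form $\nu_0 = \mathbb{A}_k \delta_{y_0} \in \mathcal{M}_S^{co}(Y)$. Since $Y_{f,C}$ is a shift (by Proposition \ref{Y_{f, C} closed}), in particular shift-invariant, the entire orbit $\{y_0, Sy_0, \ldots, S^{k-1}y_0\} = \operatorname{supp}(\nu_0)$ lies in $Y_{f,C}$, so $\nu_0(Y_{f,C}) = 1$. Proposition \ref{Measures supported on ground states are optimizing} then yields $\psi_f(\nu_0) = \alpha(f)$, i.e. $\nu_0$ is $\alpha$-maximizing.

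For the direction $(1) \Rightarrow (2)$, suppose $\nu_0 \in \mathcal{M}_S^{co}(Y)$ is $\alpha$-maximizing, so $\nu_0 = \mathbb{A}_k \delta_{y_0}$ for some $y_0 \in \operatorname{Per}_S^k(Y)$ and $k \in \mathbb{N}$. By Proposition \ref{Optimizing measures supported on ground states}, $\operatorname{supp}(\nu_0) \subseteq Y_{f,C}$ (for any $C \geq 0$). Since $y_0 \in \operatorname{supp}(\nu_0)$, this gives a periodic point $y_0 \in Y_{f,C}$.

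Both directions are one-line consequences of the already-established propositions once one observes that the support of a measure in $\mathcal{M}_S^{co}(Y)$ is exactly a finite periodic orbit, so I anticipate no obstacle. The only minor care needed is to record that the statement holds for each fixed $C \geq 0$ (rather than only, say, $C = 0$), but this is automatic since Propositions \ref{Measures supported on ground states are optimizing} and \ref{Optimizing measures supported on ground states} are both stated uniformly in $C \geq 0$.
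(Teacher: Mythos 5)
Your proposal is correct and matches the paper's proof essentially verbatim: both directions are obtained by applying Proposition \ref{Optimizing measures supported on ground states} for $(1)\Rightarrow(2)$ and Proposition \ref{Measures supported on ground states are optimizing} for $(2)\Rightarrow(1)$, using the identification of measures in $\mathcal{M}_S^{co}(Y)$ with periodic orbits. No further comment is needed.
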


\begin{proof}
(1)$\Rightarrow$(2): If $\nu_0 \in \mathcal{M}_S^{co}(Y)$ is such that $\psi_f(\nu_0) = \alpha(f)$, then Proposition \ref{Optimizing measures supported on ground states} implies that $\operatorname{supp}(\nu_0) \subseteq Y_{f, C}$, and $\operatorname{supp}(\nu_0)$ is the orbit of a periodic point in $Y_{f, C}$.

(2)$\Rightarrow$(1): If $Y_{f, C}$ contains a periodic point $y_0$, then $\nu_0 = \lim_{k \to \infty} \mathbb{A}_k \delta_{y_0}$ is supported on the orbit of $y_0$, and thus $\operatorname{supp}(\nu_0) \subseteq Y_{f, C}$. It then follows from Proposition \ref{Measures supported on ground states are optimizing} that $\psi_f(\nu_0) = \alpha(f)$.
\end{proof}

\subsection{The case where $(X, T)$ is trivial}\label{Locally constant functions, classical case, subsection}

Consider now the case where $X = \{\star\}$ is a shift on $1$ element, which we denote by $\star$, so that $(X, T)$ is the $1$-point topological dynamical system. We can note that $X$ is a transitive $1$-step shift of finite type. In this context, we can write $H_{a, b, v_1, v_2}(y) = \mathbb{S}_{[a, b]} f \left( \star, y \right)$, and characterize $Y_{f, C}$ for $C \geq 0$ as
\begin{align*}
Y_{f, C}	& = \bigcap_{a = - \infty}^{+ \infty} \bigcap_{b = a}^{+ \infty} \left\{ y \in Y : \mathbb{S}_{[a, b]} f \left( \star , y \right) \geq \max_{\substack{y' \in Y \\ y' \vert_{\mathbb{Z} \setminus [a, b]} = y \vert_{\mathbb{Z} \setminus [a, b]}}} \mathbb{S}_{[a, b]} f \left( \star, y' \right) - C \right\} .
\end{align*}
In other words, the elements of $Y_{f, C}$ are the set of points $y \in Y$ such that if $y' \in Y$ and $\left| \left\{ j \in \mathbb{Z} : y(j) \neq y'(j) \right\} \right| < \infty$, then $\mathbb{S}_{\mathbb{Z}} \left( f \left( \star, y \right) - f \left( \star, y' \right) \right) \geq - C$. For $C = 0$, these points can be understood as ground-state configurations (c.f. \cite[Definition 2.3]{GroundStates}).

In addition, when $(X, T) = \left( \star, \operatorname{id}_{\{\star\}} \right)$, these $Y_{f, 0}$ shifts are topological Markov fields \cite[Definition 3.1]{MarkovFields}, and therefore sofic (c.f. \cite[Proposition 3.5]{MarkovFields}). In particular, this means that $Y_{f, 0}$ contains periodic points, and by extension, so does $Y_{f, C}$ for all $C \geq 0$.

Even in this setting where $(X, T)$ is trivial, the shift $Y_{f, C}$ is in general \emph{not} of finite type. The following example was communicated to us by A. Hru\u{s}kov\'{a} \cite{Aranka}. Consider the $1$-step shift of finite type $Y$ defined as follows. Let the finite digraph $\mathcal{G} = (\mathcal{V}, \mathcal{E})$ with edge weight $F : \mathcal{E} \to \mathbb{R}$ be as illustrated below:
$$
\begin{tikzpicture}[node distance={30mm}, main/.style = {draw, circle}]
	\node[main] (1) {$\mathbf{A}$};
	\node[main] (2) [below left of =1] {$\mathbf{B}$};
	\node[main] (3) [below right of =1] {$\mathbf{C}$};
	\node[main] (4) [below left of =3] {$\mathbf{D}$};
	\draw[->] (1) to [out=210, in=60] node[midway, above left] {$-1$} (2);
	\draw[->] (2) to [out=30, in=240] node[midway, below right] {$-1$} (1);
	\draw[->] (1) to [out=300, in=150] node[midway, below left] {$-1$} (3);
	\draw[->] (3) to [out=120, in=330] node[midway, above right] {$-1$} (1);
	\draw[->] (3) to [out=210, in=60] node[midway, above left] {$-2$} (4);
	\draw[->] (4) to [out=30, in=240] node[midway, below right]{$-2$} (3);
	\draw[->] (2) to [out=300, in=150] node[midway, below left] {$-3$} (4);
	\draw[->] (4) to [out=120, in=330] node[midway, above right] {$-3$} (2);
	\draw[->] (1) to [out=45, in=135, looseness=5] node[midway, above] {$0$} (1);
	\draw[->] (2) to [out=135, in=225, looseness=5] node[midway, left] {$0$} (2);
	\draw[->] (3) to [out=315, in=45, looseness=5] node[midway, right] {$0$} (3);
	\draw[->] (4) to [out=225, in=315, looseness=5] node[midway, below] {$0$} (4);
\end{tikzpicture}
$$
This digraph induces a $1$-step shift of finite type $Y$ on the alphabet $\mathcal{A} = \mathcal{E}$, where $Y \subseteq \mathcal{A}^\mathbb{Z}$ is the space of all bi-infinite walks $(e(j))_{j \in \mathbb{Z}} \in \mathcal{E}^\mathbb{Z}$ through $\mathcal{G}$. This constitutes a $1$-step shift of finite type $(Y, S)$ (c.f. \cite[Proposition 2.2.6]{LindMarcus}), and moreover, it can be verified that the shift is transitive. Consider the function $f : X \times Y \to \mathbb{R}$ given by $f(x, y) = F(y(0))$.

Fix $C \in [0, 1)$, and assume for contradiction that $Y_{f, C}$ is an $M$-step shift of finite type for some $M \in \mathbb{N}$. Consider the point $y_M \in Y$ defined as follows:
\begin{align*}
y_M(j)	& = \begin{cases}
\mathbf{AA}	& \textrm{if $j \leq -1$,} \\
\mathbf{AB}	& \textrm{if $j = 0$,} \\
\mathbf{BB}	& \textrm{if $1 \leq j \leq M$,} \\
\mathbf{BD}	& \textrm{if $j = M + 1$,} \\
\mathbf{DD}	& \textrm{if $j \geq M + 2$.}
\end{cases}
\end{align*}
We observe firstly that $y_M$ admits an $(f, C)$-improvement on $[0, M + 2]$. Consider the point $y_M' \in Y$ given by
\begin{align*}
y_M'(j)	& = \begin{cases}
\mathbf{AA}	& \textrm{if $j \leq -1$,} \\
\mathbf{AC}	& \textrm{if $j = 0$,} \\
\mathbf{CC}	& \textrm{if $1 \leq j \leq M$,} \\
\mathbf{CD}	& \textrm{if $j = M + 1$,} \\
\mathbf{DD}	& \textrm{if $j \geq M + 2$.}
\end{cases}
\end{align*}
Then $H_{0, M + 2, v_1, v_2} (y_M) = - 4, H_{0, M + 2, v_1, v_2} \left( y_M ' \right) = - 3$. Therefore $y_M \not \in Y_{f, C}$. However, the point $y_M$ contains no $Y_{f, C}$-illegal words of length $M + 1$, since for every $h \in \mathbb{Z}$ exists $y \in Y_{f, C}$ such that $S^h y \vert_0^M = S^h y_M \vert_0^M$. Therefore $Y_{f, C}$ cannot be an $M$-step shift of finite type. Since our choice of $M \in \mathbb{N}$ was arbitrary, this means that $Y_{f, C}$ is not of finite type.

We conclude this section by proposing the following conjecture.

\begin{Conj}\label{Periodic optimization conjecture}
Take $(X, T), (Y, S)$ to be transitive shifts of finite type, and let $f \in C(X \times Y)$ be a locally constant function on $X \times Y$. Then there exists $\nu \in \mathcal{M}_S^{co}(Y)$ which is $\alpha$-maximizing.
\end{Conj}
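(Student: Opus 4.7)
The plan is to invoke Proposition \ref{Periodic optimization}, which reduces the conjecture to showing that $Y_{f, C}$ contains a periodic point for some (equivalently, every) $C \geq 0$.

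My approach would be to close up a generic orbit of an $\alpha$-maximizing measure into a periodic orbit and verify that this periodic orbit lies in $Y_{f, C}$ for an appropriate $C$. By Theorem \ref{Theta map} there exists an ergodic $\alpha$-maximizing measure $\nu_0 \in \partial_e \mathcal{M}_S(Y)$, and by Proposition \ref{Optimizing measures supported on ground states} we have $\operatorname{supp}(\nu_0) \subseteq Y_{f, 0}$. Choose a $\nu_0$-generic point $y_0 \in \operatorname{supp}(\nu_0) \subseteq Y_{f, 0}$. Using the periodic specification property of $(Y, S)$ (Lemma \ref{Transitive SFT dense periodic measures}) with specification constant $D$, construct periodic points $y^{(k)} \in \operatorname{Per}_S^{k + D}(Y)$ satisfying $y^{(k)}|_{[0, k - 1]} = y_0|_{[0, k - 1]}$.

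The crucial step is to show that $y^{(k)} \in Y_{f, C}$ for some $C = C(D, \|f\|)$ independent of $k$, for all sufficiently large $k$. For intervals $[a, b]$ whose one-step neighborhood $[a - 1, b + 1]$, translated into a single period modulo $k + D$, lies inside the good segment where $y^{(k)}$ agrees with $y_0$, any hypothetical $(f, C)$-improvement $y'$ of $y^{(k)}$ on $[a, b]$ would give, via the $1$-step property of $Y$ and boundary compatibility, an $(f, C)$-improvement of $y_0$, contradicting $y_0 \in Y_{f, 0} \subseteq Y_{f, C}$. For intervals whose neighborhood crosses a single length-$D$ glueing segment, a bound in the spirit of Lemma \ref{Minimum comparison estimate} controls the discrepancy between possible improvements of $y^{(k)}$ and of $y_0$ by $O(D \|f\|)$, so taking $C$ larger than this threshold disposes of these cases.

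The main obstacle I anticipate is the case of intervals $[a, b]$ whose length substantially exceeds the period $k + D$, so that $[a, b]$ covers many glueing segments; in this regime improvements could a priori accumulate across periods, and a bound independent of $k$ is not immediate. One plausible remedy is to decompose $[a, b]$ into complete periods and a residual, then to argue that the contribution of each complete period reduces to a min-mean comparison in the $X$-graph driven by $y^{(k)}$, controlled in magnitude by $\psi_f(\mathbb{A}_{k + D} \delta_{y^{(k)}}) - \psi_f(\nu_0)$, which vanishes as $k \to \infty$ by continuity of $\psi_f$ from Theorem \ref{Theta map}, while the residual is bounded by $O(D \|f\|)$. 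A more structural alternative would be to reduce the conjecture to classical ergodic optimization of a representative $g \in C(Y)$ of $\Theta(f) \in C(Y) / B_S(Y)$ via Theorem \ref{Theta map} and to exploit the classical fact that locally constant functions on transitive shifts of finite type admit periodic maximizers; the subtlety here is that the approximants $f^{(L)}$ in the proof of Theorem \ref{Theta map} are locally constant but converge to $\Theta(f)$ only modulo coboundaries, so an additional regularity or rigidity argument for $\Theta(f)$ would be needed to carry this reduction through.
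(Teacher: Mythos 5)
The statement you are proving is presented in the paper as Conjecture \ref{Periodic optimization conjecture}; the paper offers no proof of it, only the equivalence recorded in Proposition \ref{Periodic optimization} and, in the special case where $(X, T)$ is trivial, the observation that $Y_{f, 0}$ is a topological Markov field, hence sofic, hence contains periodic points. Your reduction via Proposition \ref{Periodic optimization} is exactly the intended framework, but your proposal does not close the conjecture, and the obstacle you flag in your final paragraph is fatal rather than technical. If $y^{(k)}$ is the periodic closure of $y_0 \vert_0^{k-1}$ and $\nu^{(k)} = \mathbb{A}_{k + D} \delta_{y^{(k)}}$, then generically $\psi_f \left( \nu^{(k)} \right) < \alpha(f)$ strictly, say by $\epsilon_k > 0$. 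On an interval $[a, b]$ covering $m$ full periods one can splice in $m$ periods of a better periodic orbit (or $m$ long segments generic for $\nu_0$ itself), gaining roughly $m (k + D) \epsilon_k - O(D \|f\|)$ in the $H$-functionals; this is unbounded in $m$ for each fixed $k$, so $y^{(k)} \notin Y_{f, C}$ for any finite $C$ unless $\psi_f \left( \nu^{(k)} \right) = \alpha(f)$ exactly. The fact that $\epsilon_k \to 0$ as $k \to \infty$ does not help, because membership in $Y_{f, C}$ requires a single $C$ controlling improvements on intervals of \emph{all} lengths for one fixed $k$. In other words, the only periodic points that can lie in $Y_{f, C}$ (for $C$ below the relevant threshold) are those whose orbit measures are already exactly $\alpha$-maximizing, which is precisely what the conjecture asserts exists; the argument is circular.

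Your alternative route through Theorem \ref{Theta map} has a gap you correctly identify but underestimate: $\Theta(f)$ lives in $C(Y) / B_S(Y)$ and is obtained as a limit of locally constant functions only in the quotient seminorm $p$, which sees nothing beyond integrals against invariant measures. Nothing in the construction yields a representative that is locally constant, H\"older, or Walters, and the classical theorem guaranteeing periodic maximizing measures applies to locally constant (or sufficiently regular) potentials, not to arbitrary continuous ones --- for merely continuous functions periodic optimization is only ``typical'' and can fail. Establishing enough regularity of the effective potential $\Theta(f)$ to run the classical argument is essentially the open content of the conjecture.
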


A related question to Conjecture \ref{Periodic optimization conjecture} is when we can ``algorithmically" describe a measure $\nu \in \mathcal{M}_S^{co}(Y)$ for which $\alpha(f) = \psi_f(\nu)$, i.e. when we can explicitly exhibit such a measure based on information about the shifts of finite type $X, Y$ and the locally constant function $f$. This is in turn related to a broader question of when $\alpha(f)$ can be computed explicitly, and will be part of a future work.

\bibliography{/Users/AJYou/OneDrive/Desktop/Research_stuff/Bibliography/Bibliography}

\end{document}